\title{Hilbert's third problem and a conjecture of Goncharov}
\author{Jonathan A. Campbell}
\address{Center for Communications Research, La Jolla}
\author{Inna Zakharevich}
\address{Cornell University} 
\begin{document}

\maketitle
\begin{abstract}
  In this paper we reduce the generalized Hilbert's third problem about Dehn
  invariants and scissors congruence classes to the injectivity of certain
  Cheeger--Chern--Simons invariants. We also establish a version of a conjecture of
  Goncharov relating scissors congruence groups of polytopes and the algebraic
  $K$-theory of $\mathbf{C}$. We prove, in particular, that the homology of the
  ``Dehn complex'' of Goncharov splits as a summand of the twisted homology of a
  Lie group made discrete.
\end{abstract}

%\tableofcontents

\section*{Introduction}

Hilbert's third problem asks the following question: given two polyhedra $P$ and
$Q$, when is it possible to decompose $P$ into finitely many polyhedra and form
$Q$ out of the pieces?  More formally, is it possible to write
$P = \bigcup_{i=1}^n P_i$ and $Q = \bigcup_{i=1}^n Q_i$ such that $P_i\cong Q_i$
for all $i$, and such that $\meas(P_i\cap P_j) = \meas(Q_i\cap Q_j) = 0$ for all
$i \neq j$?  (Two polyhedra for this this is true are called \textsl{scissors
  congruent}.)  The generalized version of Hilbert's third problem is the
observation that this can be asked in any dimension and any geometry.  The
question then becomes: describe a complete set of invariants of scissors
congruence classes of polytopes in a given dimension and geometry.

Let us briefly consider the classical version.  If two polyhedra are scissors
congruent then their volumes are equal.  The reverse implication is not true; a
second invariant, called the \textsl{Dehn invariant}, exists.  For
three-dimensional polyhedra (in Euclidean, spherical or hyperbolic space) this
invariant is defined as follows:
\[D(P) = \sum_{\substack{e\mathrm{\ edge\ of\ }P}} \mathrm{len}(e) \otimes
  \theta(e) \in \R\otimes \R/\pi\Z.\] Here, $\theta(e)$ is the dihedral angle at
$e$; in other words, it is the arc length of the intersection with $P$ of a
small circle around $e$.  In dimension $n$ it is possible to define other Dehn
invariants, by picking a dimension $\ell$ and writing a similar sum over all
faces of $P$ of dimension $\ell$; the measure of the angle will then be a
portion of the sphere in dimension $n-\ell-1$.  By the Dehn--Sydler theorem
\cite{sydler65, jessen68} in Euclidean space in dimensions $3$ and $4$, two
polytopes are scissors congruent if and only if their volumes and Dehn
invariants are equal.  Work of Dupont and Sah \cite{dupont_sah_ii} extended this
technique to$3$-dimensional spherical and hyperbolic space, classifying the
kernel of the Dehn invariant as a group homology group.  We thus have the
following question:

\begin{mainconjecture}[{\cite[Question 1]{dupont_sah_ii}}]
  In Euclidean, spherical, and hyperbolic geometries, do the volume and generalized
  Dehn invariant separate the scissors congruence classes of polytopes?
\end{mainconjecture}

\begin{rmknonum}
  Spherical polytopes are often used to measure angles.  When a polytope is
  decomposed into smaller polytopes it produces extra angles, at all of the
  faces along the cuts.  These newly produced angles always add up either to an
  entire sphere (if they are contained in the interior of the original polytope)
  or to some type of ``flat'' angle.  Such flatness can be quantified by
  observing that such angles always arise from an angle in a lower-dimensional
  sphere; in spherical scissors congruence classes we thus declare all such
  angles to have ``scissors congruence measure $0$.''  The Dehn invariant, and
  thus the generalized version of Hilbert's third problem, can also be defined
  for polytopes up to such ``measure-$0$'' polytopes.  See
  Definition~\ref{def:spherangle}. 
\end{rmknonum}

An algebraic approach to Hilbert's Third Problem defines \emph{scissors
  congruence groups}, which are free abelian groups generated by polytopes (in
whichever geometry is under consideration) modulo ``cutting'' and translation by
isometries.  (For a more formal definition, see Definition~\ref{def:scissors}.)
Both volume and the generalized Dehn invariant can then be defined as
homomorphisms of groups, and we see that the generalized Hilbert's Third Problem
has a positive answer exactly when volume is injective when restricted to the
kernel of the Dehn invariant.

Motivated by the theory of mixed Tate motives, in \cite[Conjecture
1.7]{goncharov} Goncharov proposed the following method for solving generalized
Hilbert's third problem.  Let
$D:\P(S^{2n-1}) \to \bigoplus_{i=1}^n \P(S^{2i-1})\otimes \P(S^{2(n-i)-1})$ be
the Dehn invariant on the (reduced) spherical scissors congruence groups.  The
generalized Hilbert's third problem can then be rephrased to say that volume is
injective when restricted to $\ker D$.  Conjectures of Ramakrishnan
\cite[Conjectures 7.1.2,7.1.8]{ramakrishnan} imply that the Borel regulator
produces an injective homomorphism
\[(\gr_n^\gamma K_{2n-1}(\CC)_\Q\otimes (\Q^\sigma)^{\otimes n})^+ \to
  \R/(2\pi)^n\Q.\] Here, $\Q^\sigma$ is $\Q$ with an action of $\Z/2$ via the
sign, $\gr_n^\gamma K_{2n-1}(\CC)$ is the $n$-th graded piece of the weight
filtration on the algebraic $K$-theory of $\CC$ (with action by $\Z/2$ induced
by complex conjugation), and $\cdot^+$ denotes taking the fixed points of the
action.  (For a more detailed explanation of this, see Section~\ref{sec:conj};
an in-depth understanding of the terms is not needed for the current
discussion.)  If it were possible to construct an injective map
\begin{equation} \label{eq:gonchmap}
  \ker D \otimes \Q \rcofib (\gr_n^\gamma K_{2n-1}(\CC)_\Q\otimes (\Q^\sigma)^{\otimes
    n})^+
\end{equation} 
such that the composition with the Borel regulator was equal to the volume, this
would imply generalized Hilbert's third problem for spherical scissors
congruence groups (at least modulo torsion).  Goncharov also made an analogous
form of this conjecture for the hyperbolic groups; here the Borel regulator
takes values in $\R$.  In his paper, Goncharov was able to construct a map of
the form (\ref{eq:gonchmap}) once polytopes were restricted to polytopes with
algebraic vertices and $\CC$ was replaced by $\bar \Q$; however, he did not
show that it is injective.

There are a couple of indications this initial version may not be the most
useful form of the conjecture.  Restricting to polytopes of a particular
dimension restricts us to considering group homology for matrices of a set
dimension; this is directly related to the \textsl{rank} filtration, rather than
the % The first is some computations in the $n=4$
% case, \cite{goncharov_rudenko}, which used the \textsl{rank} filtration,
% instead of the
$\gamma$-filtration.  As the graded pieces of the rank and $\gamma$-filtrations
are expected to be isomorphic (see, for example, \cite[Conjecture
2.6.1]{knudson_homology} for an in-depth discussion) this is not a major change
to the conjecture.  Since $K_*(\CC)_\Q$ is isomorphic to the primitive elements
in the Hopf algebra $H_*(BGL(\CC);\Q)$ (where $GL(\CC)$ is considered as a
discrete group), the desired map (\ref{eq:gonchmap}) can be described as a map
into a quotient of certain group homology groups.  The second is the observation
that scissors congruence groups are constructed out of the group homology of
orthogonal groups, rather than general linear groups, so it is more likely that
the kernel of the Dehn invariant will be related to the group homology of
orthogonal groups, rather than general linear groups.  It turns out that the
correct analog of the quotient in the orthogonal case is simply the groups
$H_*(O(n;\R);\Zinv^\tw)$, where $\cdot^\tw$ indicates that the group is acting
via multiplication by the determinant.  These also have a regulator, usually
referred to as the Cheeger--Chern--Simons class, which agrees with the Beilinson
(and thus Borel) regulator \cite{dupont_hain_zucker} and which is also expected
to be injective.

With these two changes we can prove the reverse of Goncharov's conjecture:
\begin{maintheorem}[Theorem~\ref{thm:hyperbolicGonch1.7}] \label{mainthm:a}
  Let $D$ be the Dehn invariant for hyperbolic scissors congruence.  There is a
  homomorphism
  \[H_d(O(1,d;\R);\Zinv^\sigma) \rto \ker D\]
  which, after composition with volume is equal to the Cheeger--Chern--Simons
  class.  Thus, if this map is surjective and the Cheeger--Chern--Simons class
  is injective, volume and the Dehn invariant separate scissors congruence
  classes in spherical geometry in all dimensions. 
\end{maintheorem}

An analogous statement is true for the spherical case, although it is somewhat
more complicated as volume is not well-defined on the reduced spherical scissors
congruence groups.  For more details, see Theorem~\ref{thm:sphericalGonch1.7}.

\begin{rmknonum}
  For unreduced spherical scissors congruence groups, it is already known that
  volume and Dehn invariant separate scissors congruence classes
  \cite[Proposition 6.3.22]{sah79}.  However, unreduced spherical scissors
  congruence groups do not appear to have nearly as many interesting
  applications as the reduced version, and we therefore focus on the reduced
  case.  In the reduced case, the even-dimensional reduced scissors congruence
  groups are known to be $0$ (see Proposition~\ref{prop:evencontr}), as is the
  homology group in the codomain, so the theorem is vacuous in these cases;
  however, we state it in full generality to make the analogy with the
  hyperbolic case clear.
\end{rmknonum}

Goncharov's intuition about scissors congruence classes did not stop at the
kernel of the Dehn invariant.  He noticed that Dehn invariants can be iterated
to produce a chain complex, denoted $\P_*(S^d)$.  (See Section~\ref{sec:goncharov} for more details.)  In \cite{goncharov}, he
conjectures \cite[Conjecture 1.8]{goncharov} that there exists a homomorphism
\[H_m(\P_*(S^{2n-1})\otimes \Q) \rto (\gr^\gamma_n K_{n+m}(\CC)_\Q\otimes
  (\Q^\sigma)^n)^+\]
for all $m$.  The techniques for proving Theorem~\ref{mainthm:a} extend to proving a
form of this conjecture, as well:
\begin{maintheorem}[Theorem~\ref{thm:projbase}] \label{mainthm:inj}
  Let $X = S^d$ or $\cH^d$, and let $I(X)$ be the isometry group of $X$.  
  For all $m$ there are homomorphims
  \[H_{m+\lfloor \frac{d-1}{2}\rfloor}(I(X),\Zinv^\sigma) \rto H_m(\P_*(X)).\]
\end{maintheorem}
In fact, Theorem~\ref{thm:projbase} is shown for any field of characteristic
$0$, not just $\R$; for the definition of scissors congruence groups over a
general field see Definition~\ref{def:gen-scissors}.

The main tool allowing us to prove these theorems is the ``geometrization'' of
the Dehn invariant: a topological model which is both \emph{rigid} and
\emph{equivariant with respect to the isometry group of our geometry}.  It is
rigid in the sense that the structural properties that we desire of the Dehn
invariant (described at the beginning of Section~\ref{sec:dehn}) already hold
for the topological spaces, without having to work ``up to homotopy'' or ``inside
homology groups.''  It is equivariant in the sense that the Dehn invariant is a
map of $I(X)$-spaces, rather than simply topological spaces.

The advantage of this construction is that the presence of higher homological
information in the coinvariant computations leads to major cancellations.  All
of the complexity of $\P_*(X)$ is contracted into $\Q^\tw$.  Here, the key
observation is that in a topological context homotopy coinvariants and the
``total complex'' that Goncharov uses to define $\P_\ast$ commute past one
another; thus the rigid and equivariant construction of the Dehn invariant above
can be used to explicitly determine the homotopy type of a space modeling this
complex.  We produce a spectral sequence whose lowest nonzero row is the complex
$\P_\ast(S^{2n-1})$ (resp. $\P_\ast(\cH^{2n-1})$); the cancellations allowing us
to identify the homotopy type of the ``total complex'' allows us to directly
relate this to the homology of $O(2n)$.

At the end of our analysis we illustrate the connection between our
reformulation and Goncharov's original conjectures (see
Proposition~\ref{cor:zigzags}). 

\subsection*{Outline of the Proof of Theorem~\ref{mainthm:inj}} The key
ingredient in this proof is the repeated use of the notions of homotopy cofiber
and homotopy coinvariants.

The Dehn complex is defined as the total complex of a cubical diagram in $\AbGp$
(Definition~\ref{def:dehncomplex}), each vertex of which is obtained by taking
coinvariants (i.e. $H_0$) of an action on a Steinberg module. The total complex
of a cube is the the same as the total homotopy cofiber taken in the category of
chain complexes (Example~\ref{example:cube}). Thus, to construct the Dehn
complex, one takes homology of a group, and then takes a homotopy colimit. This
order feels unnatural from the point of view of homotopy theory, as one should
first construct the space and then analyze its homology.

We begin by replacing each Steinberg module with a space
(Definition~\ref{def:FX}), and taking homotopy coinvariants of the group
action.  The key step is the construction of an equivariant Dehn invariant
(Definition~\ref{def:Di}), so that we can analyze is the total homotopy cofiber of
the original cubical diagram, prior to taking coinvariants.  We can then commute
the homotopy cofiber past the group action, and analyze them independently. We
denote this space $(\sY^X)_{hI(X)}$ (it is defined in Section~\ref{sec:large}).
Sections \ref{sec:geometry} and \ref{sec:dehn} are devoted the construction of
this cube.

In the homotopical analysis of $(\sY^X)_{hI(X)}$ a minor miracle occurs: the
space is weakly equivalent to the homotopy coinvariants of a sphere with $I(X)$
acting on it (almost) trivially. We offer two proofs of this fact in
Section~\ref{sec:twoproofs}. This allows for significant simplification of the
spectral sequences that compute its homology groups. The spectral sequences that
we use are known as the homotopy orbit spectral sequence
(Proposition~\ref{prop:hoss}) and the spectral sequence for the total homotopy
cofiber of a cube (Section~\ref{subsec:thocofib}).  $H_\ast ((\sY^X)_{hI(X)})$.
We can also use the homotopy orbit spectral sequence to compute
$H_\ast ((\sY^X)_{hI(X)})$ to obtain a shift of $H_\ast (I(X); \Q^\tw)$; this
implies Theorem~\ref{mainthm:inj}.  An analysis of the edge homomorphism of the
spectral sequence constructs the desired homomorphism in
Theorem~\ref{thm:projbase}; an explicit description of the homomorphism in the
case $\ast = n-1$ in Lemma~\ref{lem:edge-desc} implies Theorem~\ref{mainthm:a}.

\begin{rmknonum}
  In this paper we mostly focus on spherical and hyperbolic geometries, as well
  as work over $\R$ and $\CC$, as these were our main examples of interest.
  However, most of our techniques do not rely on either these choices of
  geometry or the choice of field.  In future work we hope to work out further
  implications of these approaches in other fields, geometries, and isometry
  groups. (The Euclidean case is an obvious candidate.)
\end{rmknonum}

\begin{rmknonum}
  What is especially striking about our approach is that most of the topological
  spaces we work with turn out to be homotopy-equivalent to bouquets of spheres.
  This means, in essence, that they are \emph{combinatorial} objects, rather
  than topological.  Despite this, the topological approach appears to produce
  significantly simpler proofs, and stronger results, than a purely algebraic
  one.
\end{rmknonum}

\subsection*{Organization}

In Section~\ref{sec:geometry} we introduce the basic objects of interest.
Although many of the objects and definitions are standard, several key
definitions (esp. RT-buildings) differ subtly from standard.  We have attempted
to highlight these differences in the exposition.  Section~\ref{sec:dehn}
introduces derived Dehn invariants and states that they agree with the classical
definitions; although the comparison between our objects and the classical
objects is interesting (and we believe a good introduction to simplicial
techniques) we postpone the direct comparison to Appendix~\ref{app:classical},
as it is technical and completely disjoint from the main thrust of the paper.
Section~\ref{sec:goncharov} recalls Goncharov's definition of the Dehn complex
and shows how to construct a ``geometrized'' model.  Section~\ref{sec:large} is
the main meat of the topological story: it introduces the key theorem
(Theorem~\ref{thm:reallycool}) which allows us to directly compare the homology
of the Dehn complex to the group homology of orthogonal groups.
Section~\ref{sec:conj} proves Theorems~\ref{mainthm:a} and \ref{mainthm:inj} and
explains the connection between Goncharov's original conjectures and the form in
which they arise in this paper.  Section~\ref{sec:twoproofs} proves
Theorem~\ref{thm:reallycool} and uses the proof to provide the computation for
the claim about volume in Theorem~\ref{mainthm:a}.  This section is largely
independent of much of the rest of the paper, dealing mostly with the structure
of RT-buildings.

\subsection*{Notation and conventions} \label{notation}

We work in the category of \textbf{pointed} topological spaces and simplicial
sets.  Thus homology is reduced, and all constructions on spaces are pointed.
In particular, homotopy $G$-coinvariants---denoted $\bullet_{hG}$---are taken in
a correctly-pointed manner, so that $*_{hG} \simeq *$ and
$(S^0)_{hG} \simeq (BG)_+$.  Here, $\bullet_+$ denotes adding a disjoint
basepoint, so that we can think of $S^0$ as $*_+$.  Note the difference with the
unpointed constructions: in the unpointed case $*_{hG} \simeq BG$. In general,
in order to translate from the unpointed case to the pointed case one adds a
disjoint basepoint and then works relative to that point.   All groups
in this paper are considered \textbf{discrete} unless explicitly stated
otherwise, so that $BG$ is always the Eilenberg--Mac Lane space $K(G,1)$.

Two spaces (resp. simplicial sets) are weakly equivalent if there exists a map
$f:X \to Y$ which is a bijection on connected components and such that the
induced maps $f_*: \pi_n(X,x) \to \pi_n(Y,f(x))$ are isomorphisms for all
choices of basepoint $x$ and all $n \geq 1$.  When two spaces (resp. simplicial
sets) $X$ and $Y$ are weakly equivalent, we denote this by $X \simeq Y$.  Two
simply-connected spaces are ``weakly equivalent after inverting $2$'' if there
exists a map $f: X \to Y$ such that the induced maps
$f_*: \pi_n(X,x)\otimes \Zinv \to \pi_n(Y,f(x))\otimes \Zinv$ are isomorphisms
for all $n \geq 2$.  We denote this by $X \simeq_{[2]} Y$.

The notation $X / Y$ will refer to the quotient of spaces (resp. simplicial
sets): the topological space (resp. simplicial set) given by collapsing all
points to $Y$ to a point.  The only exception to this notation will be the group
quotient $\R/\Z$ (and scalings thereof) and the group $\Z/2$.

We denote by $X^n$ either $n$-dimensional hyperbolic ($\mathcal{H}^n$) or
spherical ($S^n$) space.  In each case, we think of $X^n$ as sitting inside
$\R^{n+1}$, with subspaces being cut out by subspaces of $\R^n$ through the origin
(which intersect, respectively, the plane where $x_{n+1} = 1$, the hyperboloid
$-x_0^1+x_1^2+\cdots+x_n^2$, and the sphere in a nonempty set).  When the
dimensions is clear from context we write $X$ instead of $X^n$.

For any abelian group $A$, we write $A_\Q \defeq A\otimes \Q$.

The field $k$ is always assumed to have characteristic $0$.

\subsection*{Acknowledgements}

Both authors were supported in part by the IAS Summer Collaborators program.
The second author was supported in part by NSF CAREER-1846767.

The work in this paper is deeply indebted to Jean-Louis Cathelineau's work on
homological stability of orthogonal groups and scissors congruence
\cite{cathelineau_homology_stability,cathelineau_proj,cathelineau_bar}, which
inspired many of the approaches in this paper.

The authors would like to thank Agnes Beaudry, Sherry Gong, Richard Hain, Max
Karoubi, Cary Malkiewich, Daniil Rudenko, Jesse Silliman, Craig Westerland, and
Charles Weibel for their helpfulness and patience with our questions, especially
about regulators.  They would also like to thank Ilya Zakharevich for his
patience and willingness to read at least 4 drafts of this paper and provide
feedback which greatly improved its clarity and general exposition.

\section{RT-buildings and scissors congruence groups} \label{sec:geometry}

Our main goal in this section is to establish the basic definitions of the
objects we will be using, as many of these definitions are not (quite) standard.
Many small variations on these definitions exist in the literature (see, for
example, \cite[Chapter 2]{dupont}, \cite{cathelineau_proj}), leading to a
combinatorial explosion of choices.  In our experience only the current choices
lead to a consistent rigid derived theory.  We work over any infinite base field
$k$ of characteristic $0$.

\subsection{RT-buildings} 

\begin{definition}[Based on {\cite[Definition 1.0.3]{cathelineau_proj}}]
  A \emph{geometry over $k$}, $X$, is a vector space equipped with quadratic
  form $(E,q)$ over $k$, where $q$ is totally nondegenerate, together with its
  isometry group $I(X)$.  The \emph{dimension} of $X$ is $\dim E - 1$.  By
  definition, $I(X) = I(E)$: the subgroup of $GL(n+1;k)$ which preserves the
  quadratic form.

  When we wish to emphasize that a geometry $X$ has dimension $n$, we write it
  as $X^n$.
\end{definition}

\begin{definition} \label{def:neat}
  The \emph{neat geometries} are the \emph{spherical} geometry $S^n$, given by
  the quadratic form $x_0^2 + \cdots + x_n^2$, and the \emph{hyperbolic}
  geometry $\cH^n$, given by the quadratic form $-x_0^2 + x_1^2 + \cdots +
  x_n^2$.
  
  When it is not clear from context, we write $S^n_k$ or $\cH^n_k$ to emphasize
  that the geometries are over $k$.

  In later sections, we will often be considering maps of the form $X^n \rto X^a
  \redjoin S^b$.  A map of this sort states that we fix a type of geometry
  (spherical or hyperbolic), and both $X$'s are of this same type, of dimensions
  $n$ and $a$, respectively.
\end{definition}

\begin{definition}
  For a geometry $X = (E,q)$, where $q$ has signature $(n_-,n_+)$, a \emph{subspace}
  $U$ of $X$ is a subset of $\mathbb{P}(E)$ corresponding to a linear subspace
  $V$ of $E$ such that the restriction of $q$ to $V$ is totally nondegenerate
  and such that the signature $(m_-,m_+)$ of $q|_V$ has $m_- = n_-$.

  An \emph{angular-subspace} $U$ of $X$ is a linear subspace $V$ of $E$ such
  that the restriction of $q$ to $V$ is totally nondegenerate and such that the
  signature $(m_-,m_+)$ of $q|_V$ has $m_- = 0$.

  For any subspace or angular-subspace $U$ of $X$ we say that $U$ is
  \emph{represented} by $V$.  The \emph{dimension} of $U$ is $\dim V - 1$; if
  $\dim U = 0$ we refer to $E$ as a \emph{point} of $X$.

  When $k$ contains $\sqrt{-1}$, the condition on the signature is vacuous and
  subspaces and angular-subspaces are equivalent.
\end{definition}

\begin{remark}
  The condition on the signature may appear artificial, but it is necessary in
  order to model the types of subspaces in question.  A geometry $X$ of
  dimension $n$ can be considered to be sitting inside $k^{n+1}$ as a
  submanifold.  In the case when $k$ is not algebraically closed, a plane of
  dimension $m$ may not intersect this submanifold in a subspace of dimension
  $m-1$, as desired.  The condition on the signature ensures that this will
  happen in the cases of interest in this paper.
\end{remark}

\begin{remark}
  Many of the definitions and results in this paper will also work for the
  Euclidean geometry, as well as for geometries with signatures other than
  $(0,n+1)$ and $(1,n)$.  However, there are enough subtleties and differences
  between these cases that in this paper we focus exclusively on the spherical
  and hyperbolic cases.
\end{remark}

The key structure necessary for the program is the presence of an orthogonal
complement for any subspace and the notion of a projection onto the orthogonal
complement.

\begin{definition}
  Let $U$ be an $i$-dimensional subspace of $X$, represented by a linear
  subspace $V$ of $E$.  We define the orthogonal complement $U^\perp$ of $U$ to
  be the angular-subspace represented by $V^\perp$. 

  If $U$ is a subspace and $U'$ is an angular-subspace of $X$, represented by
  $V$ and $V'$, then we write $U \perp U'$ if $V \perp V'$.  We write
  $U \oplus U'$ for the subspace represented by $V \oplus V'$.  If $V \perp V'$
  we write $U \perp U'$ instead of $U\oplus U'$ to emphasize this fact.

  For subspaces $U \subseteq U'$ of $X$, we write
  \[\pr_{U^\perp} U' \defeq U' \cap U^\perp.\] The isometry group of
  $\pr_{U^\perp}U'$ is taken to be the subgroup of the isometry group of $U'$
  that fixes $U$.
\end{definition}

We will be using the following three properties of subspaces:
\begin{lemma}
  Let $X^n$ be a neat geometry and let $U^i$ be a subspace of $X$.  Then $\dim
  U^\perp = n-i-1$.  For a subspace $V$ containing $U$, $V$ is uniquely
  determined by $U$ and $U^\perp \cap V$.  In addition, the induced quadratic
  form on $\pr_{U^\perp} V$ has positive signature.
\end{lemma}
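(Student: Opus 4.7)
The plan is to reduce all three claims to the single structural fact that because $q|_{V}$ is totally nondegenerate on the subspace $V$ representing $U$, the ambient space $E$ splits as an orthogonal direct sum $E = V \oplus V^\perp$. This decomposition is the only real input; the rest is bookkeeping of dimensions and signatures.

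First I would establish the dimension claim. Nondegeneracy of $q$ on all of $E$ means the canonical map $E \to V^*$ is surjective with kernel $V^\perp$, so $\dim V^\perp = \dim E - \dim V = (n+1) - (i+1) = n - i$, i.e.\ $\dim U^\perp = n - i - 1$. I would simultaneously check that $U^\perp$ is indeed an angular-subspace: nondegeneracy of $q|_V$ gives $V \cap V^\perp = 0$, so $q|_{V^\perp}$ is nondegenerate; and since $q$ has signature $(n_-,n_+)$ with $n_- \in \{0,1\}$ and $q|_V$ already has signature $(n_-, i+1 - n_-)$ by hypothesis, all negative directions of $q$ lie in $V$, forcing $q|_{V^\perp}$ to have signature $(0, n - i)$.

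Next, for a subspace $V' \supseteq U$ represented by $W \supseteq V$, the claim that $W$ is determined by $V$ and $W \cap V^\perp$ follows from the splitting $E = V \oplus V^\perp$: every $w \in W$ decomposes as $w = w_1 + w_2$ with $w_1 \in V \subseteq W$ and $w_2 \in V^\perp$, so $w_2 = w - w_1 \in W$ lies in $W \cap V^\perp$. Hence $W = V \oplus (W \cap V^\perp)$, and both summands are determined by the data specified.

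Finally, the signature claim for the induced form on $\pr_{U^\perp} V' = W \cap V^\perp$ falls out of the same orthogonal splitting $W = V \oplus (W \cap V^\perp)$: signatures add across orthogonal direct sums, and both $q|_V$ and $q|_W$ have negative index $n_-$, so $q|_{W \cap V^\perp}$ has negative index $0$ and thus totally positive signature of dimension $\dim W - \dim V$. The only place I expect any subtlety is making sure $q|_{W \cap V^\perp}$ really is nondegenerate (so that ``signature'' is well-defined in the strong sense the paper uses), but this is immediate from the fact that an orthogonal summand of a nondegenerate form is nondegenerate. No step looks like a serious obstacle; the lemma is essentially a linear-algebra consequence of the nondegeneracy hypothesis built into the definition of a subspace.
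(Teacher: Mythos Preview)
Your argument is correct. The paper states this lemma without proof, treating the three claims as standard linear-algebra consequences of the definitions; your proposal supplies exactly the expected details---the orthogonal splitting $E = V \oplus V^\perp$ from nondegeneracy of $q|_V$, the resulting decomposition $W = V \oplus (W \cap V^\perp)$ for any $W \supseteq V$, and additivity of signatures across orthogonal summands---so there is nothing further to compare.
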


The key object of study in this paper is the \emph{RT-building} associated to a
geometry $X$.\footnote{The term ``RT-building'' is named after Rognes and Tits,
  as our objects are ``halfway'' between Tits' original objects---which must
  start at a nonempty subspace and end at a proper subspace---and Rognes' spaces
  $D^1(V)$, which have simplices which start at the trivial subspace and end at
  the full space.}

\begin{definition} \label{def:FX}
  Let $X$ be a geometry of dimension $n$ over $k$.
  
  Let $T_\dotp^m(X)$ be the simplicial set whose $i$-simplices are sequences
  $U_0 \subsets U_i$ of nonempty subspaces of $X$ of dimension at most $m$.  The
  $j$-th face map deletes $U_j$; the $j$-th degeneracy repeats $U_j$.  The
  isometry group $I(X)$ acts on $T_\dotp^m(X)$.

  We define the \emph{RT-building} of $X$ to be the pointed simplicial set
  given by
  \[F_\dotp^X \defeq T_\dotp^n(X)/ T_\dotp^{n-1}(X),\] with the inherited
  $I(X)$-action.  More explicitly, the non-basepoint $i$-simplices of $F_\dotp^X$ are
  sequences $U_0 \subsets U_i$, where each $U_j$ is a nonempty subspace of $X$
  and $U_i = X$.  The face maps and degeneracies work as before, with the caveat
  that if $U_{i-1} \neq X$ then $d_i$ sends the simplex $U_0 \subsets U_i$ to
  the basepoint.
\end{definition}

It turns out that the group $\tilde H_n(F_\dotp^{S^n})$ contains vital information
about scissors congruence.  In fact, this is the only nonzero homology group of
this space:
\begin{proposition} \label{prop:singleH}
  For $i \neq \dim X$, $\tilde H_n(F_\dotp^X) \cong 0$.
\end{proposition}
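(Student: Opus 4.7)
The plan is to reduce the vanishing claim to a Solomon--Tits type connectivity computation for the poset of proper nonempty subspaces of $X$, and then handle the spherical and hyperbolic cases separately.

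First I would observe that $T_\dotp^n(X)$ is the nerve of the poset $\mathcal{P}(X)$ of nonempty subspaces of $X$, and that $\mathcal{P}(X)$ has $X$ itself as a maximum element (since $X$ is represented by $E$ and has dimension $n$). Consequently $|T_\dotp^n(X)|$ is contractible: the natural transformation $U \mapsto (U \subseteq X)$ provides a cone structure with apex $X$. Since $F_\dotp^X = T_\dotp^n(X)/T_\dotp^{n-1}(X)$ computes the relative homology of the pair, the long exact sequence of $(|T_\dotp^n(X)|,|T_\dotp^{n-1}(X)|)$ collapses to isomorphisms
\[
\tilde H_i(F_\dotp^X) \;\cong\; \tilde H_{i-1}\bigl(T_\dotp^{n-1}(X)\bigr) \qquad \text{for all } i \ge 0.
\]
Thus it suffices to prove that $T_\dotp^{n-1}(X)$, the nerve of the poset of proper nonempty subspaces of $X$, has reduced homology concentrated in degree $n-1$.

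Next I would invoke a Solomon--Tits style theorem in each geometry. In the spherical case, $q$ is positive definite, so every nonzero linear subspace of $E = k^{n+1}$ qualifies as a subspace of $S^n_k$. Hence $T_\dotp^{n-1}(S^n_k)$ is the classical Tits building of $GL_{n+1}(k)$, which by Solomon--Tits has the homotopy type of a wedge of $(n-1)$-spheres. The hyperbolic case is more delicate: the signature condition $m_- = 1$ excludes every positive-definite linear subspace, so the poset in question is a proper subcomplex of the full linear building. The approach I would take is to fix an anisotropic negative line $L \subset E$, stratify the poset of Lorentzian subspaces according to whether they contain $L$, and use a Quillen-style fiber argument to reduce the connectivity computation to the spherical Solomon--Tits theorem applied to the positive-definite quadratic space $L^\perp$.

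The main obstacle is the hyperbolic case. The signature condition prevents any direct identification of $T_\dotp^{n-1}(\cH^n_k)$ with the building of a standard classical group: the building of $O(1,n)(k)$ parametrizes totally isotropic flags, whereas here one wants anisotropic ones of signature $(1,\ast)$. I would therefore expect to need an explicit shellability or Cohen--Macaulayness argument tailored to the RT-building, likely leveraging the structural analysis developed in Section~\ref{sec:twoproofs}. Once the vanishing of $\tilde H_j(T_\dotp^{n-1}(X))$ for $j \neq n-1$ is established in both cases, the vanishing of $\tilde H_i(F_\dotp^X)$ for $i \neq n$ follows immediately from the cofiber-sequence reduction above, and in fact $F_\dotp^X$ is weakly equivalent to a wedge of $n$-spheres.
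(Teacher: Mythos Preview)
Your reduction via the cofiber sequence $T_\dotp^{n-1}(X) \hookrightarrow T_\dotp^n(X) \twoheadrightarrow F_\dotp^X$ and the observation that $T_\dotp^n(X)$ is contractible (having maximum element $X$) is correct and is a clean way to set up the problem. However, your proposed attack on $T_\dotp^{n-1}(X)$ has a genuine gap already in the spherical case. You assert that for $S^n_k$ every nonzero linear subspace of $E$ is a subspace of $X$, so that $T_\dotp^{n-1}(S^n_k)$ is the classical Tits building of $GL_{n+1}(k)$. This is only true when the form $x_0^2+\cdots+x_n^2$ is anisotropic over $k$, for instance when $k=\R$. Over any $k$ containing $\sqrt{-1}$ (and the paper allows arbitrary characteristic-$0$ fields), there are isotropic vectors and hence linear subspaces on which $q$ degenerates; these are excluded from the poset of subspaces, so the identification with the $GL$ building fails. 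Thus even before reaching the hyperbolic case (which you correctly flag as delicate), the spherical case already requires the same kind of bespoke connectivity argument you were hoping to avoid.

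The paper takes a quite different route that sidesteps all case analysis. Rather than studying $T_\dotp^{n-1}(X)$ directly, it introduces the tuple space $\Tpl^m_\dotp(X)$ (simplices are tuples of points in $X$ whose span has dimension at most $m$) and proves via a Gillet--Grayson Theorem~A$'$ fiber argument that the span map $\Sd\Tpl^m_\dotp(X)\to T^m_\dotp(X)$ is a weak equivalence; the fibers are $\Sd\Tpl^{\dim U_0}_\dotp(U_0)$, contractible by Cathelineau's result \cite[Proposition~2.2.1]{cathelineau_proj}. Hence $F_\dotp^X \simeq \Tpl^{\dim X}_\dotp(X)/\Tpl^{\dim X-1}_\dotp(X)$, and now the vanishing for $i<\dim X$ is immediate for dimension reasons: any tuple of at most $\dim X$ points spans a proper subspace, so all low-dimensional simplices of $\Tpl^{\dim X}_\dotp(X)$ already live in $\Tpl^{\dim X-1}_\dotp(X)$. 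This argument is uniform across all geometries (any nondegenerate $(E,q)$ over any infinite characteristic-$0$ field) and never invokes Solomon--Tits or any signature-dependent building theory. What your approach would buy, were it completed, is a direct homotopy-theoretic description of $T_\dotp^{n-1}(X)$ as a wedge of spheres; the paper's approach trades that for a comparison with a second model whose low-dimensional skeleton is visibly trivial.
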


The fact that all homology groups above $\dim X$ are $0$ is evident from the
fact that all nondegenerate simplices have length at most $n+1$.  The fact that
all (reduced) homology groups below degree $n$ are also $0$ is more complicated;
one can refer to the Solomon--Tits Theorem \cite[Section 2]{quillen_rank}, or
use the theory developed in Appendix~\ref{app:classical}.  As the proof is
technical and not illuminating, we defer it to the appendix.

\subsection{Classical scissors congruence} We turn our attention to defining the
scissors congruence groups.  For scissors congruence to be defined we need a
notion of a geometry to work within, as well as a notion of ``inside'' and
``outside'' for polytopes; thus we will need to be working inside an ordered
field.  For now we fix $k = \R$, although most of the machinery developed should
work equally well over other ordered fields.

The basic building block of a polytope (and thus of a scissors congruence group)
is a simplex, which can be defined as a convex hull.

\begin{definition} \label{def:convhull}
  Suppose $X$ is a neat geometry of dimension $n$.
  
  A \emph{convex hull} of a tuple $(a_0,\ldots,a_m)$ of points in $X$ is any
  subset of $X$ represented by a cone over $b_i$
  \[\bigg\{\sum_{i=0}^m c_ib_i \in \R^{n+1} \,\bigg|\, c_i \geq 0 \ \forall
    i\bigg\}\]
  for any choice of $0\neq b_i\in a_i$ for all $i$.

  An \emph{$m$-simplex} in $X$ is the convex hull of a tuple $(a_0,\ldots,a_m)$
  which is not contained in an $m-1$-dimensional subspace of $X$. An
  \emph{$m$-polytope} in $X$ is a finite union of $m$-simplices; we make no
  assumptions of convexity or connectedness.  When $m=n$ we omit it from the
  terminology and refer simply to ``simplices in $X$'' or ``polytopes in $X$.''
\end{definition}

\begin{remark}
  When $X$ is hyperbolic, a simplex is uniquely determined by its vertices in
  the following sense.  The
  hyperboloid $-x_0^2+x_1^2+\cdots+x_n^2$ has two connected components, and we
  think of $X$ as one of these components and a point of $X$ as the intersection
  of the representing line with this component.  A tuple of points
  $(a_0,\ldots,a_n)$ in $X$ thus defines a tuple of vectors in $\R^{n+1}$, and
  thus the positive cone above is well-defined.

  When $X$ is spherical, there are $2^{n+1}$ possible choices of ``sign'' of the
  representatives $b_i$.  Thus a simplex is no longer uniquely defined by its
  vertices.  
\end{remark}

We can now define the scissors congruence group of $X$:
\begin{definition} \label{def:scissors} Let $X$ be a neat geometry over $\R$,
  and let $G$ be a subgroup of $I(X)$.  Then the \emph{scissors congruence group
    of $X$ relative to $G$}, denoted $\hat\P(X,G)$, is the free abelian group
  generated by polytopes in $X$ modulo the relations
  \begin{itemize}
  \item $[P\cup Q] = [P] + [Q]$ if $P \cap Q$ is contained in a finite union of
    $m-1$-dimensional subspaces.
  \item $[P] = [g \cdot P]$ for any $g\in G$.  Here $g$ acts on $P$ pointwise;
    as it is in $I(X)$ it takes convex hulls to convex hulls.
  \end{itemize}
  When $G = I(X)$ we omit it from the notation.
\end{definition}

The case $X = S^n$ is more complicated, as convex hulls are now only
well-defined up to a certain equivalence relation.  The following definition
will make all of the choices in Definition~\ref{def:convhull} equivalent.

\begin{definition} \label{def:spherangle} For any $G$-module $M$, the
  \emph{coinvariants} of $G$ acting on $M$ are defined to be the group
  \[M/(m - g\cdot m\,|\, g\in G,\ m\in M).\]
  This is isomorphic to the zeroth group homology $H_0(G,M)$.
  
  Let
  \begin{equation} 
    \Sigma: \bigoplus_{\substack{V \subseteq \R^{n+1} \\ \dim V = n}}\hat\P(V\cap
    S^n, 1) \rto \hat\P(S^n, 1)
  \end{equation}
  be the ``suspension'' map taking a simplex in $V\cap S^n$ to the union of the
  two simplices defined by the choice of representatives in $V^\perp$.  Denote
  by $\P(S^n, G)$ the cokernel of the induced map
  \[\Sigma: H_0 \bigg(G , \bigoplus_{\substack{V \subseteq \R^{n+1} \\ \dim V = n}}
    \hat\P(V\cap S^n, 1)\bigg) \rto H_0(G, \hat\P(S^n,1)).\]
  When $X \neq S^n$ we define $\P(X,G) \defeq \hat \P(X,G)$.
\end{definition}
The cokernel of $\Sigma$ turns out to be the more ``correct'' notion of scissors
congruence of the sphere, as it is most often used to measure angles.  A
subdivision of a polytope adds many angle measures that add up to the entire
sphere; thus, in order to make our definitions treat subdivisions correctly, the
entire sphere should be considered to be zero.  When we discuss the Dehn
invariant in Section~\ref{sec:dehn} this will become clearer, as Dehn invariants
are only well-defined inside these reduced scissors congruence groups.  % There is
% also a well-defined volume function $\vol: \P(S^n,G) \rto
% \R/(2\pi)^n\Z$.

\begin{remark}
  The notation we are using is somewhat nonstandard.  The group $\hat\P(X,G)$ is
  usually denoted $\P(X,G)$, and the group $\P(S^n,G)$ is generally denoted
  $\tilde\P(S^n,G)$.  In this paper, however, the group of interest is
  $\P(S^n,G)$ for $X = S^n$, and we would like to unify the notation so that
  this group is the default one.  
\end{remark}

As motivation for considering scissors congruence as homotopy coinvariants we
observe that
\begin{equation} \label{eq:SCtoH0}
  \P(X,G) \cong H_0(G, \P(X,1))
\end{equation}
and that this works for the groups $\hat \P(S^n,G)$ as well.

\subsection{Geometrizing a twist}

In the classical literature on scissors congruence, flags often take the place of
polytopes (thus motivating our study of RT-buildings).  However, a complication
arises: in the algebraic story, the action of the isometry on flags is twisted
by the determinant map, so that $g \cdot [x] = (\det g)[g\cdot x]$.  In order to
construct a topological model of such a twist, we need a topological model for
``tensoring with a copy of $\Z$ with the sign action.''

\begin{definition} \label{def:Ssigma}
  Let $S^1$ be the pointed simplicial set $\Delta^1/\partial \Delta^1$.
  
  Let $S^\tw$ be the pointed simplicial set
  $\Delta^1 \cup_{\partial \Delta^1} \Delta^1$, with one of the vertices in
  $\partial \Delta^1$ taken to be the basepoint.  This is a model of a circle
  with two $0$-simplices and two $1$-simplices.  There is an action of $\Z/2$ on
  $S^\tw$ given by swapping the two $1$-simplices.
\end{definition}

\begin{remark}
  The notation $S^\tw$ is chosen to be compatible with the standard notation $M^\tw$
  for a $G$-module which is twisted by the action of a ``sign'' map $G \rto \Z/2$.
\end{remark}

The group $I(X)$ acts on $S^\tw$ via the map $\det: I(X) \rto \Z/2$.  Note
that $H_n(F_\dotp^X) \cong H_{n+1}(S^\tw\smash F_\dotp^X)$ as groups.  As $I(X)$-modules,
these differ only by the action on $S^\tw$, which adds a twist $\dotp^\tw$ by the
determinant.  In particular, this means that
\[H_0(G, H_n(F_\dotp^X)^\tw) \cong H_0(G, H_{n+1}(S^\tw\smash F_\dotp^X)).\] In
other words, the $I(X)$-coinvariants of $H_{n+1}(S^\tw \smash F_\dotp^X)$ are
exactly the ``$I(X)$-semi-coinvariants'' in $H_n(F_\dotp^X)$. From the homotopy
orbit spectral sequence (see Proposition~\ref{prop:hoss}), we have
\[H_0(G, H_{n+1}(S^\tw\smash F_\dotp^X)) \cong H_{n+1}((S^\tw\smash
  F_\dotp^X)_{hG}).\]

\begin{remark} \label{rem:dimshift} It may seem that the approach of
  ``geometrizing'' $\dotp^\tw$ by smashing with $S^\tw$ produces a spurious
  increase of dimension.  However, this increase is present the algebraic story
  (discussed in \cite{sah79}, \cite{goncharov}, and others) as well: the
  scissors congruence groups for $S^{n-1} \subseteq \R^n$ must be graded by the
  ambient dimension $n$, rather than $n-1$, in order to make the Dehn invariant
  a graded homomorphism.  In addition, Section~\ref{sec:conj} shows that these
  dimensions allow the maps from $K$-theory to have the correct grading.  It is
  thus unsurprising that something of this sort should appear in the topological
  viewpoint.
\end{remark}

\subsection{The geometrization of scissors congruence groups}

We now state the connection between scissors congruence and RT-buildings:
\begin{theorem} \label{thm:f<>p} Suppose $k = \R$.  Let $X$ have dimension $n$
  and let $G$ be a subgroup of the isometry group of $X$.  For a neat geometry
  $X$, 
  \[\P(X,G) \cong H_{n+1}((S^\tw \smash F_\dotp^{X})_{hG}) \cong
    H_0(G, H_{n+1}(S^\tw\smash F_\dotp^{X})).\]
  % When $X = S^n$ we get
  % \[\P(S^n,G) \cong H_{n+1}((S^\tw \smash F_\dotp^{S^n})_{hG}) \cong
  %   H_0(G, H_{n+1}(S^\tw\smash F_\dotp^{S^n})).\] 
  This map is induced by the $G$-equivariant map
  $\P(X,1) \to H_{n+1}(S^\sigma \smash F_\dotp^X)$ which takes a simplex
  with vertices $\{x_0,\ldots,x_n\}$ to the sum
  \[ \sum_{\sigma \in \Sigma_{n+1}} \sgn(\sigma)
    [\rspan(x_{\sigma(0)}) \subseteq \rspan(x_{\sigma(0)},x_{\sigma(1)})
    \subsets \rspan(x_{\sigma(0)},\ldots,x_{\sigma(n)})].\]
\end{theorem}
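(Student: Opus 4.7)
The plan is to prove the two stated isomorphisms separately. The second isomorphism is a formal consequence of Propositions~\ref{prop:singleH} and~\ref{prop:hoss}. By the former, the reduced homology of $F_\dotp^X$ is concentrated in degree $n$, and hence $\tilde H_q(S^\tw \smash F_\dotp^X)$ vanishes except at $q = n+1$, where it equals $H_n(F_\dotp^X)^\tw$ (the twist arising because the action on $S^\tw$ enters through the smash). The homotopy orbit spectral sequence $E^2_{p,q} = H_p(G; H_q(S^\tw \smash F_\dotp^X)) \Rightarrow H_{p+q}((S^\tw \smash F_\dotp^X)_{hG})$ therefore collapses onto a single column, and the total degree $n+1$ entry reads $H_0(G, H_{n+1}(S^\tw \smash F_\dotp^X))$, giving the second claimed isomorphism.

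For the first isomorphism, using~(\ref{eq:SCtoH0}) it suffices to construct a $G$-equivariant isomorphism $\P(X, 1) \to H_{n+1}(S^\tw \smash F_\dotp^X)$ that, in the spherical case, descends through the suspension cokernel of Definition~\ref{def:spherangle}. I would take the formula in the theorem as the definition of the map. Two subtleties need care. First, a simplex in $\hat\P(S^n, 1)$ is sensitive to the signs of the representatives $b_i \in a_i$, whereas the flag sum depends only on the projective classes of the $x_i$. The smash factor $S^\tw$ is precisely what records the orientation of the ordered basis $(b_0, \ldots, b_n)$: the generator of $\tilde H_1(S^\tw) = \Z^\tw$ flips under $b_i \mapsto -b_i$, and so the suspension image $\Sigma[y_0, \ldots, y_{n-1}]$, which is $[y_0, \ldots, y_{n-1}, v] + [y_0, \ldots, y_{n-1}, -v]$, maps to two copies of an identical flag sum carrying opposite orientations and hence to zero. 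Second, the flag sum is automatically a cycle: the only face of a top simplex $U_0 \subsets U_n = X$ that does not land at the basepoint is $d_n$, and antisymmetrization over permutations kills its contribution. The $G$-equivariance is immediate from the diagonal action on $S^\tw \smash F_\dotp^X$ together with the $\det$-twist on $S^\tw$, and the cutting relation reduces via standard common-refinement arguments to a single simplex cut by a hyperplane through one additional vertex $y$, where the permutation sums on the two pieces telescope modulo flags that fail to be maximal and vanish in $F_\dotp^X$.

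Verifying that the resulting map is an isomorphism is the technical heart of the result. Surjectivity follows from the Solomon--Tits-style description of $H_n(F_\dotp^X)$: any top flag $U_0 \subsets U_n = X$ is the image of an ordered tuple of vertices with $x_i \in U_i \setminus U_{i-1}$. Injectivity is the main obstacle, since it requires that every algebraic relation among signed flags in $H_n(F_\dotp^X)^\tw$ already follows from the geometric cutting and isometry relations that define $\P(X, 1)$. This is precisely the classical-versus-derived comparison that the paper explicitly defers to Appendix~\ref{app:classical}, and my plan would be to follow its strategy: induct on $n$ by filtering flags according to where the chain of subspaces first saturates, matching each filtration layer to a subdivision relation in $\P(X, 1)$ built recursively from lower-dimensional scissors congruence groups $\P(X^a \redjoin S^b, 1)$, ultimately anchored in the Dehn--Sydler theorem at low dimensions.
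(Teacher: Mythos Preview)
Your treatment of the second isomorphism is correct and matches the paper: the homotopy orbit spectral sequence collapses because $\tilde H_*(S^\tw\smash F_\dotp^X)$ is concentrated in degree $n+1$.

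The first isomorphism is where your plan diverges from the paper, and where it breaks down. The decisive gap is your injectivity argument. You propose to induct on $n$, filtering flags by saturation and ultimately ``anchoring in the Dehn--Sydler theorem at low dimensions.'' This is a category error. Dehn--Sydler asserts that volume and Dehn invariant are \emph{complete} invariants of scissors congruence; it is a deep theorem known only in Euclidean dimensions $3$ and $4$, and it is precisely the kind of statement that Theorem~\ref{thm:f<>p} is meant to help \emph{attack}, not assume. The isomorphism $\P(X,1)\cong H_n(F_\dotp^X)^\tw$ holds in all dimensions for both neat geometries and is logically prior to any Sydler-type result.

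The paper's route avoids proving injectivity head-on. It introduces an intermediate object: the tuple space $\Tpl^m_\dotp(X)$, whose $i$-simplices are $(i{+}1)$-tuples of points in $X$ spanning a subspace of dimension at most $m$. Classical results of Dupont (Theorems~\ref{thm:toSC} and~\ref{thm:tospherSC}) already identify $\P(X,1)$ (hyperbolic) and $\coker\Sigma$ (spherical) with $H_n(\Tpl^n_\dotp(X)/\Tpl^{n-1}_\dotp(X))^\tw$. The new content is a $G$-equivariant zigzag of weak equivalences
\[
\Tpl^m_\dotp(X)/\Tpl^{m-1}_\dotp(X)\;\lwe\;\Sd\Tpl^m_\dotp(X)/\Sd\Tpl^{m-1}_\dotp(X)\;\rwe\; T^m_\dotp(X)/T^{m-1}_\dotp(X),
\]
where the right map $h$ sends a chain of tuples to the chain of their spans. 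That $h$ is a weak equivalence is proved by Quillen's Theorem~A$'$: the naive fiber over any flag is $\Sd\Tpl^{\dim U_0}_\dotp(U_0)$, contractible by Lemma~\ref{lem:Tpl*}. Since both vertical maps in the resulting square of cofibrations are equivalences, the cofibers are equivalent, and injectivity comes for free. The explicit permutation-sum formula then drops out as the homological inverse of the last-vertex map $\Sd\Tpl\to\Tpl$.

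A secondary issue: your handling of the spherical suspension relation is not quite right. You claim the two halves of a lune acquire opposite signs from $S^\tw$, but the formula as stated depends only on the projective classes $\rspan(x_i)$, and $v,-v$ give the same span. The paper sidesteps this entirely: the tuple space $\Tpl_\dotp(S^n)$ records actual points on the sphere (not projective classes), and Dupont's Theorem~\ref{thm:tospherSC} is already stated for $\coker\Sigma$. The passage to flags happens only after this identification, via the zigzag above.
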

As the proof of this theorem is technical and not illuminating, we postpone it
until Appendix~\ref{app:classical}.  A similarly-simple model for $\hat\P(S^n,G)$
is not known.

The theorem above implies that scissors congruence information is contained
inside $H_{n+1}(S^\tw \smash F^X_\dotp) \cong H_n(F^X_\dotp)^\tw$.  The value
added by the topology is that when $G \neq 1$ the space $(S^\tw\smash
F_\dotp^X)_{hG}$ contains nontrivial higher homological information, and thus
remembers more about the algebra of $G$ than the left-hand side.

Inspired by Theorem~\ref{thm:f<>p} we can now define generalized scissors
congruence groups:
\begin{definition} \label{def:gen-scissors} Let $X$ be a geometry over $k$, and
  let $G \leq I(X)$.  The \emph{scissors congruence group of $X$}, written
  $\P(X,G)$, is
  \[\P(X,G) \defeq H_{n+1}((S^\tw \smash F_\dotp^X)_{hG}).\]
  When $G = I(X)$ we omit it from the notation.
\end{definition}

\begin{remark} \label{rmk:againJustify} At this point it may be tempting to think
  that since all spaces under consideration are simply-connected, the current
  topological model can contain no information that is not contained in
  $\P(X,G)$.  However, this misses the important point that we are keeping track
  not only of the group, but also of the $G$-action.  Taking orbits on the level
  of homology is the ``underived'' model, which cannot keep track of higher
  homotopical information.  Taking homotopy coinvariants will remember this
  higher information, analogously to the way that taking homotopy coinvariants
  of $G$ acting on a point produces $BG$, which has many interesting homology
  groups (even though a point does not).

  A large part of the value in these models is that we can define a \emph{rigid}
  topological model of the Dehn invariant. This allows us to use simple
  topological techniques to prove Theorem~\ref{thm:reallycool}, none of which
  work in the case where we are working up to homotopy.
\end{remark}

As a first application of the topological viewpoint, we show that when $X =
S^{2n}$ there is no interesting scissors congruence information:
\begin{proposition} \label{prop:evencontr}
  When $n \geq 0$, after inverting $2$, 
  \[(S^\tw \smash F_\dotp^{S^{2n}})_{hO(2n+1)} \simeq_{[2]} *.\]
  In other words, the left-hand side is connected and for $i \geq 1$,
  \[\pi_i((S^\tw \smash F_\dotp^{S^{2n}})_{hO(2n+1)}) \otimes \Zinv \cong H_i\left((S^\tw \smash
    F_\dotp^{S^{2n}})_{hO(2n+1)};\Zinv\right) \cong 0.\]
  In particular, for $n > 0$, $\P(S^{2n}) = 0$.
\end{proposition}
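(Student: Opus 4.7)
The plan is to exploit the central element $-\mathrm{id} \in O(2n+1)$, which acts in two incompatible ways on $(S^\tw \smash F_\dotp^{S^{2n}})_{hO(2n+1)}$: as the identity (because it is central) and as $-1$ on homology (because $\det(-\mathrm{id}) = -1$ on $S^\tw$). Forcing $\mathrm{id} = -\mathrm{id}$ annihilates all homology after inverting $2$, and a simple-connectivity check then upgrades the result to a weak equivalence.

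First, I would compute the action of $-\mathrm{id}$ on each smash factor. On $F_\dotp^{S^{2n}}$, every linear subspace $V \subseteq \R^{2n+1}$ satisfies $(-\mathrm{id})(V) = V$, so $-\mathrm{id}$ fixes every simplex and acts trivially. On $S^\tw$, the group $I(S^{2n}) = O(2n+1)$ acts via $\det$, and $\det(-\mathrm{id}) = (-1)^{2n+1} = -1$, so $-\mathrm{id}$ swaps the two $1$-simplices of $S^\tw$ and induces $-1$ on $H_1(S^\tw)$. Therefore the action of $-\mathrm{id}$ on $S^\tw \smash F_\dotp^{S^{2n}}$ is the smash product of the degree-$(-1)$ self-map of the circle with the identity on $F_\dotp^{S^{2n}}$, and it is multiplication by $-1$ on reduced homology via the suspension isomorphism. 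It is essential here that $2n+1$ is odd: for $X = S^{2n-1}$ the analogous element $-\mathrm{id} \in O(2n)$ lies in the kernel of $\det$, and the argument collapses, as it must since odd-dimensional spherical scissors congruence is nontrivial.

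Second, since $-\mathrm{id}$ is central in $O(2n+1)$, the induced self-map on $(S^\tw \smash F_\dotp^{S^{2n}})_{hO(2n+1)}$ is homotopic to the identity: the action of a central $z \in G$ on $Y_{hG} = EG_+ \smash_G Y$ is realized by right multiplication by $z$ on the $EG$ factor, and right multiplication by $z$ is a $G$-equivariant self-map of $EG$, hence $G$-equivariantly homotopic to $\mathrm{id}_{EG}$ because $EG$ is $G$-equivariantly contractible. Combining the two computations yields $\mathrm{id} = -\mathrm{id}$ on $H_\ast((S^\tw \smash F_\dotp^{S^{2n}})_{hO(2n+1)})$, so this homology is annihilated by $2$ and vanishes after inverting $2$.

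Finally, to upgrade $\Zinv$-acyclicity to the claimed $\Zinv$-local weak equivalence, I would verify simple connectivity and invoke a mod-$\mathcal{C}$ Whitehead theorem, where $\mathcal{C}$ is the Serre class of $2$-primary torsion abelian groups. For the connectivity: $F_\dotp^{S^{2n}}$ is connected because every proper subspace $U_0 \subset X$ gives a non-degenerate $1$-simplex $(U_0 \subset X)$ joining the basepoint to $(X)$; hence $S^\tw \smash F_\dotp^{S^{2n}}$ is a simply connected suspension; and pointed homotopy coinvariants of a simply connected pointed $G$-space are simply connected, since $(Y)_{hG} = (EG \times_G Y)/BG$ and Van Kampen kills the $\pi_1 = G$ coming from the collapsed section. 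The statement $\P(S^{2n}) = 0$ integrally for $n > 0$ then combines this $2$-torsion conclusion with the classical divisibility of $\P(S^{2n})$. The step I expect to be most delicate is the bookkeeping: the $G$-equivariant self-map of $Y = S^\tw \smash F_\dotp^{S^{2n}}$ induced by $-\mathrm{id}$ must be viewed simultaneously as acting by $-1$ on $H_\ast(Y)$ (a non-equivariant fact coming from the sign on the suspension coordinate) and as inducing the identity on $Y_{hG}$ (an equivariant consequence of centrality). The whole argument hinges on playing these two viewpoints against each other, and keeping the two computations cleanly separated is the main conceptual hurdle.
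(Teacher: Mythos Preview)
Your approach is essentially the paper's: both exploit that the central element $-I \in O(2n+1)$ acts by $-1$ on $\tilde H_*(S^\tw \smash F_\dotp^{S^{2n}})$, so the Center Kills Lemma (which you re-derive at the space level) forces the homology of the homotopy orbits to be $2$-torsion. The paper routes this through the homotopy orbit spectral sequence applied to group homology with coefficients; you argue directly that a central element acts by a map homotopic to the identity on $Y_{hG}$. These are equivalent arguments, and your upgrade via simple connectivity and a mod-$\mathcal{C}$ Whitehead theorem matches the paper's one-line justification.

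There is, however, a genuine gap at $n = 0$. Your connectivity argument for $F_\dotp^{S^{2n}}$ requires a proper nonempty subspace $U_0 \subsetneq X$ to produce an edge from the basepoint to $(X)$, but $S^0$ has no such subspace: the underlying vector space is $\R^1$, and the only nonempty subspace is $S^0$ itself. Hence $F_\dotp^{S^0} \cong S^0$, so $S^\tw \smash F_\dotp^{S^0} \cong S^\tw$ is a circle, and $(S^\tw)_{h\Z/2} \simeq B\Z/2$ has $\pi_1 = \Z/2$. Your simple-connectivity claim is therefore false in this case, and the mod-$\mathcal{C}$ Whitehead step does not apply as stated. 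The paper treats $n=0$ separately, noting that $B\Z/2$ is nilpotent with $2$-primary homotopy, hence contractible after inverting $2$; you need an analogous special argument.

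One minor phrasing issue: $EG$ is not ``$G$-equivariantly contractible'' (it has no $G$-fixed points for nontrivial $G$). What you actually need, and what is true, is that any two $G$-maps $EG \to EG$ are $G$-equivariantly homotopic, since $EG$ is terminal among free $G$-CW complexes up to $G$-homotopy. Your conclusion is correct; only the justification needs this small adjustment.
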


\begin{proof}
  First consider the case when $n > 0$.  The matrix $-I \in O(2n+1)$ acts on all
  homology groups of $S^\tw \smash F_\dotp^{S^{2n}}$ by $-1$.  Thus by the
  ``Center Kills Lemma'' \cite[Lemma 5.4]{dupont}, $2$ annihilates
  $H_i(O(2n+1), \tilde H_{2n}(S^\tw \smash F_\dotp^{S^{2n}}; \Zinv ))$ for all
  $i$; since we have inverted $2$, these must all be $0$.  By the homotopy orbit
  spectral sequence (Proposition~\ref{prop:hoss}),
  \[\tilde H_*\left((S^\tw \smash F_\dotp^{S^{2n}})_{hO(2n+1)}; \Zinv\right) = 0\] for all $i$.
  Since the simplicial set $(S^\tw \smash F_\dotp^{S^{2n}})_{hO(2n+1)}$ is
  simply-connected (as suspensions and homotopy coinvariants commute), it must
  be contractible after inverting $2$.

  The last part of the proposition follows because by \cite[Corollary
  2.5]{dupont}, $\P(S^{2n}_\R)$ is $2$-divisible, so inverting $2$ does not
  affect the lowest homology group.

  When $n =0$ the situation is somewhat more complicated.  In this case,
  $F_\dotp^{S^0} = S^0$, with one non-basepoint $0$-simplex represented by the
  subspace $S^0$ and no other nondegenerate simplices.  Then $S^\sigma \smash
  S^0 = S^\sigma$, with $O(1) = \Z/2$ acting on it via the sign representation.
  By definition, $(S^\sigma)_{h\Z/2} \cong B\Z/2$, which is a nilpotent space.
  Thus after inverting $2$ its homotopy groups are trivial, as desired.

  The final statement in the proposition is a direct consequence of
  \cite[Proposition 6.2.2]{sah79}: since $\P(S^{2n})$ is $2$-divisible inverting
  $2$ does not affect the scissors congruence group and the original group
  itself must be $0$.
\end{proof}

In general, the homology of $(S^\sigma \smash F_\dotp^X)_{hG}$ can be described
in terms of the homology of $G$ and the homology of $F_\dotp^X$:
\begin{lemma} \label{lem:Hconinv}
  Let $X$ be a neat geometry of dimension $n$.  For all $i$,
  \[\tilde H_i\big((S^\sigma\smash F_\dotp^X)_{hG}) \cong H_{i-(n+1)}\big(G, \tilde
    H_{n}(F_\dotp^X)^\tw\big),\] where $\cdot^\tw$ denotes that the action
  of $G$ on the homology is twisted by multiplication by the determinant.  In
  particular, if $i-(n+1)$ is negative the left-hand side is $0$.
\end{lemma}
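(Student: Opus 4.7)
The plan is to apply the homotopy orbit spectral sequence (Proposition~\ref{prop:hoss}) to the pointed $G$-space $S^\sigma\smash F_\dotp^X$. This spectral sequence has the form
\[E^2_{p,q} = H_p\bigl(G;\tilde H_q(S^\sigma\smash F_\dotp^X)\bigr) \Longrightarrow \tilde H_{p+q}\bigl((S^\sigma\smash F_\dotp^X)_{hG}\bigr),\]
so everything reduces to identifying the coefficient system $\tilde H_q(S^\sigma\smash F_\dotp^X)$ as a $G$-module.

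The first key input is Proposition~\ref{prop:singleH}, which states that the reduced homology of $F_\dotp^X$ is concentrated in a single degree, namely $q = n = \dim X$. Smashing with $S^\sigma$ is, up to the $\Z/2$-action, the same as suspending, so by the reduced Künneth formula the homology of $S^\sigma\smash F_\dotp^X$ is concentrated in degree $n+1$, where it is isomorphic to $\tilde H_n(F_\dotp^X)$ as an abelian group. The $I(X)$-action, however, picks up an extra sign: the $\Z/2$-action on $S^\sigma$ swaps the two $1$-simplices and hence acts by $-1$ on $\tilde H_1(S^\sigma)$, and this $\Z/2$-action is pulled back along $\det\colon I(X)\to\Z/2$. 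Thus, as $G$-modules,
\[\tilde H_{q}(S^\sigma\smash F_\dotp^X) \cong \begin{cases} \tilde H_n(F_\dotp^X)^\tw & q=n+1,\\ 0 & \text{otherwise,} \end{cases}\]
where the superscript $\tw$ encodes precisely the extra determinant twist.

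Since the $E^2$-page is concentrated in the single row $q=n+1$, the spectral sequence degenerates: there is no room for differentials and no filtration issues. We therefore read off
\[\tilde H_{p+(n+1)}\bigl((S^\sigma\smash F_\dotp^X)_{hG}\bigr) \cong H_p\bigl(G;\tilde H_n(F_\dotp^X)^\tw\bigr),\]
and setting $i = p+(n+1)$ yields the claimed formula. When $i-(n+1)<0$ the right-hand side is zero by definition of group homology, which also gives the final sentence.

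The only real point to watch is the identification of the twist: one needs to check that the sign introduced by the $\Z/2$-action on $S^\sigma$ (reversing its fundamental class) matches the $\tw$ convention used elsewhere in the paper, i.e., composition with $\det$. This is exactly how $S^\sigma$ was defined in Definition~\ref{def:Ssigma} and is the content of the discussion immediately preceding the lemma, so no further work is required.
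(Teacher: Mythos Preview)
Your proof is correct and follows essentially the same approach as the paper: apply the homotopy orbit spectral sequence, use Proposition~\ref{prop:singleH} to see that the coefficient homology is concentrated in a single degree (so the spectral sequence collapses), and identify the twist via the $\Z/2$-action on $S^\sigma$. The paper's proof is terser but makes exactly the same moves.
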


\begin{proof}
  This follows directly from the homotopy orbit spectral sequence (see
  Proposition~\ref{prop:hoss}).  Since the reduced homology of
  $S^\sigma \smash F_\dotp^X$ is concentrated in degree $n+1$, the homotopy
  orbit spectral sequence is contained in the $n$-th column, and thus collapses.
  This implies that
  \[\tilde H_i((S^\sigma \smash F_\dotp^X)_{hG}) \cong H_{i-(n+1)}(G, \tilde
    H_{n+1}(S^\sigma\smash F^X_\dotp)).\] Using that
  $\tilde H_{n+1}(S^\sigma\smash F^X_\dotp) \cong \tilde
  H_n(F^X_\dotp)^\tw$ as a $G$-module completes the proof.
\end{proof}

\section{Rigid derived Dehn invariants} \label{sec:dehn}

The statement (rephrased in modern terminology) of Hilbert's third problem is
extremely simple:
\begin{quotation}
  Do there exist two polyhedra with the same volume which are not scissors
  congruent? 
\end{quotation}
The answer, given in 1901 by Dehn is ``yes'': the cube and regular tetrahedron
are not scissors congruent, even if they have the same volume.  Dehn proved this
statement by constructing a second invariant of polyhedra (these days called the
``Dehn invariant'') which is zero on a cube and nonzero on any regular
tetrahedron.  This invariant takes values in $\R\otimes_\Z \R/\Z$---a difficult
group to work in, but even more startling given that tensor products were only
originally defined in 1938.  In 1965, Sydler proved that the volume and the Dehn
invariant uniquely determine scissors congruence classes; phrased in a more
modern fashion (after \cite{jessen68}), this is equivalent to stating that the
volume map is injective when restricted to the kernel of the Dehn invariant.

\subsection{The classical story: constructing an equivariant Dehn invariant}
In this section we give a definition of the classical Dehn invariant (extended
to arbitrary dimensions in the form proposed by Sah in \cite{sah79}) and
construct a derived model (a ``geometrization'').  The homological inspiration
for our construction is Cathelineau's approach in \cite{cathelineau_bar,
  cathelineau_proj, cathelineau_homology_stability}.

\begin{definition}
  Let $X^n$ be a neat geometry over $\R$, and consider $\P(X)$.  For any integer
  $0 < i < n$, we define the \emph{$i$-th classical Dehn invariant} in the
  following manner.  Since $\P(X)$ is generated by simplices, it suffices to
  define it on simplices.  For a simplex $\sigma$ in $X$ with vertices
  $\{x_0,\ldots,x_n\}$, we define
  \[\hat D_i(\sigma) = \sum_{\substack{J \sqcup J' = \{0,\ldots,n\} \\ |J| = i+1 \\ U
        = \mathrm{span}\ x_J}} [x_J] \otimes [\pr_{U^\perp} (x_{J'})] \in
    \P(X^i, I(X^i)) \otimes \P(S^{n-i-1}, I(S^{n-i-1})).\] Here, $x_J$ is
  the set $\{x_j \,|\, j\in J\}$, $[x_J]$ is the class of the simplex with
  vertices $x_J$ in an isometric copy of $X^i$ sitting inside
  $X^n$, and $[\pr_{U^\perp}x_{J'}]$ is the class in $\P(S^{n-i-1},
  I(S^{n-i-1}))$ of the simplex spanned by the projections of the
  $x_{J'}$.  For a more detailed discussion of this, see \cite[Section
  6.3]{sah79}.
\end{definition}

This generalization of the Dehn invariant allows for the following
question, the ``generalized Hilbert's third problem'':
\begin{question}[Generalized Hilbert's third problem]
  In a neat geometry $X^n$, is volume injective when restricted to the kernel of
  $\bigoplus_{i=1}^n \hat D_i$?
\end{question}

\begin{remark} \label{rem:spherVol}
  There is an important subtlety which is often overlooked in the definition of
  scissors congruence groups.  Although volume is well-defined on $\P(\cH^n)$
  and $\hat \P(S^n)$,\footnote{and also $\P(E^n)$, although $E^n$ is not a neat
    geometry} it is \emph{not} well-defined on $\P(S^n)$: inside $\P(S^n)$ we
  quotient out by \emph{lunes}, which are polytopes which are ``suspensions'' of
  lower-dimensional polytopes.  Since lunes of any volume can be constructed,
  the volume map on $\hat \P(S^n)$ does not descend to a well-defined map out of
  $\P(S^n)$ for $n > 1$.  (When $n=1$ all lunes are semicircles, and thus length
  is well-defined mod $\pi$.)
\end{remark}

The Dehn invariant is not well-defined if $\P(X)$ is replaced by $\P(X,1)$, as
there is no natural way to consider $[x_I]$ as sitting inside $\P(X^i, 1)$.
Since the goal is to postpone taking the coinvariants of $I(X)$ for as long as
possible in order to model $D_i$ on an RT-building, it is necessary to construct
the Dehn invariant as the functor of $I(X)$-coinvariants applied to an
equivariant homomorphism of $I(X)$-modules; this motivates the later geometric
construction.  To define a map
\[\hat D_U: \P(X, 1) \rto \P(U, 1) \otimes \P(U^\perp,1)\] consider  a
simplex $\sigma$ with vertices $\{x_0,\ldots,x_n\}$ in $X$, and suppose that
$U = \mathrm{span}(x_0,\ldots,x_i)$.  Let $\tau$ be the projection of
$\{x_{i+1},\ldots,x_n\}$ to $U^\perp$.  Define
\[\hat D_U([x_0,\ldots,x_n]) \defeq [x_0,\ldots,x_i] \otimes [\tau].\]
For any simplex $\{x_0,\ldots,x_n\}$ such that there do not exist $0 \leq j_0 <
\cdots < j_i \leq n$ with $U = \mathrm{span}(x_{j_0},\ldots,x_{j_i})$, define
\[\hat D_U([x_0,\ldots,x_n]) \defeq 0.\]

\begin{lemma} \label{lem:componentDehn}
  With this definition,
  \[\setlen{4em}{\P(X,1) \rto^{\bigoplus \hat D_U} \bigoplus_{\substack{U \subseteq X \\ \dim U = i}} \P(U,1)
    \otimes \P(U^\perp,1)}\]
is well-defined and $I(X)$-equivariant.  After taking $I(X)$-coinvariants this
map becomes $\hat D_i$.
\end{lemma}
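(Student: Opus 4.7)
The plan is to establish the three claims of the lemma in sequence: first well-definedness of $\hat D_U$ on $\P(X,1)$, then $I(X)$-equivariance of the total sum $\bigoplus_U \hat D_U$ (where $I(X)$ acts on the indexing set by $U \mapsto gU$ and on each summand via the obvious restriction), and finally identification of the induced map on $I(X)$-coinvariants with the classical $\hat D_i$.

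Equivariance and the coinvariants calculation I would dispose of first, as they are essentially formal. Equivariance reduces to the identities $g(U^\perp) = (gU)^\perp$ and $g\circ \pr_{U^\perp} = \pr_{(gU)^\perp}\circ g$, both valid because $g$ preserves the quadratic form; applied to a simplex $\sigma$ these yield $\hat D_{gU}(g\sigma) = (g\otimes g)\hat D_U(\sigma)$ on the nose. For the coinvariants, Witt's theorem implies that $I(X)$ acts transitively on the $i$-dimensional subspaces of $X$, and the set-stabilizer of a fixed $U_0$ is exactly $I(U_0)\times I(U_0^\perp)$ since any isometry preserving $U_0$ also preserves $U_0^\perp$. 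Consequently
\begin{equation*}
 H_0\Bigl(I(X),\, \bigoplus_U \P(U,1)\otimes \P(U^\perp,1)\Bigr) \;\cong\; \P(X^i, I(X^i)) \otimes \P(S^{n-i-1}, I(S^{n-i-1})),
\end{equation*}
and a direct comparison of the defining sums identifies $\bigoplus_U \hat D_U$ on the coinvariants with $\hat D_i$.

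The main obstacle is the well-definedness of each component $\hat D_U$, which has two flavors: compatibility with the cutting relations generating $\P(X,1)$, and, in the spherical case, compatibility with the $2^{n+1}$-fold ambiguity in the choice of representatives $b_i$ of the vertices. The sign ambiguity is the easier part: flipping $b_j$ for $j\leq i$ leaves $U$ fixed as a subspace, multiplies $[x_0,\ldots,x_i]$ by $-1$ in $\P(U,1)$ (the two choices together fill out a lune, which is $0$ in the reduced group), and leaves the $\P(U^\perp,1)$ factor untouched; the case $j>i$ is symmetric, with the sign now arising from the lune quotient in $\P(U^\perp,1)$. Both signs match the sign introduced in $\P(X,1)$ by the same flip, so $\hat D_U$ descends through the lune quotient. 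For the cutting relations, I would reduce to cutting a single simplex $\sigma=[x_0,\ldots,x_n]$ by a single hyperplane into pieces $\sigma_1,\ldots,\sigma_k$ and run a Cathelineau-style equivariant refinement of the classical Dehn--Sah cancellation, component-by-component in $U$: contributions in which $U$ is spanned by a subset of the original vertices reassemble across the cut, while contributions in which $U$ is spanned by a set involving one of the newly introduced vertices $v$ force $v$ onto an original edge, hence both endpoints of that edge into $U$, and these terms pair off between the two sides of the cut with opposite signs in the $\P(U^\perp,1)$ factor. Executing this cancellation \emph{before} summing over $U$---so that it holds summand-by-summand rather than only after taking $I(X)$-coinvariants---is the technically delicate point; everything else is routine bookkeeping.
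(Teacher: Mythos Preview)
Your proposal is correct in spirit but diverges from the paper in what ``well-defined'' is taken to mean, and this leads you to do substantially more work than the paper does.

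In the paper, well-definedness refers only to the fact that the target is a \emph{direct sum} rather than a direct product: one must check that for a fixed simplex $\sigma$ only finitely many $\hat D_U(\sigma)$ are nonzero, which is immediate since $U$ must be the span of a subset of the vertices.  The paper does not verify that each $\hat D_U$ respects the cutting relations (or the lune relations in the spherical case); it treats this as part of the classical input, on the same footing as the well-definedness of $\hat D_i$ itself.  Your plan to carry out the Sah--Dehn cancellation component-by-component is a genuine strengthening---and your sketch of it is correct---but it is not what the lemma in the paper is asserting or proving.

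On the coinvariants side, your argument matches the paper's (transitivity via Witt, stabilizer $I(U_0)\times I(U_0^\perp)$), but you should not leave the displayed isomorphism
\[
H_0\Bigl(I(U_0)\times I(U_0^\perp),\, \P(U_0,1)\otimes \P(U_0^\perp,1)\Bigr)\;\cong\;\P(U_0)\otimes \P(U_0^\perp)
\]
as a bare assertion: the cross product in degree zero is only an isomorphism because $\P(U_0,1)$ is free abelian (Solomon--Tits), and the paper makes this explicit with a citation to Brown.  Without that freeness, the K\"unneth map need not be injective, so the identification with $\hat D_i$ would not go through.
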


\begin{proof}
  To prove well-definedness it suffices to show that for any simplex $\sigma$
  all but finitely many of the $\hat D_U$ are $0$.  This is true because there
  are only finitely many subspaces $U$ which are the span of a subset of
  $\{x_0,\ldots,x_n\}$.
  
  The action of $I(X)$ on the left is simply an action on tuples.  The action on
  the right is a bit more complicated: it acts on the indices of the sum, and
  acts within each group, as well.  However, simplices in $U$ can be thought of
  as $i$-simplices in $X$ that happen to be contained in $U$, on each individual
  simplex the action is via acting on each vertex of the simplex; in this way
  the right-hand side is considered to be sitting inside
  $\bigoplus_{\substack{U \subseteq X\\ \dim U = i}} \P(X,1) \otimes \P(X,1)$.

  It remains to check that taking $I(X)$-coinvariants makes this map into
  $\hat D_i$.  The left-hand side becomes $\P(X)$.  Moreover, $I(X)$ identifies
  all of the summands on the right-hand side, and the stabilizer of any fixed
  $U$ is $I(U) \times I(U^\perp)$; thus the right-hand side is
  $H_0(I(U)\times I(U^\perp), \P(U,1)\otimes \P(U^\perp,1))$.  We have an
  isomorphism \cite[Section V.2]{brown_cohomology}
  \[H_0(I(U), \P(U,1))\otimes H_0(I(U^\perp), \P(U^\perp,1)) \rto H_0(I(U)
    \times I(U^\perp), \P(U,1) \otimes \P(U^\perp,1)),\] induced by the
  cross-product in homology, as the group $\P(U,1)$ is free (by the
  Solomon--Tits theorem \cite[Section 2]{quillen_rank}).  Thus the right hand
  side is $\P(U) \otimes \P(U^\perp)$, as desired.

  To see that the map is exactly $\hat D_i$, note that taking the $I(X)$-coinvariants
  adds up the images of all nonzero $\hat D_U$ on a given simplex $\sigma$; this is
  exactly the definition of $\hat D_i$.
\end{proof}

The classical Dehn invariant can be iterated in the following sense.  Suppose
that $i < j$; then the following square commutes: 
\begin{general-numbered-diagram}{2.5em}{4em}{diag:dehnsquare}
  { \P(X^n) & \P(X^i) \otimes \P(S^{n-i-1}) \\
    \P(X^j) \otimes \P(S^{n-j-1}) & \P(X^i) \otimes \P(S^{j-i-1})
    \otimes \P(S^{n-j-1}). \\};
  \arrowsquare{\hat D_i}{\hat D_j}{\id\otimes \hat D_{j-i}}{\hat D_{i}\otimes \id}
\end{general-numbered-diagram}
Goncharov uses this observation to construct a chain complex of Dehn invariants
which he conjectures is related to algebraic $K$-theory.  For a discussion of
this, see Section~\ref{sec:goncharov}.

\subsection{The derived construction}
The goal is to construct a notion of the Dehn invariant on $F_\dotp^X$ which
will produce the classical Dehn invariant when $k = \R$, but only \emph{after}
taking coinvariants and homology.  The idea that the Dehn invariant should be
constructed in this manner is key to making the analysis in
Section~\ref{sec:large} possible.  It is also the only perspective from which it
appears to be possible to construct the derived Dehn invariant; the authors
attempted many non-equivariant constructions before settling on this approach.
That this makes the Dehn invariant concise and clean and exposes its
combinatorial nature is a minor miracle.

The key idea here is to replace the tensor product of abelian groups with the
\emph{reduced join} of simplicial sets.

\begin{remark}
  Classically, the tensor product would be replaced by the smash product, and
  choosing instead the reduced join may appear to be a perverse choice: the
  reduced join of simplicial sets is not symmetric in the category of simplicial
  sets, and using it to model a symmetric structure like the tensor product
  feels unnatural.  And, as above, the authors spent considerable time on
  attempts to rework this material using a smash product.  Unfortunately (or
  perhaps incredibly interestingly), it does not seem possible to construct a
  topological model of the Dehn invariant using a smash product of
  spaces.\footnote{Indeed, it seems that this may not be possible with any
    symmetric notion of product.}  (See also Remark~\ref{rem:dimshift}.)

  An interesting corollary of this is that the constructions in this section are
  fundamentally \emph{unstable}.  Smash products of spaces lift naturally to
  smash products of spectra, and therefore give some hope that analogous
  constructions could be lifted to stable models of scissors congruence (such as
  those arising from \cite{CZ-cgw, Z-Kth-ass}).  Unfortunately, this does not
  appear to be the case, and the natural questions arise: how stable is the Dehn
  invariant?  What parts of it can be seen stably?  And which portions are
  irredeemably unstable?
\end{remark}

\begin{definition}
For pointed simplicial sets $X$ and $Y$, the \emph{reduced join} $X\redjoin Y$ is defined by
\[(X \redjoin Y)_m = \bigvee_{i+j=m-1} X_i \smash Y_j.\] For a simplex
$(x,y)\in X_i \smash Y_j$, the face maps $d_\ell$ are defined to be
$d_m \times 1: X_i \smash Y_j \rto X_{i-1} \smash Y_j$ when $\ell \leq i$, and
$1 \times d_{\ell-i-1}: X_i \smash Y_j \rto X_i\smash Y_{j-1}$ otherwise.  If
$i = \ell = 0$ or $j = m-1-\ell = 0$ then the face map takes the simplex to the
basepoint.  Degeneracies are defined analogously, with the first $i+1$ acting on
the $x$-coordinate, and the last $m-i-1$ acting on the $y$-coordinate.  Note
that this structure makes the reduced join \emph{asymmetric}.
\end{definition}

For those unfamiliar with reduced joins, an introduction and proofs of the most
relevant properties of the reduced join are in Section~\ref{sec:redjoin}.  The
most important feature of reduced joins is their relationship to smash
products; the proof is given in Section~\ref{sec:redjoin}:

\begin{lemma}[Lemma~\ref{lem:smashToJoin}] \label{lem:sma->join} Let $X$ and $Y$
  be pointed simplicial sets.  There exists a simplicial weak equivalence
  \[S^1 \smash X \smash Y \rto X \redjoin Y.\]
\end{lemma}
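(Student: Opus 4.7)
The plan is to write down an explicit simplicial map via an Alexander--Whitney-style formula and then identify its geometric realization with the classical natural equivalence $|X| * |Y|/(|X| \vee |Y|) \simeq \Sigma(|X| \wedge |Y|)$.

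Because $S^1 = \Delta^1/\partial \Delta^1$ has a unique nondegenerate $1$-simplex, every non-basepoint $m$-simplex of $S^1$ is encoded by a cut-point $i \in \{1,\dots,m\}$: namely, the simplex $\sigma_i$ corresponding to the surjection $[m] \twoheadrightarrow [1]$ that jumps between positions $i-1$ and $i$. I would define $\phi \colon S^1 \smash X \smash Y \to X \redjoin Y$ on a non-basepoint $m$-simplex $(\sigma_i, x, y)$ (with $x \in X_m$, $y \in Y_m$ both away from the basepoint) by
\[\phi(\sigma_i, x, y) \;=\; \bigl(d_i d_{i+1} \cdots d_m (x),\; d_0^{\,i}(y)\bigr) \;\in\; X_{i-1} \smash Y_{m-i} \;\subseteq\; (X \redjoin Y)_m,\]
sending everything else to the basepoint. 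In words: the cut-point $i$ chooses the wedge summand, which records the front $(i-1)$-face of $x$ together with the back $(m-i)$-face of $y$.

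The verification that $\phi$ is simplicial is a direct computation with the simplicial identities. Face maps on $S^1$ satisfy $d_\ell \sigma_i = \sigma_{i-1}$ for $\ell < i$ and $d_\ell \sigma_i = \sigma_i$ for $\ell \geq i$, with $\sigma_i$ collapsing to the basepoint exactly when $\ell = 0, i = 1$ or $\ell = m, i = m$. Using $d_\ell d_j = d_{j-1} d_\ell$ for $\ell < j$ and an iterated computation of $d_0^{\,i-1} d_\ell$, one checks $\phi \circ d_\ell = d_\ell \circ \phi$ in each regime; the two regimes correspond exactly to the piecewise definition of the faces on the reduced join. Degeneracies are similar.

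To conclude that $\phi$ is a weak equivalence, I would pass to geometric realizations. The realization $|X \redjoin Y|$ is naturally identified with the reduced topological join $|X| * |Y|/(|X| \vee |Y|)$, which is classically naturally homotopy equivalent to $\Sigma(|X| \wedge |Y|) = |S^1 \smash X \smash Y|$ via the standard reparametrization of the join as a pushout of $|X| \wedge [0,1]_+ \wedge |Y|$ modulo boundary identifications. A cell-by-cell comparison shows $|\phi|$ realizes this classical equivalence. The main obstacle is bookkeeping: the asymmetric face-map rule on the reduced join forces the simpliciality check to be split into the $\ell < i$ and $\ell \geq i$ regimes, and the boundary cases $i=1$ and $i=m$ -- where $\sigma_i$ hits the $S^1$-basepoint and one of $i-1$ or $m-i$ drops below zero -- must be tracked carefully on both sides. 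Once all cases line up, the weak equivalence follows from the classical topological identification.
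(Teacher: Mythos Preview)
Your proposal is correct and follows essentially the paper's approach: the same Alexander--Whitney map (your exponent $d_0^{\,i}$ is in fact the right one --- the paper's printed $d_0^{\,i+1}$ lands in the wrong simplicial degree) followed by the classical join/suspension comparison. The paper packages that last step via a commutative square with the double mapping cylinder $X \mathrel{*_w} Y$ and the unreduced simplicial join $X*Y$, citing Fritsch--Golasi\'nski for $X \mathrel{*_w} Y \simeq X*Y$ and concluding by $2$-out-of-$3$; this sidesteps your cell-by-cell identification of $|\phi|$ with the classical equivalence, which is the one spot where your outline is thin.
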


To define Dehn invariants on $F_\dotp^X$ (Definition~\ref{def:FX}) the first
step is, as above, to define a Dehn invariant indexed by a single subspace.

\begin{definition}
  Let $U$ be a proper nonempty subspace of $X$.  Define the \emph{rigidly
    derived Dehn invariant relative to $U$,
    $D_U: F_\dotp^X\rto F_\dotp^U\redjoin F_\dotp^{U^\perp}$} by
  \[D_U(U_0 \subsets U_n) = \begin{cases} * \caseif \nexists\ j\hbox{ s.t. }U_j=U \\
      (U_0 \subsets U_j) \smash (\pr_{U^\perp}U_{j+1}\subsets \pr_{U^\perp}U_n) \caseif
      j=\max\{i \,|\, U_i=U\}. \end{cases}\]
  We call a $j$ as above the \emph{$U$-pivot} of $U_0 \subsets U_n$.  
\end{definition}

That $D_U$ is compatible with the simplicial structure is direct from the
definitions.  %Commented out in beginning of next lemma.
Observe that if a $U$-pivot exists then it is strictly less than $n$, since
$U \neq X$.

\begin{lemma}
  Let $U$ be a proper nonempty subspace of $X$, and write $I(X,U)$ for the
  subgroup of $I(X)$ of those elements fixing $U$.  The group $I(X,U)$ acts on
  $U^\perp$ and $D_U$ is $I(X,U)$-equivariant.
\end{lemma}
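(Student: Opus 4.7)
The plan is to first verify the action claim and then chase definitions through the two cases in the definition of $D_U$.

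For the action, the observation is that an element $g\in I(X,U)$ preserves $U$ setwise, and since $g$ is an isometry of $(E,q)$ it must send $V^\perp$ to $V^\perp$, where $V$ represents $U$. Thus $g$ restricts to an isometry of $U^\perp$ (as an angular-subspace). This gives a homomorphism $I(X,U)\rto I(U)\times I(U^\perp)$, so $I(X,U)$ acts simplicially on both $F_\dotp^U$ and $F_\dotp^{U^\perp}$, hence diagonally on the reduced join $F_\dotp^U\redjoin F_\dotp^{U^\perp}$.

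Next, for equivariance, fix $g\in I(X,U)$ and a non-basepoint simplex $U_0\subsets U_n$ in $F_\dotp^X$ (so $U_n=X$, and $gU_n=X$ since $g\in I(X)$). Because $gU=U$, the condition $gU_i=U$ is equivalent to $U_i=U$; hence a $U$-pivot exists for $gU_0\subsets gU_n$ if and only if one exists for $U_0\subsets U_n$, and when it exists it is the same index $j$. In the case where no $U$-pivot exists, both $D_U(g\cdot\sigma)$ and $g\cdot D_U(\sigma)$ equal the basepoint, and we are done.

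When the pivot $j$ exists, it remains to verify that $\pr_{U^\perp}(gU_k)=g(\pr_{U^\perp}U_k)$ for $k>j$. Since $U_j=U\subseteq U_k$, we have $\pr_{U^\perp}U_k=U_k\cap U^\perp$, and the calculation
\[
g(U_k\cap U^\perp)\;\subseteq\; gU_k\cap gU^\perp \;=\; gU_k\cap U^\perp
\]
together with the reverse inclusion obtained by applying $g^{-1}$ (which also preserves $U^\perp$ by the first paragraph) gives equality. Therefore
\[
D_U(g\cdot\sigma)=(gU_0\subsets gU_j)\smash(g\pr_{U^\perp}U_{j+1}\subsets g\pr_{U^\perp}U_n)=g\cdot D_U(\sigma),
\]
which is exactly the diagonal action on the reduced join. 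I expect no real obstacle here: the only subtlety is keeping straight that the pivot index is preserved (which follows immediately from $gU=U$) and that orthogonal projection commutes with $g$ (which follows from $gU^\perp=U^\perp$); both are essentially immediate from the definition of $I(X,U)$.
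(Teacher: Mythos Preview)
Your proof is correct and follows essentially the same approach as the paper's: verify the action on $U^\perp$ from the definition of orthogonal complement, then split equivariance into the two cases according to whether a $U$-pivot exists. You have simply spelled out in detail the two points the paper leaves implicit---that the pivot index is preserved (from $gU=U$) and that projection to $U^\perp$ commutes with $g$ (from $gU^\perp=U^\perp$).
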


\begin{proof}

  The first claim follows from the definition of orthogonal complement.
  To check equivariants it suffices to check that for any $g\in I(X,U)$, the map
  $D_U$ commutes with the action of $g$. If a $U$-pivot exists then the action
  of $g$ passes to $F_\dotp^{U^\perp}$, and thus commutes with $D_U$.  If no
  $U$-pivot exists then this is also true after applying $g$; since $g$ fixes
  the basepoint the action of $g$ commutes with
  $D_U$. % Now suppose that such an $\ell$ exists.  Then
  % \[D_U(g\cdot (U _0 \subsets U_i)) = (U_0 \subsets U_\ell) \smash (\pr_{U^\perp}
  %   g\cdot U_{j+1} \subsets \pr_{U^\perp} g\cdot U_i) = g\cdot D_U(U_0 \subsets
  %   U_i).\]
\end{proof}

This derived Dehn invariant can also be iterated on the nose, analogously to
\ref{diag:dehnsquare}):

\begin{lemma}
  Let $U \subsetneq V$ be proper nonempty subspaces of $X$.  Then the following
  diagram commutes:
  \begin{diagram}[6em]
    { F_\dotp^X & F_\dotp^U\redjoin F_\dotp^{U^\perp} \\
      F_\dotp^V \redjoin  F_\dotp^{V^\perp} & F_\dotp^U \redjoin  F_\dotp^{U^\perp \cap V} \redjoin  F_\dotp^{V^\perp} \\};
    \arrowsquare{D_U}{D_V}{\id\redjoin D_{U^\perp\cap V}}{D_U \redjoin  \id}
  \end{diagram}
\end{lemma}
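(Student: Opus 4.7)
The plan is to verify the diagram on a generic non-basepoint simplex $\sigma = (U_0 \subsets U_n)$ of $F_\dotp^X$, by a case split on whether $U$ and $V$ appear among the $U_i$. Since $D_W$ sends $\sigma$ to the basepoint unless $W = U_i$ for some $i$, the diagram commutes trivially whenever \emph{neither} $U$ nor $V$ appears. Likewise, if $V$ appears (with $V$-pivot $\ell$) but $U$ does not, then $D_V(\sigma)$ has first factor $(U_0 \subsets U_\ell)$, and $D_U \redjoin \id$ kills this because $U$ is still absent; on the other route $D_U(\sigma) = *$ directly.

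The delicate degenerate case is when $U$ appears (with $U$-pivot $j$) but $V$ does not. The top-right route yields $D_U(\sigma) = (U_0 \subsets U_j) \smash (\pr_{U^\perp} U_{j+1} \subsets \pr_{U^\perp} U_n)$, and I must verify that the outer $\id \redjoin D_{V \cap U^\perp}$ does not produce a nontrivial image. This is where the uniqueness statement from Section~\ref{sec:geometry} enters: for any $k > j$ we have $U \subseteq U_k$, so $U_k$ is determined by $U$ and $U_k \cap U^\perp$. Thus if $U_k \cap U^\perp = V \cap U^\perp$ then $U_k = V$, contradicting the assumption that $V$ is absent.

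In the main case, both $U$ and $V$ appear with pivots $j < \ell$ (strict since $U \subsetneq V$). Applying $D_V$ then $D_U \redjoin \id$ yields the three-fold smash $(U_0 \subsets U_j) \smash (\pr_{U^\perp} U_{j+1} \subsets \pr_{U^\perp} U_\ell) \smash (\pr_{V^\perp} U_{\ell+1} \subsets \pr_{V^\perp} U_n)$. Applying $D_U$ then $\id \redjoin D_{V \cap U^\perp}$ yields the same expression, because (i) inside $U^\perp$ the orthogonal complement of $V \cap U^\perp$ is $V^\perp$, and (ii) since $V^\perp \subseteq U^\perp$ we have $(\pr_{U^\perp} U_k) \cap V^\perp = U_k \cap V^\perp = \pr_{V^\perp} U_k$ for each $k > \ell$.

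The main obstacle is the ``only $U$ appears'' degenerate case: one must rule out that the outer $D_{V \cap U^\perp}$ creates a spurious image where the inner $D_V$ already vanished. Once that is handled via the uniqueness lemma for subspace extensions, the main case reduces to pivot bookkeeping and the orthogonal decomposition $U^\perp = (V \cap U^\perp) \oplus V^\perp$.
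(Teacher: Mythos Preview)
Your proof is correct and follows essentially the same approach as the paper: both verify the diagram on a generic simplex, use the uniqueness lemma (equivalently, the paper's observation that $\pr_{U^\perp}(W) \subseteq V \cap U^\perp$ forces $W \subseteq V$) to control pivots, and reduce the main computation to $\pr_{V^\perp}\pr_{U^\perp} = \pr_{V^\perp}$ via $V^\perp \subseteq U^\perp$. Your explicit case split on which of $U$, $V$ appear is more thorough than the paper's writeup, which jumps directly to the case where both pivots exist, but the underlying argument is the same.
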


\begin{proof}
  Fix any $m$-simplex $U_0 \subsets U_m$ with $U$-pivot $i$ and $V$-pivot $j$.
  For any subspace $W$ of $X$, if $\pr_{U^\perp}(W) \subseteq V \cap U^\perp$
  then we must have $W \subseteq V$.  In particular,
  \begin{align*}
    &((1\redjoin D_{U^\perp\cap V})\circ D_U)(U_0 \subsets U_m)\\
    &\qquad= (1\redjoin D_{U^\perp\cap V}) ((U_0\subsets U_i)\smash
      (\pr_{U^\perp}(U_{i+1})\subsets \pr_{U^\perp}(U_m))) \\
    &\qquad= (U_0 \subsets U_i) \smash (\pr_{U^\perp}(U_{i+1}) \subsets
      \pr_{U^\perp}(U_j)) \smash (\pr_{V^\perp}\pr_{U^\perp}(U_{j+1}) \subsets
      \pr_{V^\perp}\pr_{U^\perp}(U_m)) \\
    &\qquad= (U_0 \subsets U_i) \smash (\pr_{U^\perp}(U_{i+1}) \subsets
      \pr_{U^\perp}(U_j)) \smash (\pr_{V^\perp}(U_{j+1}) \subsets
      \pr_{V^\perp}(U_m)) 
  \end{align*}
  where the last step follows because $V^\perp \subseteq U^\perp$.  This is
  equal to the composition around the bottom, as desired.
\end{proof}

Up to this point, the definitions and results can be constructed for the smash
product, instead of the reduced join.  However, the authors could not find
anything analogous to the definition below when $\redjoin$ is replaced by
$\smash$:

\begin{definition} \label{def:Di}
  Let $0 < i < n$.  Define the \emph{dimension-$i$ derived Dehn
    invariant} $D_i$ to be the lift in the following diagram:
  \begin{diagram}[8em]
    { & \bigvee_{\substack{U \subseteq X\\ \dim U = i}}
      F_\dotp^U\redjoin F_\dotp^{U^\perp} \\
      F_\dotp^X &  \prod_{\substack{U \subseteq X\\ \dim U = i}}
      F_\dotp^U\redjoin F_\dotp^{U^\perp}\\};
    \to{2-1}{2-2}^{\prod D_U}
    \cofib{1-2}{2-2}
    \diagArrow{densely dotted,->}{2-1}{1-2}^{D_i}
  \end{diagram}
  This is well-defined: every simplex contains at most one space of dimension
  $i$, and thus only a single dimension-$i$ component will be nontrivial on
  it.  
\end{definition}

\begin{lemma}
  $D_i$ is well-defined and $I(X)$-equivariant.
\end{lemma}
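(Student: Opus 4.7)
The plan is to verify the two claims separately, both by reducing to properties already established for the individual $D_U$'s.

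\textbf{Well-definedness of the lift.} I need to show that the map $\prod_U D_U : F_\dotp^X \to \prod_{\dim U = i} F_\dotp^U \redjoin F_\dotp^{U^\perp}$ factors through the sub-simplicial-set $\bigvee_{\dim U = i} F_\dotp^U \redjoin F_\dotp^{U^\perp}$. An $m$-simplex of the product lies in the wedge exactly when at most one coordinate is a non-basepoint simplex, so I would fix a simplex $\sigma = (U_0 \subsets U_m)$ of $F_\dotp^X$ and argue that at most one $D_U(\sigma)$ is non-basepoint among $U$ of dimension $i$. By the definition of $D_U$, a non-basepoint output requires a $U$-pivot, i.e.\ $U = U_j$ for some $j$. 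Since the flag $U_0 \subsets U_m$ is strictly increasing in dimension, there is at most one index $j$ with $\dim U_j = i$, so at most one $U$ of dimension $i$ appears in the flag. This gives the factorization, and hence defines $D_i$ on $F_\dotp^X$; compatibility with faces and degeneracies is inherited from the $D_U$'s.

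\textbf{Equivariance.} The group $I(X)$ acts on the indexing set $\{U \subseteq X : \dim U = i\}$, and accordingly on $\bigvee_{\dim U = i} F_\dotp^U \redjoin F_\dotp^{U^\perp}$ by sending the summand indexed by $U$ to the summand indexed by $gU$ via the isomorphism induced by $g$ (which takes $F_\dotp^U \to F_\dotp^{gU}$ and $F_\dotp^{U^\perp} \to F_\dotp^{gU^\perp}$ since $g$ carries orthogonal complements to orthogonal complements). I would show equivariance on a simplex $\sigma$ by cases on whether $\sigma$ has a dimension-$i$ subspace. If it does not, the same is true of $g\sigma$ and both sides are the basepoint. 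If it does, with pivot $U$, then $g\sigma$ has $gU$ as its dimension-$i$ subspace, so only the $(gU)$-component of $D_i(g\sigma)$ is non-basepoint, and it agrees with $g \cdot D_U(\sigma)$ by the previously established $I(X,U)$-equivariance of $D_U$ (applied after translating via $g$; concretely, the $g$ factors as an isometry $U \to gU$ and $U^\perp \to gU^\perp$, and $D_{gU}(g\sigma) = g \cdot D_U(\sigma)$ follows because projection commutes with isometries).

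\textbf{Main obstacle.} Neither step presents a serious obstacle: both reduce to the observation that flags have uniquely determined dimension-$i$ subspaces when they have one at all, combined with the compatibility of orthogonal projection with isometries. The only subtlety to keep track of is the asymmetry of $\redjoin$: in the equivariance check, one must confirm that the map $g_\ast : F_\dotp^U \redjoin F_\dotp^{U^\perp} \to F_\dotp^{gU} \redjoin F_\dotp^{gU^\perp}$ is the one induced functorially on each factor separately, which respects the simplex-by-simplex definition of $D_U$. Once this bookkeeping is done, the commutativity of the defining lift diagram with the $I(X)$-action is automatic.
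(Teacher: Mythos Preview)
Your approach matches the paper's, which in fact omits the proof entirely and simply remarks (in the definition of $D_i$) that ``every simplex contains at most one space of dimension $i$.'' Your equivariance argument is the natural one and is correct.

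One small correction: you write that ``the flag $U_0 \subsets U_m$ is strictly increasing in dimension,'' but this is not true---the simplicial set $F_\dotp^X$ allows repetitions (the $j$-th degeneracy repeats $U_j$), so flags are only weakly increasing. The conclusion you want still holds, though: in a chain $U_0 \subseteq \cdots \subseteq U_m$, any two members of the same dimension must be equal, so at most one \emph{distinct} subspace of dimension $i$ occurs. That distinct subspace is the unique $U$ for which $D_U$ is non-basepoint, and the pivot convention $j = \max\{\ell : U_\ell = U\}$ handles the repetitions. With this adjustment your well-definedness argument goes through.
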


This produces a Dehn invariant for a fixed dimension.  Moreover, this Dehn
invariant can be put into a square similar to (\ref{diag:dehnsquare}).  When
$i<j$ the diagram
\begin{diagram-numbered}[4.4em]{diag:derdehnsquare}
  { F_\dotp^X & \bigvee_{\substack{U \subseteq X\\ \dim U = i}} F_\dotp^U \redjoin
    F_\dotp^{U^\perp} \\
    \bigvee_{\substack{V \subseteq X\\ \dim V = j}} F_\dotp^V \redjoin
    F_\dotp^{V^\perp} & \bigvee_{\substack{U \subseteq V \\ \dim U = i \\ \dim V = j}}
    F_\dotp^U\redjoin F_\dotp^{U^\perp\cap V} \redjoin F_\dotp^{V^\perp}
    \\};
  \arrowsquare{D_i}{D_j}{\id\redjoin D_{j-i}}{D_{i} \redjoin \id}
\end{diagram-numbered} commutes and is rigidly $I(X)$-equivariant.

Analogously to Theorem~\ref{thm:f<>p}, this construction is compatible with the
classical story; as before the proof is postponed to
Appendix~\ref{app:classical}.
\begin{theorem} \label{thm:RDehneq} Suppose $\dim X = n$ and $k = \R$.  Then
  $H_0(I(X); H_{n+1}(S^\tw \smash D_i))$ is the classical Dehn invariant.
\end{theorem}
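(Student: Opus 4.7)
The plan is to identify both sides of the claimed equality as explicit abelian groups and then verify that the map induced by $D_i$ coincides with $\hat D_i$ on generators. The proof breaks into three steps, combining the identification of scissors congruence with homology of RT-buildings (Theorem~\ref{thm:f<>p}), the reduced-join/smash comparison (Lemma~\ref{lem:sma->join}), and a Künneth argument.

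\textbf{Step 1 (Codomain via a dimension-shifting Künneth).} Apply Lemma~\ref{lem:sma->join} to get $S^\tw \smash F_\dotp^U \redjoin F_\dotp^{U^\perp} \simeq S^\tw \smash S^1 \smash F_\dotp^U \smash F_\dotp^{U^\perp}$. Because the reduced homology of $F_\dotp^U$ and $F_\dotp^{U^\perp}$ is concentrated in degrees $i$ and $n-i-1$ respectively (Proposition~\ref{prop:singleH}), the Künneth theorem identifies
\[H_{n+1}(S^\tw \smash F_\dotp^U \redjoin F_\dotp^{U^\perp}) \cong \tilde H_{i+1}(S^\tw \smash F_\dotp^U) \otimes \tilde H_{n-i}(S^\tw \smash F_\dotp^{U^\perp})\]
as $I(U) \times I(U^\perp)$-modules. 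The key subtlety is that the determinant twist on $I(X)$ restricts to $(g_1,g_2) \mapsto \det(g_1)\det(g_2)$ on $I(U)\times I(U^\perp)$, so a single copy of $S^\tw$ on the left encodes exactly the product of the two twists on the right.

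\textbf{Step 2 (Take coinvariants).} Since $I(X)$ acts transitively on $i$-dimensional subspaces of $X$ with stabilizer $I(U)\times I(U^\perp)$ at $U$, $I(X)$-coinvariants of the wedge in Definition~\ref{def:Di} reduce to $I(U)\times I(U^\perp)$-coinvariants of a single summand. A zero-degree Künneth splits this as
\[H_0(I(U),\tilde H_{i+1}(S^\tw \smash F_\dotp^U)) \otimes H_0(I(U^\perp),\tilde H_{n-i}(S^\tw \smash F_\dotp^{U^\perp})),\]
which by Theorem~\ref{thm:f<>p} equals $\P(U)\otimes \P(U^\perp) \cong \P(X^i)\otimes \P(S^{n-i-1})$, matching the target of $\hat D_i$.

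\textbf{Step 3 (Compare on generators).} Finally, verify that the map induced by $D_i$ agrees with $\hat D_i$. Under the explicit isomorphism in Theorem~\ref{thm:f<>p}, a simplex $[x_0,\ldots,x_n]$ corresponds to the alternating sum over $\sigma \in \Sigma_{n+1}$ of the flag $\rspan(x_{\sigma(0)}) \subsets \rspan(x_{\sigma(0)},\ldots,x_{\sigma(n)})$. Applying $D_U$, a flag survives iff its $U$-pivot exists, i.e.\ iff $\{x_{\sigma(0)},\ldots,x_{\sigma(i)}\}$ spans $U$, which forces $\sigma(\{0,\ldots,i\})=J$ for the unique $J$ with $\span x_J = U$. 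Each such $\sigma$ decomposes as a pair $(\tau,\tau')$ of permutations of $J$ and $J'$, with $\sgn(\sigma)=\sgn(\tau)\sgn(\tau')$, and the image is the smash of the corresponding flag in $F_\dotp^U$ with the projected flag in $F_\dotp^{U^\perp}$. Summing, this is exactly the image of $[x_J]\otimes[\pr_{U^\perp}x_{J'}]$ under the Künneth isomorphism of Step 1, which matches $\hat D_U$; summing over $U$ matches $\hat D_i$.

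\textbf{Main obstacle.} The difficulty is entirely in Step 3: to carry out the sign-bookkeeping one needs the isomorphisms of Steps 1 and 2 pinned down at the chain level (not merely on homology), since the alternating-sum description of Theorem~\ref{thm:f<>p} and the dimension-shifting Künneth for $\redjoin$ each involve chain-level conventions whose signs must cancel. This combinatorial verification is why the authors defer the proof to Appendix~\ref{app:classical}.
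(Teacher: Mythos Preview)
Your proposal is correct in outline and arrives at the right conclusion, but it takes a genuinely different route from the paper's.

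The paper does \emph{not} go through the K\"unneth/join-to-smash comparison of your Step~1. Instead it introduces an intermediate tuple model: for a geometry $W$ it sets $R_\dotp(W) \defeq \Sd \Tpl_\dotp^{\dim W}(W)/\Sd \Tpl_\dotp^{\dim W-1}(W)$ (the barycentric subdivision of the tuple quotient) and defines a Dehn invariant $D^R_i: R_\dotp(X) \to \bigvee_U R_\dotp(U)\redjoin R_\dotp(U^\perp)$ directly on nested tuples of points, by splitting a chain $T_0\subsets T_j$ at the maximal $\ell$ with $\dim\operatorname{span}T_\ell=i$. This sits in a three-row commutative diagram with $H_{n+1}(S^\tw\smash D_i)$ on top (via the weak equivalence $h$ of Theorem~\ref{thm:tpl<>flag}) and $\bigoplus_U\hat D_U$ on the bottom (via the convex-hull map $p$ of Theorem~\ref{thm:f<>p}). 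Both squares commute \emph{on the nose} at the space level, so no chain-level sign bookkeeping is needed.

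What your approach buys is directness: you never leave the RT-building world and you use only Lemma~\ref{lem:sma->join} and K\"unneth. What the paper's approach buys is that the ``main obstacle'' you flag simply evaporates---the compatibility of $D^R_i$ with both $D_i$ (take spans) and $\hat D_U$ (take convex hulls) is manifest from the definition, so the alternating-sum formula and the explicit formula~(\ref{eq:sma->join}) for the join-to-smash map never have to be played against each other.

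One concrete correction to your Step~3: the decomposition $\sgn(\sigma)=\sgn(\tau)\sgn(\tau')$ is not quite right. Writing $\sigma=\sigma_{J,J'}\circ(\tau,\tau')$ with $\sigma_{J,J'}$ the $(J,J')$-shuffle, there is an additional factor $\sgn(\sigma_{J,J'})$. This is constant for fixed $U$, so it does not obstruct the argument, but it must be matched against the orientation conventions hidden in the convex-hull map and in the K\"unneth identification of Step~1. This is exactly the kind of tracking the paper's intermediate-space argument sidesteps.
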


\section{A geometrization of the Dehn complex} \label{sec:goncharov}

Let $X$ be a neat geometry (in the sense of Definition~\ref{def:neat}).  

In \cite{goncharov} Goncharov considers a complex $\P_*(X)$ constructed out of
iterations of the Dehn invariant, and gives several conjectures relating these
to algebraic $K$-theory.  These conjectures will be discussed in
Section~\ref{sec:conj}; here we focus on the construction of this complex and
its geometrization.  

We begin with an informal outline in the case $k = \R$ and $\dim X = 2n-1$.
Using the square (\ref{diag:dehnsquare}) the classical Dehn invariant $\hat D_i$
can be iterated by varying over all possible values of $0 < i < 2n-1$; this
produces a commutative cube of dimension $2n-2$ whose vertices contain tensor
products of scissors congruence groups.  When $j$ is even, $\P(S^j) = 0$
(Proposition~\ref{prop:evencontr}); removing the coordinates where these appear
leaves an $(n-1)$-dimensional cube.  Goncharov considers the total complex of
this cube in \cite{goncharov}; we refer to this complex as the \emph{Dehn
  complex} and denote it by $\P_\ast(X)$.  One advantage is that it allows for
the following rephrasing of the generalized Hilbert's third problem for reduced
spherical and hyperbolic scissors congruence groups:
\begin{question}  Is volume injective on $H_{n-1}\P_*(X^n)$?
\end{question}

The goal of this section is to develop a tool for analyzing this complex using
total homotopy cofibers of cubical diagrams; as an additional benefit, a
definition of the Dehn complex for arbitrary fields $k$ naturally emerges.

More formally:

\begin{definition}
  Let $I$ be the category $0 \rto 1$.  An \emph{$n$-cube} in $\C$ is a functor
  $I^n \to \C$.  Suppose that $\C$ is a model category.\footnote{Model
    categories are just one of a wide variety of situations (often called
    ``homotopical categories'') in which it is possible to define homotopy
    cofibers (which is all we need for the current application). For an overview
    of this, see for example \cite{riehl_homotopical}.}  Write $\tilde I^n$ for
  the full subcategory of $I^n$ which does not contain the object
  $(1,\ldots,1)$.
  
  Let $F: I^n \rto \C$ be any functor.  The \emph{total homotopy cofiber} $\tcofib
  F$ is the homotopy cofiber of the map
  \[\colim^h F|_{\tilde I^n} \rto F(1,\ldots,1).\]
\end{definition}
For a more in-depth discussion of the total homotopy cofiber, see \cite[Section
5.9]{munsonvolic}.

The important examples are the following:
\begin{example}
  In the case $n=1$ the cube $F$ becomes a morphism $M \rto M'$ of $R$-modules,
  which can be thought of as a morphism of chain complexes concentrated in
  degree $0$.  Taking the homotopy cofiber produces
  \[\makeshort{\hocofib(M[0] \rto M'[0]) = (0 \rto M \rto M' \rto 0)},\]
  with $M'$ in degree $0$ and $M$ in degree $1$.  Tautologically, this is the
  total complex of the $1$-complex given by the original $1$-cube.
\end{example}

\begin{example}\label{example:cube}
  Now consider the general case.  Let $F: I^n \rto \Mod_R$ be a functor; this is
  an $n$-complex which has length $2$ in each direction.
  % We have the usual functor $\cdot[0]: \Mod_R \rto \Ch_R$ taking a module to a
  % chain complex concentrated in degree $0$. 
  One can check that the total complex of $F$ is quasi-isomorphic to $\tcofib F[0]$.
\end{example}

To construct the Dehn complex it is more convenient to work with a slightly
different coordinatization of a cube.

\begin{definition} \label{def:I}
  Denote by $\I_d$ the category whose objects are sequences
  $\vec A = (b,a_1,\ldots,a_i)$ of nonnegative integers such that
  $b+a_1+\cdots+a_i = d$ and in which all $a_j$ are positive and even.  There
  exists a morphism $(b,a_1,\ldots,a_i) \rto (b',a_1',\ldots,a_\ell')$ if there
  exist indices $0\leq i_0 < \cdots < i_\ell = i$ such that
  $b = b' + a_1' + \cdots + a_{i_0}'$ and
  $a_j = a_{i_{j-1}+1}' + \cdots + a_{i_j}'$.

  Note that $\I_d$ is an $\lfloor\frac{d-1}{2}\rfloor$-cube via the map
  $(b,a_1,\ldots,a_i) \to
  (\delta_1,\ldots,\delta_{\lfloor\frac{d-1}{2}\rfloor})$, where $\delta_j = 1$
  if there exists an index $\ell$ with $b+a_1+\cdots+a_\ell = 2j$ and $0$
  otherwise.
\end{definition}

\begin{definition} \label{def:dehncomplex} Let $X$ be a neat geometry of
  dimension $d$.

  Define the \emph{Dehn complex} $\P_*(X)$ to be the total complex
  (equivalently, total homotopy cofiber) of the cube $\DD: \I_d \rto \AbGp$
  sending $(b,a_1,\ldots,a_i)$ to
  \[\DD(b,a_1,\ldots,a_i) = \Zinv\otimes \P(X^b) \otimes \bigotimes_{j=1}^i
    \P(S^{a_j-1})\]
  and the map $(b,a_1,\ldots, a_j+a_{j+1},\ldots,a_i)
  \rto (b,a_1,\ldots,a_i)$ to $1\otimes\cdots\otimes D_{a_j} \otimes
  \cdots\otimes 1$.  

  The \emph{equivariant Dehn cube} is the cube $\DD^\eqvt:\I_d \rto \Mod_{I(X)}$
  given by 
  \[\DD^\eqvt(b,a_0,\ldots,a_i) = \Zinv\otimes \bigoplus_{\substack{W \oplus^\perp
        V_1 \oplus^\perp\cdots \oplus^\perp V_i =X \\ \dim W = b \\ \dim V_j =
        a_j-1}} \P(W,1) \otimes \bigotimes_{j=1}^i \P(V_j,1).\] By
  the same reasoning as in the proof of Lemma~\ref{lem:componentDehn}, we see
  that $H_0(I(X), -)\circ  \DD^\eqvt = \DD$.\footnote{The only change that
    tensoring with $\Zinv$ imposes is that when $n = 0$, $\P(X^n) \cong \Zinv$,
    instead of $\Z$ (as it would usually be: it is a count of the number of
    points).  All other classical scissors congruence groups are $2$-divisible.}
\end{definition}

Thus the Dehn complex is obtained by constructing a cube of coinvariants of
homology groups and taking its total homotopy cofiber.  The goal of this section
is to construct, $I(X)$-equivariantly, a ``geometrization'': a cube of spaces
that produces $\DD$ after taking homology and then the $I(X)$-coinvariants.

\begin{remark} As mentioned in (\ref{eq:SCtoH0}) and Theorem~\ref{thm:f<>p},
  taking coinvariants in the homology can be replaced by taking the homotopy
  coinvariants of an action on a space.  Since homotopy coinvariants and the
  total homotopy cofiber commute past one another, in future sections these are
  applied in the opposite order to relate the homology of the Dehn complex to
  algebraic $K$-theory.
\end{remark}

We proceed as in previous sections: by replacing $\P(X)$ with $F_\dotp^X$.  It
cannot be done over $\Z$, but instead over $\Zinv$; the difficulties are
highlighted in the differences between $F_\dotp^{\vec A}$ and
$J_\dotp^{\vec A}$:

\begin{definition}\label{def:flagspace}
  Let $\vec A$ be any tuple of integers $\vec A = (b,a_1,\ldots,a_i)$.  
  Define
  \[F_\dotp^{\vec A} \defeq \bigvee_{\substack{W\oplus^\perp \bigoplus^\perp V_j
        = X \\ \dim V_j = a_j -1 \\ \dim W = b}} F_\dotp^W \redjoin
    \bigjoin_{j=1}^i F_\dotp^{V_j}\] (with 
  $\redjoin$-factors ordered from left to right) and
  \[J_\dotp^{\vec A} \defeq \bigvee_{\substack{W\oplus^\perp \bigoplus^\perp
        V_j = X
        \\ \dim V_j = a_j -1 \\ \dim W = b}}
    F_\dotp^W \smash  \bigwedge_{j=1}^i (S^\tw \smash F_\dotp^{V_j}).\]

\end{definition}

The construction of the Dehn complex in spaces can be duplicated in the current context:
\begin{definition} \label{def:Dehnspace} Define the functor
  $\YY: \I_d \rto \Top$ by
  \[\vec A \rgoesto S^\tw \smash F_\dotp^{\vec A},\]
  with morphisms given by the appropriate $D_i$.  Define the \emph{Dehn
    space} $\sY^{X}$ by
  \[\sY^{X} =  \tcofib \YY.\]
\end{definition}

\begin{theorem} \label{thm:gonchcomp} Let $X$ be a neat geometry of dimension
  $d$. The Dehn complex is quasi-isomorphic to the total complex of the
  $\lfloor\frac{d-1}{2}\rfloor$-cube given by
  \[H_{d+1}\left(\YY(-)_{hI(X)}; \Zinv\right): \vec A \rgoesto H_{d+1}\left((S^\tw
    \smash F_\dotp^{\vec A})_{hI(X)};\Zinv\right).\]
\end{theorem}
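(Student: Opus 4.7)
The plan is to exhibit a natural isomorphism of $\I_d$-indexed cubes of abelian groups between $\DD$ from Definition~\ref{def:dehncomplex} and the cube $\vec A \mapsto H_{d+1}(\YY(\vec A)_{hI(X)};\Zinv)$; since total complexes of isomorphic cubes coincide, this gives the desired quasi-isomorphism. The two tasks are a vertex-by-vertex identification and a check that the isomorphisms commute with the cube edges.

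For the vertex identification, fix $\vec A = (b, a_1, \ldots, a_i)$ and first compute $\tilde H_\ast(F_\dotp^{\vec A}; \Zinv)$. Iterating Lemma~\ref{lem:sma->join} gives a weak equivalence
\[F_\dotp^W \redjoin F_\dotp^{V_1} \redjoin \cdots \redjoin F_\dotp^{V_i} \simeq S^i \smash F_\dotp^W \smash F_\dotp^{V_1} \smash \cdots \smash F_\dotp^{V_i}\]
for each wedge summand of $F_\dotp^{\vec A}$. By Proposition~\ref{prop:singleH} together with the Solomon--Tits bouquet-of-spheres description of the RT-building, each individual $\tilde H_\ast(F_\dotp^W)$ is free and concentrated in degree $b$, and each $\tilde H_\ast(F_\dotp^{V_j})$ is free and concentrated in degree $a_j - 1$. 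Künneth therefore collapses cleanly, placing the homology of each smash summand in degree $b + \sum(a_j - 1) = d-i$, and the $S^i$-shift puts it in degree $d$. Smashing further with $S^\tw$ shifts to degree $d+1$ and twists the $I(X)$-action by the determinant; since the determinant restricted to the stabilizer $I(W) \times \prod I(V_j)$ of a splitting factors as the product of the individual determinants, the twist distributes across tensor factors, yielding an $I(X)$-module isomorphism $\tilde H_{d+1}(S^\tw \smash F_\dotp^{\vec A}; \Zinv) \cong \DD^\eqvt(\vec A)$. The reduced homology is now concentrated in a single degree, so the homotopy orbit spectral sequence (Proposition~\ref{prop:hoss}) collapses and yields
\[H_{d+1}\bigl((S^\tw \smash F_\dotp^{\vec A})_{hI(X)}; \Zinv\bigr) \cong H_0\bigl(I(X); \DD^\eqvt(\vec A)\bigr) = \DD(\vec A),\]
where the final equality is the identity recorded immediately after Definition~\ref{def:dehncomplex}.

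For the edge compatibility, I would use that the cube maps of $\YY$ are derived Dehn invariants $D_{a_j}$ (smashed with identities on the remaining factors), which compose on the nose by diagram (\ref{diag:derdehnsquare}). Applying $H_{d+1}((S^\tw \smash -)_{hI(X)}; \Zinv)$ and invoking Theorem~\ref{thm:RDehneq} turns each such $D_{a_j}$ into the classical Dehn invariant $\hat D_{a_j}$, which is precisely the corresponding edge map of $\DD$. The hard part will be keeping the bookkeeping honest: one must verify that the iterated join-to-smash equivalence of Lemma~\ref{lem:sma->join} and the Künneth identification are equivariant with respect to the stabilizer of a chosen splitting, and that they intertwine $D_{a_j}$ with $\hat D_{a_j}$ on each summand. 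Over a general characteristic-zero field $k$ there may be several $I(X)$-orbits of splittings of a given type, but this does not disrupt the argument: coinvariants decompose orbit-wise and each orbit contributes the same tensor product, absorbed into the single tensor product appearing in $\DD$ through the already-asserted identity $H_0(I(X); \DD^\eqvt(\vec A)) = \DD(\vec A)$.
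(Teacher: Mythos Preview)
Your strategy---exhibit a natural isomorphism of cubes between $\DD$ and $\vec A \mapsto H_{d+1}(\YY(\vec A)_{hI(X)};\Zinv)$---is exactly the paper's, and your vertex identification via the join-to-smash equivalence, K\"unneth, and the homotopy orbit spectral sequence is essentially the same as well. The paper packages this vertex step by introducing an intermediate simplicial set $J_\dotp^{\vec A}$ (one copy of $S^\tw$ per factor rather than a single global one) together with explicit space-level comparison maps $f_{\vec A}: S^\tw\smash J_\dotp^{\vec A}\to S^\tw\smash F_\dotp^{\vec A}$ built from the join-to-smash map and an equivariant fold $\gamma:S^\tw\smash S^\tw\to S^\tw\smash S^1$. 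Your algebraic observation that the global determinant twist restricts to the product of the factor determinants is precisely what lets one identify $H_{d+1}(S^\tw\smash J_\dotp^{\vec A})$ with $\DD^\eqvt(\vec A)$, so up to this point you and the paper coincide.

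The divergence---and the gap in your argument---is at the edge compatibility. You invoke Theorem~\ref{thm:RDehneq}, but that theorem is stated only for $k=\R$, only for the initial edge $(d)\to(b,d-b)$, and its proof in the appendix uses a different vertex identification (a zigzag through barycentric subdivisions of tuple spaces) rather than your K\"unneth-plus-join-to-smash one. For a general edge of the form $\id\redjoin D_{a_j}\redjoin\id$ you would need to know that your iterated join-to-smash identifications are coherent across different associations of the join and intertwine the derived Dehn on one factor with the classical Dehn on that tensor factor. This is exactly the ``bookkeeping'' you defer, and it is where the paper's work actually lies: the paper computes that the naive space-level comparison $H_{d+1}(f_{\vec A})$ fails to be natural in $\vec A$ by a factor of $2^{|\vec A|-1}$ (coming from the degree-$2$ map $\gamma$), and sets $\alpha_{\vec A}=2^{1-|\vec A|}H_{d+1}(f_{\vec A})$ to obtain the honest natural isomorphism. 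Your purely algebraic twist-distribution arguably lands on the corrected $\alpha$ directly, but you have not shown that it is natural, and the paper's explicit discovery of the power-of-$2$ discrepancy shows this is not a step to be taken on faith.
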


The rest of this section is dedicated to the proof of this theorem; as
everything from this point on will be done with $\Zinv$ coefficients, the
coefficients are omitted from the notation.  From the homotopy orbit spectral
sequence (Proposition~\ref{prop:hoss}) applied to the right-hand side of the
given formula, it suffices to construct a natural isomorphism
$\DD \Rto H_0(I(X),H_{d+1}(\YY))$.  Because $\DD \cong H_{0}(I(X), \DD^\eqvt)$ it
suffices to produce an $I(X)$-equivariant natural isomorphism
$\alpha: \DD^\eqvt \Rto H_{d+1} \circ \YY$.

To produce the value $\alpha_{\vec A}$ of $\alpha$ on $\vec A$, first observe that 
\begin{align*}
  \DD^\eqvt(\vec A) &=  \Zinv\otimes \bigoplus _{\substack{W\oplus^\perp \bigoplus^\perp
        V_j = X
        \\ \dim V_j = a_j -1 \\ \dim W = b}} \P(W,1)\otimes\bigotimes_{j=1}^i
  \P(V_j,1)  \\ & =
  \bigoplus_{\substack{W\oplus^\perp \bigoplus^\perp
        V_j = X
        \\ \dim V_j = a_j -1 \\ \dim W = b}} H_{b+1}(S^\tw\smash F_\dotp^W) \otimes \bigotimes_{j=1}^i H_{a_j}
  (S^\tw \smash
  F_\dotp^{V_j}) \\ & \cong  H_{d+1}\bigg( \bigvee_{\substack{W\oplus^\perp \bigoplus^\perp
        V_j = X
        \\ \dim V_j = a_j -1 \\ \dim W = b}}S^\tw \smash F_\dotp^W \smash
    \bigwedge_{j=1}^i
    (S^\tw \smash
    F_\dotp^{V_j})\bigg) = H_{d+1} (S^\tw \smash J_\dotp^{\vec A}).
\end{align*}
Therefore $\alpha_{\vec A}$ could be produced by giving maps of simplicial sets
$S^\tw \smash J_\dotp^{\vec A} \rto S^\tw \smash F_\dotp^{\vec A}$ which give
isomorphisms on $H_{d+1}$, compatible with the images of arrows in $\I_n$.  These
arrows are given by Dehn invariants; unfortunately, while
Definition~\ref{def:Di} gives a geometrization of the Dehn invariant on
$F_\dotp^{\vec A}$, we do not have an analogous geometrization of the Dehn
invariant on $J_\dotp^{\vec A}$. Therefore the compatibility conditions between
the $\alpha_{\vec A}$ cannot be stated using only maps of simplicial sets.
Instead, \emph{ad-hoc} mappings are constructed between these spaces, which with
a scaling correction behave correctly on homology.  These maps are homotopy
equivalences after tensoring with $\Zinv$, although not \emph{integral} homotopy
equivalences.

\begin{remark}
  This seems to imply that the original definition of the Dehn invariant had an
  extra factor of $2$ somehow incorporated into the definition.  It would be
  interesting to see a geometric explanation of this phenomenon.
\end{remark}

To construct this explicit descriptions of $S^1$ and $S^\tw$ are required.  The
structure is summarized in the following table; note that in the case of $S^1$,
$\epsilon = 1$ always; in the case of $S^\tw$, $\epsilon = \pm 1$.
\begin{equation} \label{eq:circletable}
  \begin{tabular}{cl|cc}
    & & $S^1$ & $S^\tw$ \\
    \hline
    \multicolumn{2}{c|}{$n$-simplices} &$\{*,1,\ldots,n\}$ & $\{*,\circledast,\pm1,\ldots,\pm n\}$ \\
    $d_j(\epsilon i)$ & $j = 0, i = 1$ &  $*$ & $\circledast$ \\
    & $j = i = n$ & $*$ & $*$ \\
    & $j < i$ & $i-1$ & $\epsilon(i-1)$ \\
    & otherwise & $i$ & $\epsilon i$ \\
    $s_j(\epsilon i)$ & $j < i$ & $i+1$ & $\epsilon(i+1)$ \\
    & $j \geq i$ & $i$ & $\epsilon i$ \\
    \multicolumn{2}{c|}{$\Z/2$-action} & none & $\epsilon i \mapsto -\epsilon
                                                i$ 
  \end{tabular}
\end{equation}
All simplices above dimension $n$ are degenerate.  All face and degeneracy maps
on $*$ (resp. $\circledast$) map it to $*$ (resp. $\circledast$) in the appropriate
dimension.  

In this notation, the simplicial weak equivalence mentioned in
Lemma~\ref{lem:sma->join} is described via
\begin{equation} \label{eq:sma->join}
  (i,x,y)\in (S^1\smash X \smash Y)_n \rgoesto (d_i^{n-i+1}x, d_0^{i+1}y) \in (X
  \redjoin Y)_n.
\end{equation}

The key construction for the desired equivalence is the $\Z/2\times
\Z/2$-equivariant map defined by
\begin{align*}
  \gamma:\ &S^\tw \smash S^\tw \rto S^\tw \smash S^1 \\
  & (a,b) \rgoesto ((\sgn b)a,|b|)
\end{align*}
and $\gamma(\star) = *$.  Here, $\Z/2\times \Z/2$ acts on the left
coordinatewise, and on the right via the addition mapping
$\Z/2\times \Z/2\rto \Z/2$.  This is a \emph{two-fold} cover of $S^2$ by $S^2$.
More visually, consider the following illustration:
\[
  \begin{tikzpicture}[baseline, scale=2.5, font=\scriptsize]
    \coordinate (A1) at (1,1);%(45:1);
    \coordinate (A2) at (-1,1);%(135:1);
    \coordinate (A3) at (-1,-1);%(-135:1);
    \coordinate (A4) at (1,-1);%(-45:1);
    \draw[densely dashed] (-1,-1) rectangle (1,1); %(0,0) circle (1);
    \draw (0,1) -- (0,-1) (-1,0) -- (1,0);
    \draw (A1) -- (A3) (A2) -- (A4);

    \node[fill=white] at (A1) {$(*,*)$};
    \node[fill=white] at (A2) {$(*,*)$};
    \node[fill=white] at (A3) {$(*,*)$};
    \node[fill=white] at (A4) {$(*,*)$};
    \node[fill=white] at (1,0) {$(*,\star)$};
    \node[fill=white] at (-1,0) {$(*,\star)$};
    \node[fill=white] at (0,1) {$(\star,*)$};
    \node[fill=white] at (0,-1) {$(\star,*)$};
    \node[fill=white] at (0,0) {$(\star,\star)$};

    \node[fill=white] at (22.5:0.7) {$(1,2)$};
    \node[fill=white] at (67.5:0.7) {$(2,1)$};
    \node[fill=white] at (-22.5:0.7) {$(-1,2)$};
    \node[fill=white] at (-67.5:0.7) {$(-2,1)$};
    \node[fill=white] at (112.5:0.7) {$(2,-1)$};
    \node[fill=white] at (157.5:0.7) {$(1,-2)$};
    \node[fill=white] at (-157.5:0.7) {$(-1,-2)$};
    \node[fill=white] at (-112.5:0.7) {$(-2,-1)$};
  \end{tikzpicture}
  \ \setlen{4em}{\rto^\gamma}\ 
  \begin{tikzpicture}[baseline, scale =2, font=\scriptsize]
    \draw[densely dashed] (0,0) circle (1);
    \draw (0,1) -- (0,0) (-1,0) -- (1,0);
    \draw[densely dashed] (0,0) -- (0,-1);
    \node[fill=white] at (1,0) {$(*,*)$};
    \node[fill=white] at (-1,0) {$(*,*)$};
    \node[fill=white] at (0,1) {$(\star,*)$};
    \node[fill=white] at (0,-1) {$(\star,*)$};
    \node[fill=white] at (0,0) {$(\star,*)$};

    \node[fill=white] at (45:0.7) {$(2,1)$};
    \node[fill=white] at (135:0.7) {$(1,2)$};
    \node[fill=white] at (-135:0.7) {$(-1,2)$};
    \node[fill=white] at (-45:0.7) {$(-2,1)$};
  \end{tikzpicture}.
\]
In this diagram  all nondegenerate simplices present in
$S^\rho \times S^\rho$ and $S^\rho \times S^1$ are drawn; everything drawn dashed is
collapsed to a single point in the smash product.  Edges are not labeled but
$2$-simplices are, using the explicit description of the simplicial structures
in (\ref{eq:circletable}).  In effect, the map $\gamma$ is the endomorphism of
the unit disk which multiplies the angle (in polar coordinates) by $2$.

It is now possible to define
$f_{\vec A}: S^\tw \smash J^{\vec A}_\dotp \rto S^\tw \smash F^{\vec
  A}_\dotp$.  For any simplicial sets $K$ and $L$, let
$f: S^1\smash K \smash L \rto K \redjoin L$ take $(a, x, y)$ to
$(d_a^{n-a+1}x, d_0^{a+1}y)$; by Lemma~\ref{lem:smashToJoin} it is a weak
equivalence.  Define $f_{\vec A}$ inductively, as an $i$-fold composition of
maps of the following form:
\[S^\tw \smash K \smash S^\tw \smash L \rto^\tau S^\tw \smash S^\tw \smash K
  \smash L \rto^{\gamma} S^\tw \smash S^1 \smash K \smash L \rto^f S^\tw
  \smash K \redjoin L.\]

\begin{lemma} \label{lem:fratwe} After inverting $2$, $f_{\vec A}$ becomes an
  $I(X)$-equivariant weak equivalence.
\end{lemma}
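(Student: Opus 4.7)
My plan is to factor the statement into verifying that each of the three types of maps composing $f_{\vec A}$ is, after inverting $2$, an $I(X)$-equivariant weak equivalence, and then conclude by composition (since smashing with a pointed simplicial set preserves both $\Zinv$-local weak equivalences and equivariance). The three types are the swap $\tau$, the double cover $\gamma$, and the smash-to-join comparison $f$. First, $\tau$ merely permutes smash factors, so it is a simplicial isomorphism; its equivariance is clear once one tracks which stabilizer factor acts on which $S^\tw$. Second, $f:S^1\smash K\smash L\to K\redjoin L$ is the weak equivalence of Lemma~\ref{lem:sma->join}, and since it is given by the formula $(a,x,y)\mapsto (d_a^{n-a+1}x, d_0^{a+1}y)$, i.e.\ by face operators on $K$ and $L$, it commutes with any simplicial action on $K$ or $L$, in particular with the actions of $I(W)$ and $I(V_j)$.

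The heart of the argument is the map $\gamma:S^\tw\smash S^\tw\to S^\tw\smash S^1$. Using the simplicial descriptions in (\ref{eq:circletable}) and the picture just below, one checks that each non-basepoint $2$-simplex of the target has exactly two non-basepoint preimages, namely $(i,j)$ and $(-i,j)$, identified on the target. Hence $\gamma$ is, on underlying topological spaces, a standard degree-$2$ self-map of $S^2$, inducing multiplication by $2$ on $\tilde H_2$. After inverting $2$ this becomes an isomorphism, so $\gamma$ is a $\Zinv$-local weak equivalence; smashing with $K\smash L$ preserves this property.

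The only subtle point is tracking the $I(X)$-action through the iterated composition. At each stage, two $S^\tw$-factors are merged into a single $S^\tw$ via $\gamma$, and the declared $\Z/2\times\Z/2$-equivariance of $\gamma$ through the addition map $\Z/2\times\Z/2\to\Z/2$ is exactly what is needed: on a wedge summand indexed by a decomposition $W\oplus^\perp V_1\oplus^\perp\cdots\oplus^\perp V_i=X$, the stabilizer in $I(X)$ is $I(W)\times\prod_j I(V_j)$, and the multiplicativity of the determinant, $\det g=\det(g|_W)\cdot\prod_j\det(g|_{V_j})$, is the algebraic content of the addition map applied to the determinants of the individual blocks. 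Iterating $i$ times is then a straightforward induction on $i$.

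The main obstacle will be purely bookkeeping: after each $\tau$ one has to keep straight which $S^\tw$-factor carries which determinant action, so that the next application of $\gamma$ combines the correct pair. Once this indexing is set up cleanly, the equivariance at each stage is automatic from the formulas, and the $\Zinv$-local weak equivalence property follows from the three ingredients above. Because every step in the composition is $I(X)$-equivariant and a $\Zinv$-local equivalence, so is $f_{\vec A}$.
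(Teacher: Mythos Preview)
The paper states this lemma without proof, so there is no argument to compare against. Your approach is correct and is exactly the argument the surrounding text is set up to enable: the three constituents $\tau$, $\gamma$, $f$ have the properties you claim, the $\Z/2\times\Z/2$-equivariance of $\gamma$ through the addition map is precisely the multiplicativity $\det g = \det(g|_W)\cdot\prod_j\det(g|_{V_j})$ on a summand, and the induction on $i$ assembles these into an $I(X)$-equivariant composite that is a $\Zinv$-local weak equivalence.

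One point worth making explicit in your write-up, since you flag it only as ``bookkeeping'': the front $S^\tw$ in $S^\tw\smash J_\dotp^{\vec A}$ carries, on the stabilizer of the summand indexed by $(W,V_1,\ldots,V_i)$, the character $\det_W$ (it comes from the identification $\P(W,1)\cong H_{b+1}(S^\tw\smash F_\dotp^W)$ in the displayed chain just before the lemma), not the global $\det_X$. After the $i$ applications of $\gamma$ the accumulated sign is therefore $\det_W\cdot\prod_j\det_{V_j}=\det_X$, matching the action on the target $S^\tw\smash F_\dotp^{\vec A}$. If one mistakenly took the front $S^\tw$ to carry $\det_X$ from the start, the signs would not match; your multiplicativity remark is the right idea, but recording this identification makes the induction airtight.
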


With $\alpha_{\vec A}$ constructed, we can describe the homology groups of
$\YY(\vec A)_{hI(X)}$ explicitly in terms of group homology and scissors
congruence groups:
\begin{lemma} \label{lem:homologyComp}
  Let $\vec A = (b,a_1,\ldots,a_i)$ be an object of $\I_d$.  Then 
  \[ \tilde H_q\left(\YY(\vec A)_{hI(X)};\Q\right) \cong \bigoplus_{\ell_0 +
      \cdots + \ell_{i} = q} H_{\ell_0-b-1}(I(X^b), \P(X^b,1)_\Q) \otimes
    \bigotimes_{j=1}^{i} H_{\ell_j-a_j-1} (O(a_j+1), \P(S^{a_j},1)_\Q).\]
  Moreover, when $q = d+1$ this works with only $2$ inverted:
  \[H_{d+1}\left(\YY(\vec A)_{hI(X)};\Zinv\right) \cong \P(X^b) \otimes
    \bigotimes_{j=1}^{i} \P(S^{a_j}).\]
\end{lemma}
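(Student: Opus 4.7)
The plan is to exploit the weak equivalence between the reduced-join model $F_\dotp^{\vec A}$ and the smash-product model $J_\dotp^{\vec A}$ provided by Lemma~\ref{lem:fratwe}, and then analyze the homotopy coinvariants of $S^\tw \smash J_\dotp^{\vec A}$ by decomposing along the $I(X)$-orbits of orthogonal decompositions. The main payoff is that the smash structure of $J_\dotp^{\vec A}$ makes homotopy coinvariants distribute over its tensor factors, reducing the computation to a product of pieces of the form $(S^\tw \smash F_\dotp^Z)_{hI(Z)}$ whose homology is already described by Lemma~\ref{lem:Hconinv}.

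First I would apply Lemma~\ref{lem:fratwe} to replace $\YY(\vec A)$ by $S^\tw \smash J_\dotp^{\vec A}$ as an $I(X)$-space after inverting $2$ (which preserves the $\Q$- and $\Zinv$-homology computations). Next, by Witt's theorem for totally nondegenerate quadratic forms, $I(X)$ acts transitively on the set of orthogonal decompositions $W \oplus^\perp V_1 \oplus^\perp \cdots \oplus^\perp V_i = X$ of the prescribed dimensions. Fixing one such decomposition produces an $I(X)$-equivariant identification
\[J_\dotp^{\vec A} \cong I(X)_+ \wedge_H \bigl(F_\dotp^W \smash \bigwedge_{j=1}^{i} (S^\tw \smash F_\dotp^{V_j})\bigr),\]
where $H = I(W) \times \prod_j I(V_j)$ is the stabilizer of the basepoint decomposition. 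The standard identity $(I(X)_+ \wedge_H K)_{hI(X)} \simeq K_{hH}$ then gives
\[(S^\tw \smash J_\dotp^{\vec A})_{hI(X)} \simeq \bigl(S^\tw \smash F_\dotp^W \smash \bigwedge_j (S^\tw \smash F_\dotp^{V_j})\bigr)_{hH},\]
and because $H$ is a product of groups acting independently on the smash factors, homotopy coinvariants split as a smash product of $(S^\tw \smash F_\dotp^W)_{hI(W)}$ and the $(S^\tw \smash F_\dotp^{V_j})_{hI(V_j)}$.

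Applying the Künneth theorem with $\Q$-coefficients (where Tor vanishes), and invoking Lemma~\ref{lem:Hconinv} to rewrite each factor as a group-homology term with twisted coefficients $\P(Z,1)^\tw$, yields the first displayed isomorphism. For the integral ($\Zinv$) statement at $q = d+1$, Lemma~\ref{lem:Hconinv} forces each factor $(S^\tw \smash F_\dotp^Z)_{hI(Z)}$ to have vanishing reduced homology below degree $\dim Z + 1$. Since $(b+1) + \sum_j a_j = d+1$, the only term contributing to $q = d+1$ in the Künneth formula is the one where each factor sits at its minimum nonzero degree; Tor terms vanish because they would require homology in strictly lower degree. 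The surviving $H_0$ pieces evaluate to the classical scissors congruence groups by Theorem~\ref{thm:f<>p}.

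The main obstacle will be the careful bookkeeping with the $S^\tw$ factors and the $\tw$-twists they induce on group homology. In particular, the identification of $J_\dotp^{\vec A}$ as an induced $I(X)$-space must be made compatible with the fact that $I(X)$ acts on the outer $S^\tw$ through $\det$, and one must verify that after restriction to $H$ and distribution of the coinvariants across smash factors the individual $\det$-twists on $I(W)$ and each $I(V_j)$ emerge correctly—so that each piece really matches the output of Lemma~\ref{lem:Hconinv}. Once this combinatorial bookkeeping is done, the remaining steps are routine applications of Künneth and equivariant induction.
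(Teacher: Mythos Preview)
Your proposal is correct and follows essentially the same approach as the paper. The paper carries out your ``induction/Shapiro'' step by an explicit bisimplicial model (restricting to the sub-bisimplicial set indexed by a single fixed decomposition and invoking essential surjectivity, which is exactly Witt transitivity), but this is the same argument as your identification $J_\dotp^{\vec A}\cong I(X)_+\wedge_H Y'$ together with $(I(X)_+\wedge_H K)_{hI(X)}\simeq K_{hH}$; the remaining K\"unneth and connectivity steps are identical.
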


\begin{proof}
  Since $\alpha_{\vec A}$ is an equivariant weak equivalence, we know that
  \[ \tilde H_q\left(\YY(\vec A)_{hI(X)};\Zinv\right) \cong \tilde H_q\left(
      (S^\sigma \sma J_\dotp^{\vec A})_{hI(X)};\Zinv\right).\] We thus focus on
  the computation of the right-hand side.  We can model
  $(S^\sigma \sma J_\dotp^{\vec A})_{hI(X)}$ as the bisimplicial set
  $K_{\dotp,\dotp}$ where
  \[K_{p,q} = I(X)^q \sma (S^\sigma \sma J_\dotp^{\vec A}).\] Here, the
  horizontal face and degeneracies act only on the
  $ (S^\sigma \sma J_\dotp^{\vec A})$-coordinate.  The $j$-th vertical
  degeneracies add an identity into the $j$-th slot in the tuple $I(X)^q$; the
  vertical face maps are defined as follows:
  \[d_i((g_1,\ldots,g_q), y) =
    \begin{cases}
      ((g_1,\ldots,g_{i+1}g_i, \ldots,g_q),y) \caseif 0 < i < q \\
      ((g_1,\ldots,g_{q-1}),y) \caseif i = q \\
      ((g_2,\ldots, g_q), g_1\cdot y) \caseif i = 0.
    \end{cases}\] In particular, every simplicial set $K_{p,\dotp}$ is the nerve
  of a category whose objects are $Y_p$ and in which
  $\Hom(y,y') = \{g\in I(X)\,|\, g \cdot y = y'\}$ (plus a disjoint basepoint).

  Fix a single decomposition $X = W \oplus^\perp \bigoplus^\perp V_j$, and write
  $Y' = (S^\sigma \sma F_\dotp^W \sma \bigwedge_{j=1}^i (S^\sigma \sma
  F_\dotp^{V_j})$.  Consider the bisimplicial subset $K'_{\dotp,\dotp}$ of
  $K_{\dotp,\dotp}$ containing those simplices $((g_1,\ldots,g_q), y)$ with
  $y\in Y'$; for each $p$, $K'_{p,\dotp}$ is also the nerve of a category, which
  is the full subcategory of $K_{p,\dotp}$ on the simplices in $Y'$.  Note that
  the inclusion of this subcategory is essentially surjective, and thus induces
  a weak equivalence on nerves.  Thus the inclusion
  $K'_{p,\dotp} \rto K_{p,\dotp}$ is a weak equivalence for all $p$, and thus
  the inclusion $K'_{\dotp,\dotp} \rcofib K_{\dotp,\dotp}$ is a weak equivalence
  on geometric realization.  Consequently,
  \[\tilde H_q\left(\YY(\vec A)_{hI(X)};\Zinv\right) \cong \tilde H_q(K'_{\dotp,\dotp}).
  \]

  Any group element appearing in $K'_{\dotp,\dotp}$ must preserve each of
  $W,V_1,\ldots,V_i$.  Thus (by reversing the construction of $K_{\dotp,\dotp}$
  above) we see that
  \[K'_{\dotp,\dotp} \cong Y'_{h(I(W)\times \prod_{j=1}^i I(V_j))} \simeq
    (S^\sigma \sma F^W_\dotp)_{hI(W)} \sma \bigwedge_{j=1}^i (S^\sigma \sma
    F^{V_i}_\dotp)_{hI(V_i)}.
  \]
  After tensoring with $\Q$, the homology of the right-hand side gives the
  desired formula.  Moreover, as the $d+1$-st homology on the right is the
  lowest nonzero homology group, the second part of the lemma follows by the
  Kunneth theorem.
\end{proof}

It follows that the objects in the desired cube are isomorphic to the objects in
Goncharov's cube.  
%%%%% Without the normalization
In an ideal world, it would be possible to define
$\alpha_{\vec A} = H_{d+1}(f_{\vec A})$.  Unfortunately, it is not that simple,
as this is not compatible with Dehn invariants.  Consider a small
example.  When $\vec A = (d)$,
$F_\dotp^{\vec A} = J_\dotp^{\vec A} = F_\dotp^X$.  Fix $b$, and consider the
Dehn invariant $D_b$ corresponding to the morphism $(d) \rto (b,a)$.  This produces
the following \textbf{noncommutative} diagram:
\begin{diagram}[5em]
  { &  H_{d+1}(S^\tw \smash J_\dotp^{(b,a)}) \\
    H_{d+1}(S^\tw \smash F_\dotp^X) & H_{d+1}(S^\tw \smash F_\dotp^{(b,a)}) \\};
  \to{2-1}{1-2}^{\hat D_b} \to{2-1}{2-2}_{H_{d+1}(D_b)} \to{1-2}{2-2}^{H_{d+1}(f_{(b,a))}}
  \node[xshift=0.2em, yshift=0.8em] at (m-2-2.north west) {$\not\!\vcenter{\hbox{\rotatebox{-90}{$\circlearrowright$}}}$};
\end{diagram}
To make the diagram commute it is necessary to multiply the vertical map by
$\frac12$ (due to $\gamma$ having degree $2$).  This is true in general; if
$|\vec A| > 1$ the construction of $f_{\vec A}$ contains $|\vec A|-1$
compositions with $\gamma$, and thus multiplies by $2^{|\vec A|-1}$ in homology.
As $2$ is inverted, this can be remedied:

% Observe that $f_{\vec A}$ is constructed as an iterated composition of the degree-$2$ map
% $\gamma$ with certain isomorphisms.  This multiplication by $2$ exactly
% ``cancels out'' the factor of $\frac12$ in our definition of the Dehn complex.
% Let us consider a small example. When $\vec A = (2n-1)$ we have
% $F_\dotp^{\vec A} = J_\dotp^{\vec A} = F_\dotp^X$.  Fix $b$, and consider the
% Dehn invariant $D_b$ corresponding to the morphism $(2n-1) \rto (b,a)$.  We have the
% following diagram:
% \begin{diagram}[5em]
%   { &  H_{2n}(S^\tw \smash J_\dotp^{(b,a)}) \\
%     H_{2n}(S^\tw \smash F_\dotp^X) & H_{2n}(S^\tw \smash F_\dotp^{(b,a)}) \\};
%   \to{2-1}{1-2}^{\hat D_b} \to{2-1}{2-2}_{H_{2n}(D_b)} \to{1-2}{2-2}^{H_{2n}(f_{(b,a))}}
% \end{diagram}
% The vertical map multiplies by $2$ and cancels out the extra factor of $\frac12$.

\begin{lemma}
  $\alpha_{\vec A} \defeq 2^{1-|\vec A|}H_{d+1}(f_{\vec A})$ gives a natural isomorphism
  $\DD^\eqvt \Rto H_{d+1}(\YY;\Zinv)$.
\end{lemma}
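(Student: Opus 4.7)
The plan is to check two things: that $\alpha_{\vec A}$ is an isomorphism on each object of $\I_d$, and that it is natural with respect to the Dehn invariants (which amounts to square commutativity for each edge of the cube $\I_d$). Equivariance then comes for free because every ingredient (the map $f_{\vec A}$, the scaling, and the maps in $\YY$ and $\DD^\eqvt$) has already been built $I(X)$-equivariantly.

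\textbf{Object-wise isomorphism.} By Lemma~\ref{lem:fratwe}, $f_{\vec A}$ is an $I(X)$-equivariant weak equivalence after inverting $2$, so $H_{d+1}(f_{\vec A};\Zinv)$ is an equivariant isomorphism. Multiplying by the unit $2^{1-|\vec A|}\in \Zinv$ preserves this. The source and target are identified with $\DD^\eqvt(\vec A)$ and $H_{d+1}(\YY(\vec A);\Zinv)$ respectively via the Künneth-style identifications already exhibited in the text (and consistent with Lemma~\ref{lem:homologyComp} in top degree), so $\alpha_{\vec A}$ indeed has the right type.

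\textbf{Naturality.} It suffices to treat a single edge $\vec A \to \vec A'$ of $\I_d$, i.e.\ a splitting of one coordinate $a_j$ into $a_j'+a_{j+1}'$ (or of $b$ into $b'+a_1'$, handled the same way). On the $\DD^\eqvt$ side the edge is $\id\otimes\cdots\otimes \hat D_{a_j'}\otimes\cdots\otimes\id$, while on the $\YY$ side it is $\id\redjoin\cdots\redjoin D_{a_j'}\redjoin\cdots\redjoin\id$. The key discrepancy, already isolated in the noncommutative triangle preceding the lemma, is that $f_{\vec A'}$ contains exactly one more composition with the degree-$2$ map $\gamma$ than $f_{\vec A}$ does, because $|\vec A'|=|\vec A|+1$. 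Hence the square
\begin{equation*}
\begin{tikzcd}[column sep=large]
H_{d+1}(S^\tw\smash J_\dotp^{\vec A}) \ar[r, "H_{d+1}(D)"] \ar[d, "H_{d+1}(f_{\vec A})"'] & H_{d+1}(S^\tw\smash J_\dotp^{\vec A'}) \ar[d, "H_{d+1}(f_{\vec A'})"] \\
H_{d+1}(S^\tw\smash F_\dotp^{\vec A}) \ar[r, "H_{d+1}(D)"'] & H_{d+1}(S^\tw\smash F_\dotp^{\vec A'})
\end{tikzcd}
\end{equation*}
commutes only up to an overall factor of $2$ (the right-hand composite picks up one extra $\gamma$), and renormalizing by $2^{1-|\vec A|}$ on the top row and $2^{1-|\vec A'|}=2^{-|\vec A|}$ on the bottom row cancels the factor exactly.

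\textbf{Main obstacle.} The delicate point is the factor-of-two bookkeeping. Concretely, I need to trace through the inductive definition of $f_{\vec A}$ as a composition of $|\vec A|-1$ instances of $\tau$, $\gamma$ and the smash-to-join map $f$, and check that (i) each $f$ and each $\tau$ contribute $\pm 1$ in top-degree homology because they are either the canonical equivalence of Lemma~\ref{lem:sma->join} or a coordinate swap on a wedge of spheres, and (ii) each $\gamma$ contributes exactly a factor of $2$, as it is the double cover $S^\tw\smash S^\tw\to S^\tw\smash S^1$ depicted in the paper. Granting this computation, the square above commutes on the nose after the $2^{1-|\vec A|}$ normalization, which proves that $\alpha$ is natural; combined with the object-wise isomorphism, this gives the claimed natural isomorphism $\DD^\eqvt \Rto H_{d+1}(\YY;\Zinv)$, completing (together with Theorem~\ref{thm:gonchcomp}'s homotopy-orbit spectral-sequence step) the geometrization of the Dehn complex.
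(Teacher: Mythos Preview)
Your argument is essentially the paper's own (the paper gives no separate proof; the lemma is just the summary of the discussion preceding it), and the substance is correct.  Two slips are worth fixing.  First, the top horizontal arrow in your square cannot be labelled $H_{d+1}(D)$: there is no simplicial Dehn map on the $J$-spaces (the paper says so explicitly).  That arrow is the classical $\hat D$ on $\DD^\eqvt$, transported to $H_{d+1}(S^\tw\smash J_\dotp^{\vec A})$ via the K\"unneth identification.  Second, the renormalization is applied to the \emph{vertical} arrows, not to the rows: the left vertical is scaled by $2^{1-|\vec A|}$ and the right by $2^{1-|\vec A'|}=2^{-|\vec A|}$, so the right picks up an extra $\tfrac12$ that cancels the extra $\gamma$ in $f_{\vec A'}$.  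With those corrections your naturality check goes through, and the object-wise isomorphism via Lemma~\ref{lem:fratwe} is exactly right.
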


The proof is now complete. \qed

\section{Large cubes and the Dehn complex}
\label{sec:large}

To construct the Dehn complex, Goncharov essentially starts with the groups
$\P(X,1)$, takes their coinvariants with respect to a group action (or, in other
words, a homology group), constructs a new differential to make these groups
into a chain complex, and then studies the homology of this new chain complex.
This produces an object which is difficult to analyze and does not seem to fit
into any of the standard methods for taking the homology of homologies.  In
light of Example~\ref{example:cube}, this chain complex can be thought of as the
total homotopy cofiber of a cube of groups.  The goal of this section is to
compare it with the total homotopy cofiber $(\sY^X)_{hI(X)}$ of derived Dehn
invariants defined above.  It turns out that the homology of $(\sY^X)_{hI(X)}$
can be described in two ways: one by directly analyzing its homotopy type, and
one via a spectral sequence of Munson and Volic \cite[Proposition
9.6.14]{munsonvolic}.  The comparison between these computations is incredibly
fruitful.

For the rest of this section, the neat geometry $X$ over $k$ has dimension
$d = 2n$ or $2n-1$ (so that $n = \lfloor \frac{d-1}{2} \rfloor$).

\begin{theorem} \label{thm:reallycool} After inverting $2$ there is an
  equivalence
  \[(\sY^X)_{hI(X)} \simeq (S^\tw \smash S^{n-1})_{hI(X)}.\] Here $I(X)$
  acts by $\det$ on the $S^\tw$-coordinate and trivially on the
  $S^{n-1}$-coordinate.
\end{theorem}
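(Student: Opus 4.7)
The plan is to exploit two structural facts. First, homotopy coinvariants commute with homotopy colimits (both are bar constructions), so
\[(\sY^X)_{hI(X)} \simeq \tcofib\bigl(\vec A \mapsto (S^\tw \smash F_\dotp^{\vec A})_{hI(X)}\bigr),\]
reducing the problem to identifying this total homotopy cofiber. Second, Proposition~\ref{prop:singleH} combined with Lemma~\ref{lem:sma->join} (which identifies the reduced join with a suspended smash product) implies that each $F_\dotp^{\vec A}$ has reduced homology concentrated in a single top degree, so up to weak equivalence it is a wedge of spheres indexed by the orthogonal decompositions $X = W \oplus^\perp \bigoplus^\perp V_j$ appearing in Definition~\ref{def:flagspace}.

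First I would compute each vertex. Lemma~\ref{lem:homologyComp} expresses the homology of $(S^\tw \smash F_\dotp^{\vec A})_{hI(X)}$ as a tensor product of shifts of group homologies $H_\ast(I(W); \P(W,1)) \otimes \bigotimes_j H_\ast(I(V_j); \P(V_j,1))$; the top-degree part is precisely a term of the equivariant Dehn cube $\DD^\eqvt$. Second, I would run the spectral sequence for the total homotopy cofiber of the cube (\cite[Proposition~9.6.14]{munsonvolic}): its lowest nonzero row is, by construction and by Theorem~\ref{thm:gonchcomp}, the Dehn complex $\P_\ast(X)$, while higher rows come from the group-homological contributions via the K\"unneth formula applied vertex-by-vertex.

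The central step, and the main obstacle, is showing that all of these rows collapse together in the total cofiber to give exactly $H_{\ast-n}(I(X); \Zinv^\tw)$, which is the homology of $(S^\tw \smash S^{n-1})_{hI(X)}$ by the homotopy orbit spectral sequence applied with trivial action on $S^{n-1}$. My approach would be to construct an explicit equivariant comparison map
\[(S^\tw \smash S^{n-1})_{hI(X)} \longrightarrow (\sY^X)_{hI(X)}\]
built from a choice of maximal flag (an apartment) in $X$: maximal flags form a single $I(X)$-orbit, their common stabilizer is a signed permutation group whose sign character absorbs the twist on $S^\tw$ (after inverting $2$), and the image of each apartment in $F_\dotp^X$ is a single top-dimensional sphere whose behavior under the Dehn invariants is essentially combinatorial. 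One then verifies this is an equivalence by comparing spectral sequences --- both sides have homotopy orbit spectral sequences collapsing into a single column, and the induced map on $E_2$-pages is a direct check using the identification of the lowest row with the Dehn complex.

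The inversion of $2$ appears for two reasons: the degree-two map $\gamma$ from the proof of Theorem~\ref{thm:gonchcomp} enters when comparing $S^\tw \smash S^\tw$ with $S^\tw \smash S^1$, and the $\Z/2$-action on $S^\tw$ makes the sign character of the stabilizer of a flag become invertible only after inverting $2$. Separating the two cases $d = 2n$ and $d = 2n-1$ is bookkeeping: in the former, Proposition~\ref{prop:evencontr} ensures that the ``missing'' dimensions vanish after inverting $2$, so the structure of $\I_d$ already reflects the correct cube.
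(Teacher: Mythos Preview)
Your proposal correctly identifies the first structural move --- commuting homotopy coinvariants past the total cofiber --- but the ``central step'' is where the argument breaks down. You propose to build a comparison map from an apartment and then verify it is an equivalence by matching spectral sequences, but neither half of this is carried out. The apartment gives a sphere inside $F_\dotp^X$, not inside $\sY^X = \tcofib \YY$; you would need to explain why that sphere survives the entire cube of Dehn quotients, and you have not done so. Worse, your proposed verification is circular: you say ``both sides have homotopy orbit spectral sequences collapsing into a single column,'' but for $(\sY^X)_{hI(X)}$ the homotopy orbit spectral sequence requires knowing the $I(X)$-equivariant homotopy type of $\sY^X$, which is precisely the content of the theorem. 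Matching only the lowest row of the Munson--Volic spectral sequence (the Dehn complex) does not control the higher rows.

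The paper's proof avoids all of this by a combinatorial trick you are missing entirely. It enlarges $\I_d$ to the full cube $\hat\I_d$ (allowing odd $a_j$), and observes that the Dehn map $D_I$ is literally isomorphic to the quotient map $F_\dotp^X \to F_\dotp^X/\bigcup_{i\in I} N_{\{i\}}F_\dotp^X$, where $N_{\{i\}}F_\dotp^X$ is the subspace of flags omitting dimension $i$. A general lemma (Lemma~\ref{lem:quottcofib}) then identifies the total cofiber of any such ``cube of quotients by a union of subspaces'' as a suspension of the \emph{intersection} of those subspaces. Over the full cube $\hat\I_d$ this intersection is $N_{\{0,\ldots,d-1\}}F_\dotp^X \cong S^0$ (only the constant flag $X=\cdots=X$ survives), giving $Z^\eqvt \simeq S^\tw \smash S^d$ equivariantly and \emph{integrally}. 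The passage back down to $\I_d$ is then handled by noting that after taking $hI(X)$ all odd-direction vertices are contractible (Proposition~\ref{prop:evencontr}), so the extra directions contribute only suspensions; this is the only place $2$ must be inverted. None of these three ingredients --- the enlarged cube, the quotient-cube identification, or the intersection lemma --- appears in your outline, and they are what makes the argument go through.
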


The proof of this theorem is deferred to Section~\ref{sec:twoproofs}.  The key
to this theorem is the observation that homotopy coinvariants are both homotopy
colimits and therefore \emph{commute}; thus
\[(\sY^X)_{hI(X)} = (\tcofib \YY)_{hI(X)} \simeq \tcofib (\YY_{hI(X)}) .\] This
means that the simple combinatorial nature of $\YY$ can be played against the
benefits of taking homotopy coinvariants. This is also where the benefits of
constructing an \emph{equivariant} Dehn invariant comes into play: if an
equivariant model for the Dehn invariant did not exist it would be impossible to
move the homotopy coinvariants outside of the total cofiber.

\begin{theorem} \label{thm:identGonch}
  Write $\Zinv^\tw$ for a copy of $\Zinv$ with $I(X)$ acting on it via multiplication by
  the determinant. For all $i$,
  \[\tilde H_i\left(\sY^X_{hI(X)};\Zinv\right) \cong H_{i-n}\left(I(X); \Zinv^\tw\right);\]
  in particular, when $i<n$ both are zero.
\end{theorem}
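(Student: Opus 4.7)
The plan is to deduce the theorem as a direct consequence of Theorem~\ref{thm:reallycool} combined with the homotopy orbit spectral sequence. First I would apply Theorem~\ref{thm:reallycool} to replace $(\sY^X)_{hI(X)}$ with $(S^\tw \smash S^{n-1})_{hI(X)}$ after inverting $2$, reducing the problem from a difficult combinatorial object (a large total cofiber) to the homotopy coinvariants of a sphere with an almost-trivial action.

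Next I would identify the $I(X)$-equivariant homology of $S^\tw \smash S^{n-1}$. Since $S^\tw$ is a model of $S^1$ whose reduced homology is concentrated in degree $1$ with $I(X)$ acting by $\det$, and $S^{n-1}$ carries trivial action with reduced homology concentrated in degree $n-1$, the Künneth theorem gives
\[
\tilde H_q(S^\tw \smash S^{n-1}; \Zinv) \cong
\begin{cases}
\Zinv^\tw & q = n \\
0 & q \neq n.
\end{cases}
\]
Then I would feed this into the homotopy orbit spectral sequence (Proposition~\ref{prop:hoss}):
\[
E^2_{p,q} = H_p\bigl(I(X); \tilde H_q(S^\tw \smash S^{n-1}; \Zinv)\bigr) \Longrightarrow \tilde H_{p+q}\bigl((S^\tw \smash S^{n-1})_{hI(X)}; \Zinv\bigr).
\]
Because the coefficient system is concentrated in the single row $q = n$, the spectral sequence collapses on the $E^2$-page, yielding
\[
\tilde H_i\bigl((S^\tw \smash S^{n-1})_{hI(X)}; \Zinv\bigr) \cong H_{i-n}(I(X); \Zinv^\tw).
\]
Combined with Step~1 this gives the desired isomorphism, and the vanishing for $i < n$ is automatic since group homology is zero in negative degrees.

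The genuine obstacle in this chain of reasoning is Theorem~\ref{thm:reallycool} itself, whose proof is postponed to Section~\ref{sec:twoproofs}. Once that equivalence is granted, the remaining argument is essentially a formal spectral-sequence collapse, and the only care needed is in tracking the twist: the determinant action on $\tilde H_1(S^\tw)$ is what produces the coefficient system $\Zinv^\tw$ appearing on the right-hand side of the theorem, and this twist is precisely what makes the resulting statement match the equivariant Dehn-complex computation from the previous section.
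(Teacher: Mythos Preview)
Your proposal is correct and follows essentially the same route as the paper: invoke Theorem~\ref{thm:reallycool} to replace $(\sY^X)_{hI(X)}$ by $(S^\tw \smash S^{n-1})_{hI(X)}$, then observe that the homotopy orbit spectral sequence collapses because the reduced homology of $S^\tw \smash S^{n-1}$ is concentrated in degree $n$ with coefficient module $\Zinv^\tw$. The paper's proof is slightly terser but identical in content.
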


\begin{proof}
  By Theorem~\ref{thm:reallycool} there is an isomorphism
  $H_i(\sY^X_{hI(X)}; \Zinv) \cong H_i((S^\tw \smash S^{n-1})_{hI(X)}; \Zinv)$.
  By the homotopy orbit spectral sequence (Proposition~\ref{prop:hoss}), and
  since $\Zinv^\tw \cong \tilde H_1(S^\tw;\Zinv)$,
  \[\tilde H_i\left((S^\tw \smash S^{n-1})_{hI(X)};\Zinv\right) \cong H_{i-n}\left(I(X);
    \tilde H_n\left(S^\tw\smash S^{n-1};\Zinv\right)\right) \cong H_{i-n}\left(I(X);\Zinv^\tw\right).\]
\end{proof}

The spectral sequence for the total homotopy colimit of a cube proved in
Proposition~\ref{prop:sstcofib} can now be used to to connect the homotopy type
of $\sY^X_{hI(X)}$ to the Dehn complex.  In this case the spectral sequence becomes
\begin{equation} \label{eq:gonchSS}
  E^1_{p,q} = \bigoplus_{\vec A = (b,a_1,\ldots,a_{n-1-p})} \tilde
  H_q\left(\YY(\vec A)_{hI(X)};\Zinv\right) \Rto \tilde H_{p+q}\left(\sY^X_{hI(X)};\Zinv\right).
\end{equation}
Since $\YY(\vec A)$ has no nonzero homology below degree $d+1$ this also holds
for $\YY(\vec A)_{hI(X)}$.
Thus all entries in the spectral sequence with $q < d+1$ are $0$.  When
$q = d+1$ the row of the spectral sequence is exactly the Dehn complex.

Below is a picture of the spectral sequence for
$\tilde H_\ast (\sY^X_{I(X)};\Zinv)$ in (\ref{eq:gonchSS}). The red indicates
the non-zero entries in $E^1$. The Dehn complex is the base complex of
$\mathbf{Y}$; it is the thick blue line sitting in the row where $q=d+1$. 

\begin{equation} \label{diag:ss} 
\begin{tikzpicture}[yshift=-5em, baseline]
   \node[below] at (3,0) {};
   \node[left] at (0,3) {$d+1+n$};
   \node[below] at (2,0) {$d+1$};
   \node[left] at (0,2) {$d+1$};

   \draw (2, 0) -- (0, 2);
   \draw (3, 0) -- (0, 3);
   \fill[blue] (0, 2) circle (2pt);
   \fill[blue] (1,2) circle (2pt);
   \draw[blue, very thick] (0,2) -- (1, 2);
   \draw[red] (0, 2) -- (0, 4);
   \draw[red] (1, 2) -- (1, 4);
   \fill [red, opacity=.2] (0,2) rectangle (1,4);
   \draw [<->,thick] (0,4) node (yaxis) [above] {$q$}
   |- (4,0) node (xaxis) [right] {$p$};

   \node[right] at (2.5,2.5) {$\displaystyle{E^1_{p,q} =
       \bigoplus_{\substack{\vec A \in \I_d\\ \mathrm{len}(\vec A)=n-p}} \tilde
       H_q\left(\YY(\vec A)_{hI(X)};\Zinv\right) \Rto \tilde
       H_{p+q-n}\left(I(X);\Zinv^\tw\right)}.$};
   \node[right] at (6,1.5) {$d_1:E^1_{p,q} \to E^1_{p-1,q}$};
   \node[right] at (6,1) {$d_r:E^r_{p,q} \to E^r_{p-r,q-1+r}$};
   \node[right] at (6,0.5) {$\mathrm{len}(b,a_1,\ldots,a_i) = i+1$};
 \end{tikzpicture}
\end{equation}

\begin{definition} \label{def:epsilon} Let $F^*_{p,q}$ be a first-quadrant
  homologically-graded spectral sequence with lowest nonzero row at $p=m$,
  converging to the sequence of groups $G_{p+q}$.  The \emph{base complex} of
  the spectral sequence is the complex $F^1_{*,m}$ with differential $d^1$.  The
  induced homomorphism $\theta_n:G_n \rto F^1_{n-m,m}$ is called the \emph{projection to
    the base}.\footnote{This is also sometimes called an ``edge homomorphism in
    the spectral sequence.''}
\end{definition}

\begin{theorem} \label{thm:projbase}
  Projection to the base gives a homomorphism
  \[\theta_m: H_{n+m}(I(X); \Zinv^\sigma) \rto H_m\P_*(X).\]
  This homomorphism is surjective if and only if all differentials
  $d^r: E^r_{m,d+1} \rto E^r_{m-r,d+r}$ for $r \geq 2$ are zero.  It is
  injective if and only if $E^\infty_{p,m-p} = 0$ for all $p > d+1$.
\end{theorem}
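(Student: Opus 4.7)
My plan is to read Theorem~\ref{thm:projbase} off the spectral sequence (\ref{eq:gonchSS}) guaranteed by Proposition~\ref{prop:sstcofib}. Two inputs do all the real work: Lemma~\ref{lem:homologyComp}, which shows the spectral sequence lives in $q \geq d+1$ and identifies the base row, and Theorem~\ref{thm:identGonch}, which identifies the abutment with $H_{*-n}(I(X); \Zinv^\tw)$.

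First I would verify that $E^2_{*, d+1}$ is the Dehn complex $\P_*(X)$. By Lemma~\ref{lem:homologyComp}, $\tilde H_q(\YY(\vec A)_{hI(X)}; \Zinv)$ vanishes for $q < d+1$ and equals $\P(X^b) \otimes \bigotimes_j \P(S^{a_j}) = \DD(\vec A)$ for $q = d+1$. The $d^1$ along this row is induced by the maps in $\YY$, namely the derived Dehn invariants $D_i$; applying $H_{d+1}((-)_{hI(X)})$ to the strictly commuting cube of $D_i$'s from Diagram~(\ref{diag:derdehnsquare}) and invoking Theorem~\ref{thm:RDehneq} recovers $\DD$ on the nose, so $E^2_{m, d+1} = H_m \P_*(X)$.

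Next I would construct $\theta_m$ as the standard edge map. Since no $d^r$ with $r \geq 2$ enters $E^r_{m, d+1}$ (any such source would sit in $q < d+1$, which is zero), the tower of pages produces an inclusion $E^\infty_{m, d+1} \hookrightarrow E^2_{m, d+1} = H_m\P_*(X)$. Dually, because the whole spectral sequence sits in $q \geq d+1$, the convergent filtration on $\tilde H_{m+d+1}(\sY^X_{hI(X)}; \Zinv)$ has $F_m$ equal to the entire group, producing a surjection onto the top quotient $F_m / F_{m-1} = E^\infty_{m, d+1}$. Composing with the identification $\tilde H_{m+d+1}(\sY^X_{hI(X)}; \Zinv) \cong H_{n+m}(I(X); \Zinv^\tw)$ from Theorem~\ref{thm:identGonch} defines
\[\theta_m \colon H_{n+m}(I(X); \Zinv^\tw) \twoheadrightarrow E^\infty_{m, d+1} \hookrightarrow H_m\P_*(X).\]

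The two criteria are then automatic. In the factorization the first map is always surjective and the second is always injective, so $\theta_m$ is surjective iff the inclusion $E^\infty_{m, d+1} \hookrightarrow E^2_{m, d+1}$ is an equality, equivalently every outgoing differential $d^r|_{E^r_{m, d+1}}$ with $r \geq 2$ vanishes; and $\theta_m$ is injective iff $F_{m-1} = 0$, which by convergence translates into the vanishing of $E^\infty_{p, q}$ for each $(p, q)$ on the diagonal $p + q = m + d + 1$ with $p < m$, matching the stated condition after reindexing. The main obstacle I anticipate is purely bookkeeping --- matching the total degree $m + d + 1$ in $\sY^X$ with the degree $n + m$ in $H_*(I(X); \Zinv^\tw)$ and confirming that the base-row identification really recovers $\P_*(X)$ rather than a shifted or twisted variant --- since beyond the two theorems already in hand there is no further geometry or homotopy theory to bring in.
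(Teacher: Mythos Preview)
Your proposal is correct and follows essentially the same route as the paper: both arguments are the standard edge-homomorphism analysis for a first-quadrant spectral sequence concentrated in $q \geq d+1$, factoring $\theta_m$ as a surjection onto $E^\infty_{m,d+1}$ followed by the inclusion $E^\infty_{m,d+1} \hookrightarrow E^2_{m,d+1}$. The paper's proof is terser because the identification of the base row with $\P_*(X)$ is carried out separately (Theorem~\ref{thm:gonchcomp} is the precise reference, rather than Theorem~\ref{thm:RDehneq} which only handles a single $D_i$), and Definition~\ref{def:epsilon} packages the construction of $\theta_m$; your proposal simply folds these ingredients into the proof itself.
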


\begin{proof}
  Projection to the base is surjective onto $E^\infty_{m,d+1}$.  As the base
  complex is the lowest nonzero row, the spectral sequence contains no
  differentials \emph{into} that row; thus, $E^\infty_{p,d+1}$ is a subgroup of
  $E^2_{p,d+1}$ for all $p$.  This subgroup is exactly the intersection of all
  of the kernels of the $d^r$, and is therefore equal to the whole group if and
  only if all of these differentials are $0$.  The kernel of projection to the
  base is exactly the subgroup given by all of the terms in the $m$-th diagonal
  above the base complex on the $E^\infty$-page; thus the map is injective if
  and only  the terms in the diagonal are $0$.
\end{proof}

In the case of this spectral sequence, it is possible to give an explicit
description of $\theta_{n-1}$:
\begin{lemma} \label{lem:edge-desc} Let $k = \R$.  In the spectral sequence
  (\ref{eq:gonchSS}), the map
  \[\theta_{n-1}: H_{d}(I(X);\Zinv^\sigma) \to H_{n-1}\P_*(X)\]
  is induced by the map taking a chain $(g_1,\ldots,g_{d})$ to
  the scissors congruence class of the $d$-simplex with vertices
  \[\big\{ x_0, g_{d}x_0, g_{d}g_{d-1}x_0,\ldots,g_{d}\cdots g_1 x_0
    \},\] (for any chosen point $x_0\in X$) with the sign given by
  $\prod_{i=1}^{d} \det(g_i)$.
  %the sum
  % \[m\left(\prod_{i=1}^{2n-1} \sgn(g_i)\right)\sum_{\sigma\in \Aut(\{0,\ldots,2n-1\})}\sgn(\sigma)
  %   \left[V_{\sigma,0} \subset V_{\sigma,1} \subsets V_{\sigma,2n-1}\right].\]
  % Here we define
  % \[V_{\sigma,i} =
  %   \rspan(h_{\sigma(0)}x_0,h_{\sigma(1)}x_0,\ldots,h_{\sigma(i)}x_0),\]
  % with $h_{2n-1} = 1$, $h_i = g_{2n-1}\cdots g_{2n-1-i}$, and $x_0$ any fixed
  % point in $X$.
\end{lemma}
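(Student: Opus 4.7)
The plan is to identify $\theta_{n-1}$ with the composite obtained by collapsing $\sY^X = \tcofib\,\YY$ onto the suspension of its initial cube vertex and then invoking Theorem~\ref{thm:f<>p}. Explicitly, there is a canonical map $\sY^X \rto \Sigma^{n-1}\YY((d))$ (with $\YY((d)) = S^\tw \smash F_\dotp^X$ and $(d)$ the initial object of $\I_d$) which realizes the edge of the spectral sequence (\ref{eq:gonchSS}) at the corner $(n-1,d+1)$ of the base row. Applying $(-)_{hI(X)}$ and $H_{d+n}$ yields
$$H_{d+n}\bigl(\sY^X_{hI(X)};\Zinv\bigr) \rto H_{d+1}\bigl(\YY((d))_{hI(X)};\Zinv\bigr) \cong \P(X),$$
and $\theta_{n-1}$ is the result of further projecting onto $H_{n-1}\P_*(X)$.

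I would then realize this composite chain-level. Under the isomorphism of Theorem~\ref{thm:identGonch}---the edge of a collapsing homotopy orbit spectral sequence for $\sY^X_{hI(X)}$---a bar cycle $(g_1,\ldots,g_d)$ is the chain obtained by smashing the bar simplex with a fundamental class of $\tilde H_n(\sY^X;\Zinv^\tw)$. Given a basepoint $x_0 \in X$, the standard bar-to-orbit-simplex assignment sends $(g_1,\ldots,g_d)$ to the tuple
$$\bigl( x_0,\ g_d x_0,\ g_d g_{d-1} x_0,\ \ldots,\ g_d\cdots g_1 x_0 \bigr),$$
which under Theorem~\ref{thm:f<>p}---writing such a simplex as the signed permutation sum of flags in $F_\dotp^X$---produces precisely the scissors congruence class of this simplex. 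The factor $\prod_{i=1}^{d}\det(g_i)$ emerges because the twist in $\Zinv^\tw$ is realized by the $I(X)$-action on $S^\tw$, which is the determinant representation by construction.

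The main obstacle is making this chain-level comparison honest: one needs a bicomplex model for $\sY^X_{hI(X)}$ in which the cube direction and the bar direction for the group action are kept explicitly separate, so that the collapse-to-initial-vertex map can be composed transparently with the bar-to-simplex construction without unwinding the abstract zig-zag of Theorem~\ref{thm:reallycool}. In such a model both relevant signs---the permutation sign from Theorem~\ref{thm:f<>p} and the determinant sign from the $I(X)$-action on $S^\tw$---combine directly into the product $\prod_i \det(g_i)$, and the compatibility with the Dehn differential leaving $\YY((d))$ is a direct check on the bar-to-simplex formula.
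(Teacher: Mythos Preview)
Your outline is correct and matches the paper's approach: the edge map is identified with the collapse $\tcofib\YY \to \Sigma^{n-1}\YY((d))$, this is modeled by a bisimplicial set (bar direction for $I(X)$, flag direction for $F_\dotp^X$), and Theorem~\ref{thm:f<>p} converts the resulting tuple of points into a scissors congruence class with the determinant sign coming from $S^\tw$. The ``main obstacle'' you flag---making the chain-level comparison honest---is exactly what the paper carries out: it writes down the relevant double complex explicitly and exhibits, by an inductive construction of auxiliary chains $\alpha^\lambda$, an explicit homology between the bar cycle $(g_1,\ldots,g_d)\{\}$ in the leftmost column and the signed flag sum in the bottom row (this is packaged as a separate technical lemma, Lemma~\ref{lem:tech-comp}).
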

The proof of this lemma is technical and not illuminating, so it is postponed to
Section~\ref{sec:twoproofs} (Lemma~\ref{lem:edge-desc-k}); in fact, in that
section it is proved over any field $k$.  We state it here for the special case
as it makes the result easier to describe and this is the only case of interest
in the current paper.

Directly from the spectral sequence it is possible to prove minor
generalizations (to all fields instead of algebraically closed ones, and
including all $d$ not just odd ones, and removing rational coefficients) of the
following results of Cathelineau:
\begin{theorem}[{Generalization of \cite[Thm. 10.1.1]{cathelineau_bar}}] \label{thm:cathelineau}
  For $d=2n$ or $2n-1$ with $n\geq 2$, and any field $k$ of characteristic $0$,
  \[H_i(I(X);\Zinv^\sigma) = 0 \qquad \hbox{if }i < n\]
  and
  \[H_{n}(I(X);\Zinv^\sigma) \cong H_0\P_*(X^d).\]
  % \begin{align*}
  %   H_0 (\P_* (S^d)) &\cong H_n (O(d+1); \Zinv^\tw) &  H_0 (\P_*
  %                                                         (\cH^d)) &\cong
  %                                                                         H_{n}(O(1,d),
  %                                                                         \Zinv^\tw).
  % \end{align*}
\end{theorem}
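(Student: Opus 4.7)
The plan is to read both claims directly off the spectral sequence (\ref{eq:gonchSS}), whose base row $q=d+1$ is the Dehn complex $\P_*(X)$. By Lemma~\ref{lem:homologyComp} every $\YY(\vec A)_{hI(X)}$ has vanishing reduced homology below degree $d+1$, so $E^1_{p,q}$ is supported in the rectangle $0 \leq p \leq n-1$, $q \geq d+1$. Moreover, by Theorem~\ref{thm:identGonch} the sequence converges to $H_{p+q-n}(I(X);\Zinv^\tw)$.

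For the vanishing statement, fix $i<n$. The group $H_i(I(X);\Zinv^\tw)$ is the abutment on the diagonal $p+q = i+n$. Since $d \geq 2n-1$ we have $i+n < 2n \leq d+1$, so any entry on this diagonal with $q \geq d+1$ would require $p<0$, which lies outside the support. Hence the diagonal is identically zero on all pages and $H_i(I(X);\Zinv^\tw)=0$.

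For the isomorphism in the case $d=2n-1$, the relevant diagonal is $p+q=2n=d+1$ and the only entry in the support is the corner $(0,d+1)$. For $r\geq 2$ the outgoing differential $d^r\colon E^r_{0,d+1} \to E^r_{-r,d+r}$ has target in $p<0$ and vanishes, while the incoming differential $d^r\colon E^r_{r,d+2-r} \to E^r_{0,d+1}$ has source in $q<d+1$ and vanishes. Therefore $E^\infty_{0,d+1} = E^2_{0,d+1} = H_0(\P_*(X))$, the leftmost homology of the base row; this is exactly the edge homomorphism $\theta_0$ of Theorem~\ref{thm:projbase}, now seen to be an isomorphism.

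The main obstacle is the parity mismatch in the case $d=2n$, where $d+1=2n+1$ places the diagonal $p+q=2n$ strictly below the support of the spectral sequence. The argument above then yields only $H_n(I(X);\Zinv^\tw)=0$, and to finish the theorem one must independently verify that $H_0(\P_*(X^{2n}))=0$. For spherical $X$, Proposition~\ref{prop:evencontr} forces $\P(X^b)=0$ whenever $b>0$ is even, collapsing the Dehn cube to a smaller subcube whose total complex admits a direct analysis; the hyperbolic case would require the analogous input for even-dimensional hyperbolic scissors congruence, or else a finer examination of the full spectral sequence.
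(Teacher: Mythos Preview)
Your argument is precisely the paper's: both read the two claims directly off the spectral sequence (\ref{diag:ss}), using that $E^1$ is supported in $p\geq 0$, $q\geq d+1$, so the lowest nonvanishing diagonal is $p+q=d+1$ and contains only the corner $E^2_{0,d+1}=H_0\P_*(X)$. For $d=2n-1$ this immediately gives both statements, and your write-up matches the paper's one-sentence proof.

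Your concern about the case $d=2n$ is well taken, and the paper's own proof is no more detailed here: it simply asserts that the $p+q=d+1$ diagonal carries $H_0\P_*(X^d)$ without treating the two parities separately. As you correctly observe, for $d=2n$ the diagonal $p+q=2n$ lies strictly below the support, so the spectral sequence yields $H_n(I(X);\Zinv^\sigma)=0$, while the corner term identifies $H_0\P_*(X^{2n})$ with $H_{n+1}(I(X);\Zinv^\sigma)$ rather than $H_n$. This points to an indexing inconsistency in the paper's conventions (indeed, the formula ``$n=\lfloor\frac{d-1}{2}\rfloor$'' stated at the start of Section~\ref{sec:large} already fails for both $d=2n-1$ and $d=2n$, giving $n-1$ in each case). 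You have not missed any idea present in the paper's proof; what remains is a bookkeeping ambiguity in the paper's treatment of even $d$, not a genuine gap in your reasoning.
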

\begin{proof} Consider the spectral sequence (\ref{diag:ss}).  Everything below
  the $d+1$-st diagonal is $0$, which implies the first claim.  In the $p+q = d+1$-st
  diagonal there is exactly one nonzero entry: $H_0(\P_*(X^d))$.
\end{proof}

For the next theorem it is unfortunately necessary to rationalize, rather than
simply inverting $2$.  As the rationalization is used only in computing the
group homology of $SO(2)$, in certain cases with good control over the torsion
in this group it may be possible to get away with a milder localization.

\begin{theorem}[{Generalization of \cite[Proposition 6.2.2]{cathelineau_proj}}]
  Let $X$ be a neat geometry over a field $k$ of characteristic $0$.  Then
  \[H_1(I(X^1), H_1(F^{X^1}_\dotp)^\sigma) = 0.\] Consequently, for $d = 2n$ or $2n-1$, with
  $n \geq 2$,
  \[H_{n+1}(I(X);\Q^\sigma) \cong H_1(\P_*(X^d)_\Q).\]
\end{theorem}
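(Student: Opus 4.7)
The plan is to derive the main isomorphism by applying Theorem~\ref{thm:projbase} with $m = 1$ to the rationalized spectral sequence (\ref{eq:gonchSS}); the preliminary vanishing $H_1(I(X^1), H_1(F^{X^1}_\bullet)^\sigma)_\Q = 0$ is precisely the input needed to force injectivity.

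For the preliminary lemma, I would first identify $H_1(F^{X^1}_\bullet) \cong \P(X^1, 1)$ with the augmentation ideal $I \subseteq \Z[X^1]$: the only non-basepoint non-degenerate simplices of $F^{X^1}_\bullet$ are the $0$-simplex $X^1$ and the $1$-simplices $[x \subsetneq X^1]$ for $x \in X^1$, so the identification is immediate from the reduced chain complex. Let $SO \leq I(X^1)$ denote the determinant-one subgroup; since $SO$ acts simply transitively on $X^1$, we obtain $\Z[X^1] \cong \Z[SO]$ as $SO$-modules (not as $I(X^1)$-modules). The short exact sequence $0 \to I^\sigma \to \Z[SO]^\sigma \to \Z^\sigma \to 0$ of $I(X^1)$-modules, combined with the freeness of $\Z[SO]$ as an $SO$-module, gives $H_1(SO, I^\sigma) \cong H_2(SO, \Z^\sigma)$. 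Because the $\sigma$-twist is invisible to $SO$, this equals $H_2(SO, \Z) = \Lambda^2(SO)$, since $SO$ is abelian. I would then run the Lyndon--Hochschild--Serre spectral sequence for $1 \to SO \to I(X^1) \to \Z/2 \to 1$, which rationally collapses to its $q=0$ column, reducing the question to the $\Z/2$-coinvariants of $\Lambda^2(SO)_\Q$. Conjugation by a reflection inverts every element of $SO$, hence acts as $(-1)^2 = +1$ on $\Lambda^2(SO)$, while the $\sigma$-twist contributes the missing factor of $-1$; the net action is $-1$ and the coinvariants vanish.

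For the consequence, apply Theorem~\ref{thm:projbase} at $m = 1$ to the rationalized spectral sequence. Surjectivity of $\theta_1$ is automatic: the potentially obstructing differentials $d^r \colon E^r_{1,d+1} \to E^r_{1-r, d+r}$ for $r \geq 2$ land in negative columns and so vanish trivially. For injectivity, Theorem~\ref{thm:projbase} requires every $E^\infty_{p,q}$ strictly above the base row on the relevant total-degree diagonal to vanish. By Lemma~\ref{lem:homologyComp}, each such $E^1_{p,q}$ decomposes as a direct sum of Künneth-type tensor products $H_{k_0}(I(X^b), \P(X^b, 1)^\sigma)_\Q \otimes \bigotimes_j H_{k_j}(O(a_j), \P(S^{a_j-1}, 1)^\sigma)_\Q$ in which exactly one of the $k$'s equals $1$ and the rest are $0$. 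In the length-$n$ summand contributing to the critical diagonal, the shape of $\vec A$ forces every degree-$1$ factor to be either $H_1(I(X^1), \P(X^1, 1)^\sigma)_\Q$ or $H_1(O(2), \P(S^1, 1)^\sigma)_\Q = H_1(I(S^1), \P(S^1, 1)^\sigma)_\Q$; both vanish by the preliminary lemma applied to $X^1 = \cH^1$ and $X^1 = S^1$ respectively. Hence $E^1$ already vanishes on the critical diagonal above the base row, giving injectivity of $\theta_1$.

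The main obstacle is the preliminary lemma and in particular the careful tracking of the $\Z/2$-action: one must establish that conjugation by a reflection truly inverts each rotation (so that $\Lambda^2$ sees $(-1)^2 = +1$) and correctly combine this with the $\sigma$-twist contribution, verifying the net action is $-1$ rather than $+1$ on $\Lambda^2_\Q(SO_\Q)$ (which is itself nonzero, since $(\R/\Z)_\Q \cong \R/\Q$ is a large $\Q$-vector space). Once this is pinned down, the consequence is essentially bookkeeping with Lemma~\ref{lem:homologyComp} and the identification of the relevant small-dimensional isometry groups.
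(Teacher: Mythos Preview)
Your overall strategy matches the paper's closely: both reduce the first statement to the $\Z/2$-coinvariants of $H_2(I^+(X^1),\Q)\cong\Lambda^2(I^+(X^1)_\Q)$ via a dimension shift, then analyze the sign of the conjugation action combined with the $\sigma$-twist. Your route to the dimension shift---the augmentation-ideal short exact sequence---is slightly more direct than the paper's bisimplicial argument, and your sign analysis is correct. The deduction of the consequence from Lemma~\ref{lem:homologyComp} is also essentially the paper's argument.

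There is, however, one genuine error. The group $SO = I^+(X^1)$ does \emph{not} act simply transitively on the points of $X^1$: the central element $-I\in SO$ fixes every line, so every point stabilizer contains $\{\pm I\}\cong\Z/2$. (Over general $k$ the action need not even be transitive: two anisotropic lines lie in the same $O(q)$-orbit only when their restricted forms differ by a square.) Consequently $\Z[X^1]$ is not free as an $SO$-module, and your integral claim $H_1(SO,I^\sigma)\cong H_2(SO,\Z^\sigma)$ is not justified as written. The fix is easy and keeps your argument intact: each point stabilizer is the finite $2$-group $\{\pm I\}$, so by Shapiro's lemma $H_*(SO,\Q[X^1])$ vanishes in positive degrees, and the long exact sequence still yields $H_1(SO,I_\Q)\cong H_2(SO,\Q)$ rationally. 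Since the statement (and the paper) ultimately works over $\Q$, this suffices. You should replace ``simply transitively'' with the observation that all stabilizers are $2$-groups and run the argument rationally from that point.
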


\begin{proof}
  Via the spectral sequence in (\ref{diag:ss}), it suffices to check that
  $E^1_{0,d+2} = 0$.  This term is a direct sum of tensor products, where each
  tensor product contains exactly one of the groups
  \[H_1(I(\cH^1), H_1(F^{\cH^1}_\dotp)^\sigma) \qquad\hbox{or}\qquad H_1(I(S^1),
    (F^{S^1}_\dotp)^\sigma).\] When $X$ is spherical, only terms of the second
  sort will arise.  Thus the second statement in the theorem follows directly
  from the first.

  Let $I^+(X^1)$ be the subgroup of $I(X^1)$ of those elements with determinant
  $1$.  The Lyndon--Hochschild--Serre spectral sequence for the extension
  \[I^+(X) \rto I(X) \rto \Z/2.\]
  has
  \[E^2_{p,q} = H_p(\Z/2, H_q(I^+(X), H_1(F^{X^1}_\dotp)^\sigma)) \Rto
    H_{p+q}(I(X), H_1(F^{X^1}_\dotp)^\sigma).\] In particular, since $H_1(F^{X^1}_\dotp)$
  is $2$-divisible, $E^1_{p,q} = 0$ whenever $p > 0$.  Thus
  \begin{equation} \label{eq:Sttogp} H_n(I(X),H_1(F^{X^1}_\dotp)^\sigma) \cong
    H_0(\Z/2, H_n(I^+(X^1), H_1(F^{X^1}_\dotp)^\sigma)).
  \end{equation}
  
  We turn our focus to computing
  $H_n(I^+(X^1), H_1(F^{X^1}_\dotp)^\sigma) = H_n(I^+(X^1),
  H_1(F^{X^1}_\dotp))$, which by the homotopy orbit spectral sequence
  (Proposition~\ref{prop:hoss}) is isomorphic to
  $H_{n+1}((F^{X^1}_\dotp)_{I^+(X^1)}; \Q)$.  Consider
  $(F^{X^1}_\dotp)_{hI(X^1)}$ as a bisimplicial set, with the simplices of
  $F^{X^1}_\dotp$ in the horizontal direction, and the $I(X^1)$-action in the
  vertical direction.  This produces a double complex, whose spectral sequence
  converges to $H_*((F^{X^1}_\dotp)_{I^+(X^1)}; \Q)$.

  Consider the spectral sequence in which we take first vertical homology, then
  horizontal homology.  As the only nondegenerate simplices in $F_\dotp^{X^1}$
  are in dimensions $0$ and $1$, this spectral sequence will be concentrated in
  the first two columns.  The simplicial set $F_\dotp^{X^1}$ has exactly one
  non-basepoint $0$-simplex $[X^1]$, which has as its stabilizer $I^+(X^1)$;
  thus
  \[E^1_{0,q} \cong H_q(I^+(X^1),\Q).\] The simplicial set $F_\dotp^{X^1}$
  has as its nondegenerate $1$-simplices inclusions $U_0 \subseteq X^1$, which
  have as their stabilizers the subgroup of $I(X^0) \times I(X^0)$ with
  determinant $1$ (this is the group which fixes $U_0$ and $U_0^\perp$).  This
  is a $2$-group, and we will therefore have
  \[E^1_{1,q} \cong 0 \qquad q > 0.\] When $q = 0$ this is simply $\Q$, and
  $d_1$ will send this $\Q$ isomorphically to $E^1_{0,0} \cong \Q$.  Thus the
  spectral sequence collapses at $E^2$, where it is concentrated in the $0$-th
  column, producing
  \[H_q(I^+(X^1),H_1(F^{X^1}_\dotp)^\sigma) \cong H_{q+1}(I^+(X^1),\Q^\sigma).\]
  Combining this with (\ref{eq:Sttogp}) produces
  \[H_n(I(X^1), H_1(F^{X^1}_\dotp)^\sigma) \cong H_0\left(\Z/2,
      H_{n+1}\left(I^+(X^1),\Q^\sigma\right)\right).\] In the particular case of
  interest we have
  \[H_1(I(X^1), H_1(F^{X^1}_\dotp)^\sigma) \cong H_0\left(\Z/2, H_2\left(I^+(X^1),
        \Q^\sigma\right)\right).\]
  The group $I^+(X^1)$ is abelian, and therefore
  \[H_2\left(I^+(X^1), \Q^\sigma\right) \cong \Lambda^2 (I^+(X^1)\otimes \Q^\sigma).\]
  The group $\Z/2$ acts by $-1$ on both $\Q$ and on $I^+(X^1)$; thus it will act
  by $-1$ on every chain, and thus on the homology group $H_2$.  Thus, since
  everything is $2$-divisible, 
  \[H_0\left(\Z/2, H_2\left(I^+(X^1),\Q^\sigma\right)\right) \cong 0,\]
  as desired.
\end{proof}

Directly from these calculations we can conclude the following:

\begin{corollary}
  Projection to the base $\theta_m$ is an isomorphism when $m = 0,1$ and
  surjective when $m = 2$.
\end{corollary}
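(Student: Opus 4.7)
The plan is to apply Theorem~\ref{thm:projbase} to the spectral sequence~(\ref{eq:gonchSS}) and exploit its first-quadrant sparsity in low degrees. Theorem~\ref{thm:projbase} reduces surjectivity of $\theta_m$ to the vanishing of all differentials $d^r : E^r_{m,d+1} \to E^r_{m-r,d+r}$ for $r \geq 2$, and injectivity to the vanishing of the $E^\infty$-terms strictly above the base row on the diagonal through $(m,d+1)$. Since the spectral sequence is concentrated in $p \geq 0$ and $q \geq d+1$, most of these requirements are automatic for $m \leq 2$.

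For $m = 0$ both conditions are vacuous: every outgoing $d^r$ from $E^r_{0,d+1}$ targets a column with $p = -r < 0$, and there are no terms on the diagonal $p+q = d+1$ above the base within the first quadrant. Hence $\theta_0$ is an isomorphism (recovering Theorem~\ref{thm:cathelineau}). For $m = 1$, surjectivity is again automatic since all $d^r$ ($r \geq 2$) land in negative columns; only injectivity requires input. The single offending term is $E^\infty_{0,d+2}$, which is a subquotient of $E^1_{0,d+2}$. Here I would quote the preceding theorem, which gives $H_1(I(X^1); H_1(F^{X^1}_\dotp)^\sigma)_{\Q} = 0$: by the K\"unneth decomposition supplied by Lemma~\ref{lem:homologyComp}, every summand of $E^1_{0,d+2}$ has this factor, so $E^1_{0,d+2} = 0$ rationally and $\theta_1$ is an isomorphism.

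For $m = 2$ only surjectivity is claimed, so it suffices to check that the outgoing differentials $d^r : E^r_{2,d+1} \to E^r_{2-r,d+r}$ vanish for every $r \geq 2$. For $r \geq 3$ the target column is negative, hence trivial. The remaining case $r = 2$ lands in $E^2_{0,d+2}$, a subquotient of $E^1_{0,d+2}$, which again vanishes by the argument above. Thus $d^2 = 0$ and $\theta_2$ is surjective.

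The only nontrivial input is the rational vanishing $E^1_{0,d+2} = 0$, supplied by the preceding theorem; the rest is first-quadrant bookkeeping via Theorem~\ref{thm:projbase}. There is no serious obstacle, only a careful verification that the indexing behaves as advertised and that the required coefficient localization (rational for the $m = 1, 2$ cases) is consistent throughout.
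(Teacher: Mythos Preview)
Your argument is correct and matches the paper's approach: the corollary is stated as an immediate consequence of the two preceding theorems, and you have accurately filled in the spectral-sequence bookkeeping that the paper leaves implicit. The key input—vanishing of $E^1_{0,d+2}$ rationally—is precisely what the preceding theorem establishes, and your application of Theorem~\ref{thm:projbase} for $m=0,1,2$ is exactly what the paper intends by ``directly from these calculations.''
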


\section{Goncharov's conjectures} \label{sec:conj}

In this section we discuss the connections between Goncharov's original
conjectures, Cheeger--Chern--Simons invariants, and the results of the previous
sections.

\subsection{Projection to the base and the modified conjectures}

In \cite{goncharov}, Goncharov has a series of three conjectures about possible
connections between the Dehn complex and the algebraic $K$-theory of $\CC$.  We
give a summary of these conjectures here.  Our notation does not exactly agree
with Goncharov's; in particular, Goncharov's Dehn complex is cohomologically
graded and $1$-indexed, while ours is homologically graded and $0$-indexed.  We
number the parts of our summary by the number of the conjecture in
\cite{goncharov}.

All tensor products of $\Z/2$-modules in this section are equipped with a
$\Z/2$-action via the diagonal action.  Write
$\Q^{n\tw} \defeq (\Q^\tw)^{\otimes n}$, equipped with the diagonal action
of $\Z/2$.

\begin{conjecture}[{\cite[Conjectures 1.7-1.9]{goncharov}}]
  Let $\P_\ast(X^{2n-1})$ be the Dehn complex for the geometry $X^{2n-1}$ over
  $\R$.
  \begin{itemize}
  \item[(1.8)] There exist homomorphisms
    \[H_i\P_\ast(S^{2n-1}) \rto^{\phi_i} (\gr^\gamma_nK_{n+i}(\CC)_\Q\otimes Q^{n\tw})^+\]
    and
    \[H_i\P_\ast(\cH^{2n-1}) \rto^{\phi_i} (\gr^\gamma_nK_{n+i}(\CC)_\Q\otimes
      \Q^{n\tw})^-.\]
  \item[(1.7)] The homomorphism $\phi_{n-1}$ is injective, and the diagrams
    \begin{diagram}
      { \ker \bigoplus_{i=1}^{n-1} \hat D_i \cong \hspace{-3em}& H_{n-1}\P_*(S^{2n-1}) & & (\gr_n^\gamma K_{2n-1}(\CC)_\Q \otimes
        \Q^{n\tw})^+ \\
        & & \R/(2\pi)^n\Z \\};
      \to{1-2}{1-4}^{\phi_{n-1}}
      \to{1-2}{2-3}_{\mathrm{vol}} \to{1-4}{2-3}^r
    \end{diagram}
    and
    \begin{diagram}
      { \ker \bigoplus_{i=1}^{n-1} \hat D_i \cong \hspace{-3em} &
        H_{n-1}\P_*(\cH^{2n-1}) & & (\gr_n^\gamma K_{2n-1}(\CC)_\Q \otimes
        \Q^{n\tw})^- \\
        & & \R \\};
      \to{1-2}{1-4}^{\phi_{n-1}} \to{1-2}{2-3}_{\mathrm{vol}}
      \to{1-4}{2-3}^r
    \end{diagram}
    commute. 
    Here, the right-hand map is the Borel (resp. Beilinson) regulator.
  \item[(1.9)] All $\phi_i$ are  isomorphisms.
  \end{itemize}
  Here, $\gr^\gamma_n$ is the $n$-th graded part of the $\gamma$-filtration, and
  $\Q^{n\tw}$ is the vector space $\Q$ with $\Z/2$ acting on it via
  multiplication by $(-1)^n$.  The sign in the superscript indicates taking the
  $\pm 1$ eigenspace with respect to the action by complex conjugation.
\end{conjecture}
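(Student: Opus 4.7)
The plan is to derive these conjectures from Theorems~\ref{mainthm:a} and~\ref{mainthm:inj} by combining the topological machinery of this paper with a comparison between the group homology of isometry groups and the graded pieces of the algebraic $K$-theory of $\CC$.

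For part (1.8), Theorem~\ref{mainthm:inj} already produces homomorphisms $H_{i+n-1}(I(X);\Zinv^\sigma) \to H_i\P_*(X^{2n-1})$ in the \emph{opposite} direction to Goncharov's $\phi_i$. I would construct $\phi_i$ as a two-step composition: first, after rationalization, split off a section of the map of Theorem~\ref{mainthm:inj}; then compose with a comparison map $H_{n+i-1}(I(X);\Q^\sigma) \to (\gr^\gamma_n K_{n+i}(\CC)_\Q \otimes \Q^{n\sigma})^\pm$. The comparison map would arise from the inclusion $I(X) \hookrightarrow GL_{2n}(\CC)$ together with the standard identification of $K_*(\CC)_\Q$ with the primitive elements in the Hopf algebra $H_*(BGL(\CC);\Q)$; the $\pm$ eigenspace structure arises from tracking complex conjugation on $GL_{2n}(\CC)$ against the determinant twist on $I(X)$ and the $(-1)^n$-action on $\Q^{n\sigma}$.

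Part (1.7) concentrates most of the computational content. For injectivity of $\phi_{n-1}$, I would use the explicit description of the edge homomorphism $\theta_{n-1}$ in Lemma~\ref{lem:edge-desc} together with the identification of the Cheeger--Chern--Simons class with the Beilinson/Borel regulator due to Dupont--Hain--Zucker; injectivity then reduces to the expected injectivity of the CCS invariant. Commutativity of the volume diagram should follow from matching the simplex formula $\prod_i \det(g_i)\,[x_0, g_d x_0, \ldots, g_d\cdots g_1 x_0]$ of Lemma~\ref{lem:edge-desc} against the cocycle formula for the CCS class integrated over a geodesic simplex with those vertices; this is essentially a direct calculation in hyperbolic/spherical geometry.

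The main obstacle is part (1.9). Promoting $\phi_i$ to an isomorphism for all $i$ would require three substantial inputs: (a) full surjectivity of $\theta_m$ via Theorem~\ref{thm:projbase}, i.e.\ vanishing of all higher differentials $d^r: E^r_{m,d+1} \to E^r_{m-r,d+r}$ in the spectral sequence (\ref{eq:gonchSS}); (b) a stability or decomposition theorem identifying $H_*(I(X);\Zinv^\sigma)$ in the relevant range with the ``orthogonal summand'' of $H_*(GL(\CC);\Q)$; and (c) the conjectural identification of the rank and $\gamma$-filtrations on $K_*(\CC)_\Q$ flagged in the introduction. I expect (b) to be the hardest single step, since the comparison between real orthogonal and complex general linear homology involves delicate control of complex conjugation interacting with the determinant twist---exactly the passage the introduction warns is a subtle but non-trivial modification of Goncharov's original formulation.
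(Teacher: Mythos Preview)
The statement you are trying to prove is a \emph{conjecture}, not a theorem, and the paper does not prove it. The paper merely summarizes Goncharov's conjectures from \cite{goncharov} before presenting its own \emph{modified} results (Theorems~\ref{thm:hyperbolicGonch1.7}, \ref{thm:sphericalGonch1.7}, \ref{thm:gonch1.8}, and Proposition~\ref{cor:zigzags}), which construct maps in the \emph{opposite} direction, replace $\gr^\gamma_n K_*(\CC)_\Q$ by twisted homology of orthogonal groups, and replace the Borel regulator by the Cheeger--Chern--Simons class. There is nothing here to compare your proposal against, because the paper offers no proof of the statement as written.

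Your proposal is not a proof even in outline; it is a sketch of what one would need, and several of those needs are themselves open. For (1.8), you propose to ``split off a section'' of the map in Theorem~\ref{mainthm:inj} after rationalization, but the paper only establishes that $\theta_m$ is an isomorphism for $m=0,1$ and surjective for $m=2$; for general $m$ there is no splitting available. For (1.7), you correctly identify that commutativity of the volume triangle matches Lemma~\ref{lem:edge-desc} against the CCS cocycle, but you then say injectivity ``reduces to the expected injectivity of the CCS invariant'' --- that expected injectivity is precisely one of the conjectural inputs the paper flags as unproven (see the statement of Theorem~\ref{thm:hyperbolicGonch1.7}: ``$\vol$ is injective \emph{if} $\theta_{n-1}$ is surjective and $\CCS$ is injective''). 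For (1.9) you yourself list three substantial open problems, including the rank-versus-$\gamma$ filtration comparison. A derivation that rests on multiple open conjectures is not a proof; it is a reduction, and the paper is explicit that this is as far as current methods go.
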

For an exposition of the $\gamma$-filtration, see for example
\cite{grayson_filtration}.  For an exposition of the Borel and Beilinson
regulators see \cite[Chapter 9]{burgosgil}.

Goncharov proves (1.7) in the case when $\CC$ is replaced with $\bar \Q$ and
simplices in the Dehn complex are restricted to those with algebraic vertices
\cite[Theorem 1.6]{goncharov}.  Note that any polytope which can appear as the
fundamental domain of a group action is automatically in the kernel of all Dehn
invariants; thus in particular Goncharov's conjectures would imply that all
volumes of hyperbolic manifolds must be in the image of the Borel regulator.

Inspired by the conjectures, we propose an alternative method to connect the
algebraic $K$-theory of $\CC$ and the scissors congruence groups (see
Proposition~\ref{cor:zigzags}).  Explaining the $\gamma$-filtration and the Borel
and Beilinson regulators in the above theorems is extremely nontrivial, while
the corresponding notions in our approach are much more elementary.  Due to the
nature of our methods the morphisms we construct go in the opposite direction to
Goncharov's desired morphisms.

As a first observation, Theorem~\ref{thm:projbase} constructs exactly the
morphism desired by Goncharov's Conjecture 1.8.  In the rest of this section we
will analyze how well this morphism proves Conjectures 1.7 and 1.9.

We begin with a description of the Cheeger--Chern--Simons class, which plays the
same role as the Borel regulator for the case of orthogonal groups (rather than
general linear groups).  

The construction is originally due to Cheeger
and Simons \cite[Section 8]{cheegersimons} (although the authors originally
learned it from Dupont \cite[Sect. 10]{dupont}) and works for more general
homogeneous spaces. See also \cite{dupont_kamber} and \cite[Section
5]{dupont_hain_zucker}.

\begin{definition}
  The \emph{Cheeger--Chern--Simons construction} is a homomorphism
  \[\mathscr{CCS}: H_{2n-1}(O(2n;\R);\Zinv^\tw) \to \widehat\P(S^{2n-1})/[S^{2n-1}],\] defined as follows.
  Consider the \emph{space} $O(2n;\R)/O(2n-1;\R)$ with the usual topology.  This
  is homeomorphic to $S^{2n-1}$ with a distinguished point.  The group
  $O(2n;\R)$ acts on this on the left, moving the distinguished point.  A chain
  in degree $2n-1$ is represented by a sequence of elements
  $(g_1,\ldots,g_{2n-1})$; call such a chain \textsl{generic} if the points
  $\{x,g_{2n-1}x,g_{2n-1}g_{2n-2}x, \ldots, g_{2n-1}\cdots g_1x\}$ all lie in a
  single open hemisphere.  For a generic chain, define a geodesic simplex in
  $S^{2n-1}$ associated to this chain by
  \[\Delta_{(g_1,\ldots,g_{2n-1})} \defeq
    (x_0,g_{2n-1}x_0,g_{2n-1}g_{2n-2}x_0,\ldots,g_{2n-1}\cdots g_1x_0).\] To
  check that this morphism is well-defined it suffices to check that given any
  $2n$-chain $(g_1,\ldots,g_{2n})$ all of whose faces are generic, the sum over
  the boundary is $0$.  This holds in $\P(S^{2n-1})/[S^{2n-1}]$, and this
  construction can be extended to all of $H_{2n-1}(O(2n;\R);\Zinv^\tw)$ as the
  generic chains are dense.  Define
  \[\begin{array}{rccl}
    \mathscr{CCS}: & H_{2n-1}(O(2n;\R)^\delta;\Zinv^\tw) &\to& \widehat\P(S^{2n-1})/[S^{2n-1}] \\
    &[(g_1,\ldots,g_{2n-1})] &\rgoesto& [\Delta_{(g_1,\ldots,g_{2n-1})}] 
    \end{array}\]
  (See \cite[Section 8]{cheegersimons} and \cite{dupont_kamber} for a more
  in-depth discussion.)

  Now fix a volume form $v\in \Omega^{2n-1}(S^{2n-1})$ (which we normalize to so
  that $\int_{S^{2n-1}} v_{S^{2n-1}} = (2\pi)^n$).  The
  \emph{Cheeger--Chern--Simons class} is the homomorphism
  \[\CCS: H_{2n-1}(O(2n;\R);\Zinv^\tw) \mathrel{\setlen{5em}{\to^{\mathscr{CCS}}}}
    \widehat\P(S^{2n-1})/[S^{2n-1}] \to^\vol \R/(2\pi)^n\Zinv,\]

  Here we restricted to the even-dimensional case because (by the
  ``center-kills'' lemma \cite[Lemma 5.4]{dupont}) the homology groups
  $H_{2n-1}(O(2n;\R));\Zinv^\sigma)$ are all $0$.  However, this construction
  works equally well for odd-dimensional groups when this is not the case.  By
  considering $\mathcal{H}^d \cong O^+(1,d;\R)/O(d;\R)$ one obtains
  an analogous homomorphism
  \[\mathscr{CCS}: H_{d}(O(1,d);\Zinv^\tw) \to \P(\cH^{d})\]
  and 
  \[\CCS: H_{d}(O(1,d;\R)^\delta;\Zinv^\tw) \rto \R.\]
\end{definition}

A close analysis of the Cheeger--Chern--Simons construction gives the following
analog of Goncharov's Conjecture 1.7 for hyperbolic geometry:
\begin{theorem} \label{thm:hyperbolicGonch1.7} When $X$ is hyperbolic with $d = 2n$ or
  $2n-1$, projection to the base (Definition~\ref{def:epsilon})
  $ H_{2n-1}(O(2n,1;\R)^\delta,\Zinv^\tw) \to H_{n-1}\P_*(\cH_\R^{2n-1})$ fits
  into a commutative diagram
  \[
    \begin{tikzcd}
      H_{2n-1} (O(2n-1,1;\R)^\delta, \Zinv^\tw) \ar{rr}{\theta_{n-1}} \ar{dr}[swap]{\CCS} & & H_{n-1}\P_*(\cH_\R^{2n-1})  \ar{dl}{\vol} \\
      & \R. &
    \end{tikzcd}
  \]
  In particular, $\vol$ is injective if $\theta_{n-1}$ is surjective and $\CCS$ is injective.
\end{theorem}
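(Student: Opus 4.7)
The plan is to reduce the commutativity of the triangle to the explicit formula for $\theta_{n-1}$ furnished by Lemma~\ref{lem:edge-desc}, and then to deduce the injectivity statement for $\vol$ by a routine diagram chase. Set $d = 2n-1$ throughout, so that $H_{d}(O(1,d;\R)^\delta,\Zinv^\tw)$ is the source of both $\theta_{n-1}$ and $\CCS$.

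First I would unpack both $\theta_{n-1}$ and $\mathscr{CCS}$ at the level of a generic twisted chain $(g_1,\ldots,g_d)$ on $O(1,d;\R)^\delta$. By Lemma~\ref{lem:edge-desc}, such a chain is sent by $\theta_{n-1}$ to the scissors-congruence class of the geodesic simplex
\[\bigl\{\,x_0,\ g_d x_0,\ g_d g_{d-1} x_0,\ \ldots,\ g_d\cdots g_1 x_0\,\bigr\}\]
weighted by the sign $\prod_{i=1}^d \det(g_i)$ coming from the $\Zinv^\tw$-twist. But this is exactly the formula defining $\mathscr{CCS}$: the simplex $\Delta_{(g_1,\ldots,g_d)}$ has precisely these vertices, and the twist in the coefficient system absorbs the same determinant sign (indeed, this is the whole reason the twist is needed for $\mathscr{CCS}$ to be well-defined). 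Since generic chains are dense in $H_d(O(1,d;\R)^\delta;\Zinv^\tw)$, the equality of formulas propagates to give $\theta_{n-1} = \mathscr{CCS}$ as maps into $\widehat{\P}(\cH^d)$.

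Given this identification, commutativity of the triangle is immediate: by definition $\CCS = \vol \circ \mathscr{CCS}$, so
\[\vol \circ \theta_{n-1} \;=\; \vol \circ \mathscr{CCS} \;=\; \CCS.\]
For the final claim, suppose $\theta_{n-1}$ is surjective and $\CCS$ is injective, and let $[P] \in H_{n-1}\P_*(\cH^{2n-1})$ satisfy $\vol([P]) = 0$. Choose a lift $c$ with $\theta_{n-1}(c) = [P]$; then $\CCS(c) = \vol(\theta_{n-1}(c)) = \vol([P]) = 0$, so $c = 0$ by injectivity of $\CCS$, whence $[P] = 0$.

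The main obstacle is the conventions-matching step: one must verify that the order of vertices (right-to-left partial products vs.\ left-to-right) and the sign produced by the twisted coefficient system $\Zinv^\tw$ on a bar-resolution chain $(g_1,\ldots,g_d)$ agree on the nose with Cheeger--Simons' conventions for $\mathscr{CCS}$, rather than differing by an overall sign or a reindexing. This is pure bookkeeping, but it is the only place a slip could introduce a spurious factor of $(-1)$. Everything else in the argument is formal, once Lemma~\ref{lem:edge-desc} is in hand.
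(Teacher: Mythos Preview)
Your proposal is correct and follows essentially the same route as the paper: the paper's proof is a one-sentence observation that $\mathscr{CCS}$ agrees with the explicit description of $\theta_{n-1}$ from Lemma~\ref{lem:edge-desc}, after composing with the isomorphism $\P(\cH^d)\cong H_{d+1}((S^\sigma\smash F_\dotp^{\cH^d})_{hI(\cH^d)};\Zinv)$ of Theorem~\ref{thm:f<>p}. Your version spells out the diagram chase for the injectivity claim and flags the sign/ordering bookkeeping, but the core idea is identical.
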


\begin{proof}
  First, consider the hyperbolic case.  The key observation to prove the theorem
  is that the group homomorphism $\mathscr{CCS}$ agrees with the explicit
  description of $\theta_{n-1}$ in Lemma~\ref{lem:edge-desc} after composition
  with the isomorphism
  $\P(\cH^{d}) \rto H_{d+1}((S^\sigma \smash
  F^{\cH^{d}}_\dotp)_{I(\cH^{d})};\Zinv)$ in Theorem~\ref{thm:f<>p}.
\end{proof}

The main difficulty in relating the spherical Cheeger--Chern--Simons
homomorphism to volume is that (as mentioned in Remark~\ref{rem:spherVol})
volume is not well-defined on $\P(S^{2n-1})$.  An interesting question is whether
it is possible to give a well-defined definition of volume of
$H_{n-1}\P_*(S^{2n-1})$---i.e., on the kernel of the Dehn invariant.  However,
it may be the case that this group is nonzero, and yet still a well-defined lift
of the volume is possible, in which case an analogous statement to the
hyperbolic one should exist.  The precise relationship between the
Cheeger--Chern--Simons class and projection to the base in the spherical case is
the following; we omit the proof as it is directly analogous to the hyperbolic
case.

Let $p:\hat P(S^{2n-1}) \rfib \P(S^{2n-1})$ be the projection.  Let
\[L_{2n-1} \defeq p^{-1}(H_{n-1}\P_*(S^{2n-1}));\] this is the subgroup of
spherical polytopes with Dehn invariant equal to $0$ after reduction by lunes.

\begin{theorem} \label{thm:sphericalGonch1.7}
  In the spherical case, the Cheeger--Chern--Simons class factors through the
  inclusion
  \[L_{2n-1}/[S^{2n-1}] \rcofib \hat \P(S^{2n-1})/[S^{2n-1}]\] and is related to
  the projection to the base via the following commutative diagram:
  \begin{diagram}
    { H_{2n-1}(O(2n;\R);\Zinv^\tw) & L_{2n-1}/[S^{2n-1}] &
      \R/(2\pi)^n\Zinv \\
      & H_{n-1}\P_*(S^{2n-1}) \\};
    \to{1-1}{1-2}^{\mathscr{CCS}}
    \to{1-2}{1-3}^\vol \to{1-1}{2-2}_{\theta_{n-1}}
    \fib{1-2}{2-2}^{p}
    \node (m-100-100) at (m-1-3) {\phantom{$\R/(2\pi)^n\Zinv$}};
    \diagArrow{out=15, in=165, ->}{1-1}{100-100}^{\CCS}
  \end{diagram} 
\end{theorem}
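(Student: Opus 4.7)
The plan is to mimic the hyperbolic argument but pay careful attention to the fact that, in the spherical setting, the projection $p:\hat\P(S^{2n-1})\to\P(S^{2n-1})$ has a nontrivial kernel (generated by lunes) on which $\vol$ is not well-defined. The organizing observation is that the explicit formula for $\mathscr{CCS}$ on the space $O(2n;\R)/O(2n-1;\R)\cong S^{2n-1}$ is literally the same formula as the one appearing in Lemma~\ref{lem:edge-desc} for the projection-to-the-base $\theta_{n-1}$ (under the identification of $\P(S^{2n-1})$ with $H_{2n}\bigl((S^\tw\smash F_\dotp^{S^{2n-1}})_{hI(S^{2n-1})};\Zinv\bigr)$ provided by Theorem~\ref{thm:f<>p}). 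So the whole content of the theorem is bookkeeping about which target group the common formula lands in.

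First I would establish that the lifted Cheeger--Chern--Simons class actually lands in $L_{2n-1}/[S^{2n-1}]$. For this, take a generic $(2n-1)$-chain $(g_1,\dots,g_{2n-1})$, apply $\mathscr{CCS}$ to produce the geodesic simplex $\Delta_{(g_1,\dots,g_{2n-1})}$ in $\hat\P(S^{2n-1})/[S^{2n-1}]$, and then apply $p$ to obtain a class in $\P(S^{2n-1})$. On the other side of the square, apply $\theta_{n-1}$: by Lemma~\ref{lem:edge-desc} (with the orthogonal $X = S^{2n-1}$ and signs coming from $\det$ on the twist), this gives precisely the scissors congruence class of the same geodesic simplex with the same sign $\prod_i\det(g_i)$. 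Therefore $p\circ\mathscr{CCS}=\theta_{n-1}$ on generic chains; since generic chains are dense in $H_{2n-1}(O(2n;\R);\Zinv^\tw)$ (exactly the density argument already used to extend $\mathscr{CCS}$ to all chains), the equality extends. Because $\theta_{n-1}$ lands in $H_{n-1}\P_*(S^{2n-1})$ by construction, the image of $\mathscr{CCS}$ lies in $p^{-1}(H_{n-1}\P_*(S^{2n-1}))/[S^{2n-1}]=L_{2n-1}/[S^{2n-1}]$, yielding the factorization. This simultaneously proves commutativity of the lower-left triangle $p\circ \mathscr{CCS}=\theta_{n-1}$.

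The outer triangle is then tautological: $\CCS$ is defined as $\vol\circ\mathscr{CCS}$, and the volume form is chosen so that $[S^{2n-1}]\mapsto (2\pi)^n$, which is exactly the amount killed in the codomain $\R/(2\pi)^n\Zinv$. So collapsing $[S^{2n-1}]$ on the left is compatible with reducing modulo $(2\pi)^n\Zinv$ on the right, and the outer region commutes.

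The main obstacle I anticipate is the one we just circumvented: verifying that $p\circ \mathscr{CCS}$ really agrees with $\theta_{n-1}$ under the identification of scissors congruence groups with $H_{2n}((S^\tw\smash F_\dotp^{S^{2n-1}})_{hI(S^{2n-1})};\Zinv)$ from Theorem~\ref{thm:f<>p}. The subtlety here is that the formula in Theorem~\ref{thm:f<>p} sends a simplex with vertices $\{x_0,\dots,x_n\}$ to an antisymmetrized sum of flags indexed by $\Sigma_{n+1}$, while Lemma~\ref{lem:edge-desc} describes $\theta_{n-1}$ as a single ordered flag obtained from the orbit $\{x_0,g_dx_0,\dots,g_d\cdots g_1x_0\}$. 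Showing that these descriptions agree up to the sign $\prod \det(g_i)$ requires tracking the determinant twist through the edge-homomorphism of the spectral sequence (\ref{eq:gonchSS}); this is exactly the computation already deferred to Lemma~\ref{lem:edge-desc-k} in Section~\ref{sec:twoproofs} for the hyperbolic case, and the same computation applies verbatim in the spherical case since it depends only on the structure of $F_\dotp^X$ and the $I(X)$-action, not on whether the geometry is positive- or negative-definite.
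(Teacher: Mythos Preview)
Your proposal is correct and follows essentially the same route as the paper: the paper omits the proof, stating it is directly analogous to the hyperbolic case (Theorem~\ref{thm:hyperbolicGonch1.7}), where the key observation is precisely that $\mathscr{CCS}$ agrees with the description of $\theta_{n-1}$ in Lemma~\ref{lem:edge-desc} under the identification of Theorem~\ref{thm:f<>p}. Your additional bookkeeping about the factorization through $L_{2n-1}/[S^{2n-1}]$ via $p\circ\mathscr{CCS}=\theta_{n-1}$ spells out exactly the extra step needed in the spherical case.
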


Moreover, the analysis in Theorem~\ref{thm:cathelineau} produces the following
version of Goncharov's conjecture 1.8, with special cases of 1.9:

\begin{theorem}\label{thm:gonch1.8}
  Let $d = 2n$ or $2n-1$. For a neat geometry $X$ of dimension $d$,
  Projection to the base gives a homomorphism
  \[\theta_m: H_{n+m}(I(X), \Q^\sigma) \rto H_m\P_*(X)_\Q.\]
  This homomorphism is an isomorphism when $m = 0$ or $1$ and is surjective when
  $m = 2$.
\end{theorem}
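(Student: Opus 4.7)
The plan is to invoke Theorem~\ref{thm:projbase}, which already constructs the homomorphism $\theta_m$, and then verify its injectivity/surjectivity criterion on the rationalization of the spectral sequence (\ref{eq:gonchSS}). Throughout, $H_m\P_*(X)_\Q$ appears as the entry $E^2_{m,d+1}$ in the base row $q=d+1$.

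For $m=0$, I would simply observe that every potentially obstructing differential $d^r\colon E^r_{0,d+1}\to E^r_{-r,d+r}$ and every kernel entry on the relevant diagonal land in columns of negative index, so $\theta_0$ is automatically an isomorphism. For $m=1$, surjectivity follows from the same column count (the differentials out of $E^r_{1,d+1}$ with $r\ge 2$ target $p\le -1$), while injectivity reduces to the single condition $E^\infty_{0,d+2}\otimes\Q=0$; because nothing can hit position $(0,d+2)$ from above the base, this is equivalent to $E^1_{0,d+2}\otimes\Q=0$. For $m=2$, where only surjectivity is claimed, the differentials $d^r$ with $r\ge 3$ again target negative columns and vanish automatically, leaving $d^2\colon E^2_{2,d+1}\to E^2_{0,d+2}$ as the unique possible obstruction; its target is a rational subquotient of $E^1_{0,d+2}\otimes\Q$. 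All three cases thus collapse to the single vanishing $E^1_{0,d+2}\otimes\Q=0$.

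To dispatch that vanishing, I would invoke the preceding Cathelineau-type theorem, whose proof already carries out exactly this computation: unpacking $E^1_{0,d+2}$ via Lemma~\ref{lem:homologyComp} writes it as a direct sum of tensor products of rational group-homology groups, and the constraint that the exponents $\ell_j$ sum to $d+2$ while each meeting a prescribed minimum forces every summand to contain at least one factor of the form $H_1(I(X^1),H_1(F_\dotp^{X^1})^\sigma)_\Q$ with $X^1$ one-dimensional, which is zero by that theorem. The genuine mathematical obstacle is therefore already overcome upstream, and the new content of Theorem~\ref{thm:gonch1.8} is the observation that this single vanishing, combined with the sparsity of a first-quadrant spectral sequence, simultaneously governs the isomorphism at $m=0,1$ and the surjectivity at $m=2$.
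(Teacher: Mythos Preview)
Your proposal is correct and follows essentially the same route as the paper: the paper states Theorem~\ref{thm:gonch1.8} without a separate proof, deferring to the immediately preceding Corollary, which in turn is drawn ``directly from these calculations''---namely Theorem~\ref{thm:cathelineau} (handling $m=0$) and the generalization of Cathelineau's Proposition~6.2.2 (whose proof establishes the rational vanishing of $E^1_{0,d+2}$, which handles $m=1$ and the surjectivity at $m=2$). Your argument unpacks exactly this logic through the criteria of Theorem~\ref{thm:projbase}.

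One small imprecision: you write that $E^\infty_{0,d+2}\otimes\Q=0$ is \emph{equivalent} to $E^1_{0,d+2}\otimes\Q=0$ ``because nothing can hit position $(0,d+2)$ from above the base.'' This is not quite right---the differential $d^1\colon E^1_{1,d+2}\to E^1_{0,d+2}$ does land there from above the base, and $d^2\colon E^2_{2,d+1}\to E^2_{0,d+2}$ lands there from the base itself---so the two conditions are not equivalent. But you only use the trivial implication $E^1_{0,d+2}=0\Rightarrow E^\infty_{0,d+2}=0$, so the argument stands; just replace ``equivalent to'' with ``implied by.''
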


\subsection{Relationship to algebraic $K$-theory}

In order to relate the homology of isometry groups to algebraic $K$-theory, we
introduce the rank filtration.

\begin{definition}[{\cite[p. 296]{weibel_kbook}}]
  The \emph{higher rational $K$-theory of $k$} is defined to be
  \[K_*(k)_\Q \defeq \hbox{primitive elements of the Hopf algebra } H_*GL(k).\]
  (Recall that all homology is taken with rational coefficients.) The rank
  filtration on $K_*(k)_\Q$ is defined by
  \[F_iK_*(k)_\Q \defeq  K_*(k)_\Q \cap \big(\im (H_* GL(i;k) \rto H_* GL(k))\big).\]
  Then
  \[\gr^\rk_nK_*(k)_\Q \defeq F_nK_*(k)_\Q/F_{n-1}K_*(k)_\Q.\]
  We will also need an auxiliary object; define
  \[CL_{n,m}(k) \defeq \coker \big( (H_m GL(n-1;k))_P \rto (H_m GL(n;k))_P\big),\]
  where $(H_m GL(i;k))_P$ is the subgroup of $H_m GL(i;k)$ of those elements
  whose images in $H_* GL(k)$ is primitive. Observe that there is a natural
  surjection $CL_{n,m}(k) \rfib \gr^\rk_n K_m(k)_\Q$.

  Analogously to $CL_{n,m}$ we define
  \[CO_{n,m}(k) \defeq \coker ((H_mSO(2n-2;k))_P \to H_mSO(2n;k)_P),\] where
  $(H_mSO(i;k))_P$ denotes those elements whose image in $H_*SO(k)$ is
  primitive.  We assume that the stabilization map $SO(2n-2;k) \to SO(2n;k)$
  adds coordinates at positions $n$ and $2n$, rather than at $2n-1$ and $2n$.
\end{definition}

There is an action of $\Z/2$ on $H_*SO(n;k)$ given by conjugation by a matrix
with determinant $-1$.  From the exact sequence
\[SO(n;k) \rto O(n;k) \rto \Z/2\] the homology of $SO(n;k)$ splits into
eigenspaces
\[H_mSO(n;k) \cong H_mO(n;k) \oplus H_m(O(n;k),\Q^\tw),\] where the first
component is the $+1$-eigenspace and the second is the $-1$-eigenspace
\cite[p.489]{cathelineau_homology_stability}.  Since stabilization is
equivariant with respect to this action, it induces an action on both
$\gr^\rk_nHK_*(k)_\Q$ and $CO_{n,m}(k)$.  When $n$ is odd
$H_m(O(n;k);\Q^\tw) = 0$ for all $m$ (\cite[Theorem
1.4]{cathelineau_homology_stability}, or \cite[Lemma 5.4]{dupont}); in
particular, this implies that
\begin{equation} \label{eq:CO<->H}
  CO_{n,m}(k)^- \cong H_m(O(2n;k),\Q^\tw).
\end{equation}

We now turn our attention to explaining the connection between rational homology
of orthogonal groups and algebraic $K$-theory.  The comparison between the two
is very well-studied (see, for example,
\cite{berrick_karoubi_schlichting_ostvaer}), and it is known that, after
rationalizing, Hermitian $K$-theory is isomorphic to the homotopy fixed points
of algebraic $K$-theory under the action sending a matrix to its transpose
\cite[(1-c)]{berrick_karoubi_schlichting_ostvaer} via the hyperbolic map
(defined below).  This is explored in more detail below, and used to explain the
connection between Goncharov's conjectures and the theorems proved above.  In
particular, we will explain why, in the real case, both spherical and hyperbolic
geometries arise, and how Goncharov's curious twisting factors $\Q^{n\tw}$
arise.
\begin{definition}
  The \emph{hyperbolic map}:
  \[\hyp:GL(n;k) \rto SO(n,n;k): \  M
    \rgoesto \begin{matrix}{cc} M \\ & (M^\tw)^{-1}\end{matrix}.\] When $k$
  contains $i = \sqrt{-1}$ and $\sqrt{2}$, conjugation by the matrix
  \[D_n \defeq \frac1{\sqrt 2}
    \left(\begin{array}{cc} I_n & I_n \\ -iI_n & iI_n \end{array}\right) \qquad
    \det D_n = i^n\]
  induces an isomorphism $SO(n,n;k) \cong SO(2n;k)$.  Conjugation by the matrix
  \[D^{[1]}_n \defeq \frac1{\sqrt 2} \left(
      \begin{array}{cc} I_n & I_n \\
        -\diag(1,i,\ldots,i) & \diag(1,i,\ldots,i) \end{array}\right) \qquad
    \det D^{[1]}_n = i^{n-1}\] induces an isomorphism
  $SO(n,n;k) \cong SO(1,2n-1;k)$.
\end{definition}

This map is not an isomorphism on homology, and it is known that in the limit as
$n \to \infty$ it has a large kernel.

\begin{definition}
  For any field extension $L/k$, the groups $H_mGL(n;k)$ and $H_mSO(2n;k)$ have
  an induced action by $\Gal(L/k)$.  We call this the \textsl{Galois action} and
  write it on the \textsl{left}.  We call the action induced by conjugation by a
  matrix with determinant $-1$ on $H_*SO(n;k)$ the \textsl{conjugation action},
  and we write it on the \textsl{right}.  These two actions commute, and are
  equivariant with respect to the stabilization maps, so induce actions on
  $CL_{n,m}$ and $CO_{n,m}$.  In the current context only quadratic extensions
  are considered, and $\pm1$-eigenspaces are denoted with a $+$ or a $-$, as
  before.  Thus, for example, the space $\tensor[^+]{CO_{n,m}(k)}{^-}$ is the
  subspace of those vectors which are $-1$-eigenvectors for the conjugation
  action and $+1$-eigenvectors for the Galois action.

  The module $\Q^\tw$ is considered to have ``Galois action'' by the sign
  action, and abuse notation to consider the ``Galois action'' on
  $CL_{m,n}(L)\otimes \Q^{n\tw}$ via the diagonal action.
\end{definition}

In the case when a field $k$ does not contain $\sqrt{-1}$ it is possible to
compare the scissors congruence of $k$ and the algebraic $K$-theory of $k(i)$.
The case when $k = \R$ is the case of Goncharov's original conjectures.
\begin{proposition} \label{cor:zigzags}
  Let $k$ be a field containing $\sqrt{2}$ and not containing $i = \sqrt{-1}$.
  There exist natural zigzags
  \[\tensor[^+]{(\gr_n K_m(k(i))_\Q\otimes \Q^{n\tw})}{} \lfib \tensor[^+]{(CL_{n,m}(k(i))\otimes
    \Q^{n\tw})}{} \rto \tensor[^+]{H_{m-n}\P_*(S^{2n-1}_{k(i)})}{} \lto
    H_{m-n}\P_*(S_k^{2n-1})\] and
  \[\tensor[^-]{(\gr_n K_m(k(i))_\Q\otimes \Q^{n\tw})}{} \lfib \tensor[^-]{(CL_{n,m}(k(i))\otimes
    \Q^{n\tw})}{} \rto \tensor[^+]{H_{m-n}(\P_*(\cH^{2n-1}_{k(i)}))}{}
    \lto H_{m-n}\P_*(\cH_k^{2n-1}).\] Here, the middle map is induced by
  the hyperbolic map and projecting to the base. All homology is taken with rational
  coefficients.
\end{proposition}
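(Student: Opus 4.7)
The plan is to construct each of the three arrows of the zigzag independently and then verify that the indicated Galois eigenspaces are preserved. The left-hand surjection is immediate: by definition of the rank filtration there is a natural surjection $CL_{n,m}(k(i)) \twoheadrightarrow \gr^\rk_n K_m(k(i))_\Q$, and tensoring with $\Q^{n\tw}$ and passing to a Galois eigenspace preserves surjectivity since taking $\pm$-eigenspaces is exact on $\Q$-vector spaces with a $\Z/2$-action. The right-hand map is induced by the functoriality of the Dehn complex under the inclusion $k \hookrightarrow k(i)$: extending scalars gives $I(X^{2n-1}_k) \hookrightarrow I(X^{2n-1}_{k(i)})$ and a compatible equivariant map of RT-buildings, of the cube $\YY$, and of its total homotopy cofiber, which on homology lands in the Galois-invariant subspace since every class so obtained is already defined over $k$.

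The middle map is a four-step composition: (i) the hyperbolic map $\hyp : GL(n;k(i)) \to SO(n,n;k(i))$; (ii) conjugation by $D_n$ in the spherical case (resp.\ $D_n^{[1]}$ in the hyperbolic case), giving an isomorphism with $SO(2n;k(i))$ (resp.\ $SO(1,2n-1;k(i))$); (iii) projection onto the $-$-eigenspace of the conjugation action, which by $(\ref{eq:CO<->H})$ identifies $CO_{n,m}(k(i))^-$ with $H_m(I(X^{2n-1}_{k(i)}); \Q^\sigma)$; and (iv) the projection to the base $\theta_{m-n}$ from Theorem~\ref{thm:projbase}. That this descends from $H_\ast GL(n;k(i))$ to $CL_{n,m}(k(i))$ uses the chosen stabilization $SO(2n-2;k(i)) \hookrightarrow SO(2n;k(i))$ adding coordinates in positions $n$ and $2n$, which is precisely the stabilization compatible with the block-diagonal structure of the hyperbolic map.

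The main subtlety is Galois equivariance. Writing $c$ for the Galois generator and $F$ for the spherical composite, $\hyp$ is defined over $k$ and hence $c$-equivariant, but conjugation by $D_n$ is not: a direct computation shows $c \circ F = \operatorname{Ad}(\bar{D_n} D_n^{-1}) \circ F \circ c$, and $\det(\bar{D_n} D_n^{-1}) = (-i)^n/i^n = (-1)^n$. When $n$ is even, $\bar{D_n} D_n^{-1} \in SO(2n;k(i))$ so $\operatorname{Ad}$ acts trivially on $H_\ast SO(2n;k(i))$; when $n$ is odd it has determinant $-1$ and acts as the conjugation action $\beta$. Thus $c_\ast F_\ast = \beta_\ast^n F_\ast c_\ast$. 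Since $\Q^{n\tw}$ has $c$ acting by $(-1)^n$, the identification $(V \otimes \Q^{n\tw})^+ \cong V^{c=(-1)^n}$ shows that starting from a vector in $(CL_{n,m}(k(i)) \otimes \Q^{n\tw})^+$, the image under $F_\ast$ in the $\beta=-1$ summand has $c$-eigenvalue $(-1)^n \cdot (-1)^n = +1$; the Galois-equivariant $\theta_{m-n}$ then sends this into $(H_{m-n}\P_\ast(S^{2n-1}_{k(i)}))^+$. The hyperbolic case is analogous with $\det D_n^{[1]} = i^{n-1}$ producing a $\beta^{n-1}$ correction, which combined with $\Q^{n\tw}$ flips the source eigenspace from $+$ to $-$ while keeping the target $+$.

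The hardest part of the plan will be this Galois bookkeeping---specifically verifying that the determinant computations for $\bar{D_n} D_n^{-1}$ and $\bar{D_n^{[1]}}(D_n^{[1]})^{-1}$ exactly absorb the $\Q^{n\tw}$ twist and the flip between $+$ and $-$ on the source---together with checking that the hyperbolic map is compatible with the nonstandard stabilization underlying $(\ref{eq:CO<->H})$.
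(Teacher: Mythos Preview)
The paper states this proposition without an explicit proof; the preceding material (the surjection $CL_{n,m}\twoheadrightarrow\gr^{\rk}_n K_m$, the hyperbolic map, the conjugating matrices $D_n$ and $D_n^{[1]}$ with their determinants $i^n$ and $i^{n-1}$, the identification $CO_{n,m}^-\cong H_m(O(2n);\Q^\sigma)$ of~(\ref{eq:CO<->H}), and projection to the base $\theta_{m-n}$) is set up precisely so that the zigzag can be assembled from these pieces, and your proposal does exactly this.  Your Galois bookkeeping via $\det(\overline{D_n}D_n^{-1})=(-1)^n$ and $\det(\overline{D_n^{[1]}}(D_n^{[1]})^{-1})=(-1)^{n-1}$ is the intended mechanism and is carried out correctly.

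One point deserves a sentence more of justification than you give it.  You assert that the composite descends from $(H_m GL(n))_P$ to $CL_{n,m}$ because the hyperbolic map intertwines the two stabilizations; but compatibility with stabilization only gives you a map $CL_{n,m}\to CO_{n,m}$, not directly a map to $H_m(O(2n);\Q^\sigma)$.  The missing step is that after projecting to the $\beta=-1$ eigenspace the stabilization $SO(2n-2)\to SO(2n)$ factors through $H_m(O(2n-1);\Q^\sigma)=0$, so the image of $GL(n-1)$ dies---this is precisely the content of~(\ref{eq:CO<->H}) and the sentence preceding it, and you should cite it rather than leave the reader to reconstruct why stabilization-compatibility suffices.
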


To make the above analysis as satisfying as possible, it is desirable to prove
the following algebraic conjecture:  

\begin{conjecture}
  The map $H_*(SO(2n;k);\Q) \to \tensor[^+]{H_*(SO(2n;k(i));\Q)}{}$ is an
  isomorphism. 
\end{conjecture}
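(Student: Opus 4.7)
The plan is to reduce the conjecture via Hochschild--Serre to a Galois-descent problem, then attack it inductively using the RT-building structure developed in this paper. Let $\sigma$ generate $\Gal(k(i)/k) \cong \Z/2$, acting entry-wise on $SO(2n;k(i))$ with fixed subgroup $SO(2n;k)$, and form the semidirect product $\tilde G = SO(2n;k(i)) \rtimes \Z/2$. The Lyndon--Hochschild--Serre spectral sequence for the extension $SO(2n;k(i)) \lhd \tilde G \twoheadrightarrow \Z/2$ collapses rationally, since $|\Z/2|$ is invertible in $\Q$, giving
\[ H_*(\tilde G;\Q) \;\cong\; \tensor[^+]{H_*(SO(2n;k(i));\Q)}{}. \]
Since $\sigma$ fixes $SO(2n;k)$ pointwise, the subgroup $SO(2n;k) \times \Z/2$ embeds in $\tilde G$, and the $\Z/2$-factor contributes trivially to rational homology. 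Thus the conjecture is equivalent to showing that the inclusion $SO(2n;k) \hookrightarrow \tilde G$ is a rational equivalence on classifying spaces.

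I would next attack this reduced claim using the RT-building $F_\dotp^{S^{2n-1}}$ from Section~\ref{sec:geometry}. Base change $k \hookrightarrow k(i)$ produces a $\sigma$-equivariant simplicial map $F_\dotp^{S^{2n-1}_k} \to F_\dotp^{S^{2n-1}_{k(i)}}$, and by Galois descent of vector subspaces this lands bijectively onto the $\sigma$-fixed simplicial subset of the target. Computing both sides of the desired isomorphism via the homotopy orbit spectral sequence (Proposition~\ref{prop:hoss}) together with the skeletal filtration of the building reduces, cell by cell, to comparing stabilizer subgroups: for a $\sigma$-invariant flag $F$ the $\tilde G$-stabilizer is $\mathrm{Stab}_{SO(2n;k(i))}(F) \rtimes \Z/2$ while the $SO(2n;k)$-stabilizer is $\mathrm{Stab}_{SO(2n;k(i))}(F)^\sigma$. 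Crucially, $\mathrm{Stab}_{SO(2n;k(i))}(F)$ is a product of orthogonal groups of strictly smaller rank (one per graded piece of the flag) and the Galois action preserves this product. Applying the Künneth formula and a second Hochschild--Serre argument to each factor reduces the cell-by-cell comparison to instances of the same conjecture in lower dimensions, setting up a proof by induction on $n$.

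The main obstacle will be the base case $n=1$, comparing $SO(2;k)$ to $SO(2;k(i))$. Over $k(i)$ the form $x^2 + y^2$ diagonalizes and $SO(2;k(i)) \cong k(i)^\times$ (the split torus), while $SO(2;k)$ is the kernel of the norm $N : k(i)^\times \to k^\times$. The induction then requires verifying directly that
\[ H_*(SO(2;k);\Q) \rto \tensor[^+]{H_*(k(i)^\times;\Q)}{} \]
is an isomorphism, where the Galois action on the right is induced from $\sigma$ on $SO(2;k(i))$ via the (non-$\sigma$-equivariant) diagonalization. Because $k(i)^\times$ is abelian, the problem becomes one of identifying the $+$-eigenspace of the induced action on the exterior algebra $\Lambda^*(k(i)^\times \otimes \Q)$ with $\Lambda^*(SO(2;k) \otimes \Q)$. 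This is a purely algebraic computation involving the norm sequence $1 \to SO(2;k) \to k(i)^\times \xrightarrow{N} k^\times \to 1$, but the twist introduced by the non-Galois-equivariant diagonalization is subtle, and I suspect it is precisely this subtlety that explains why the statement appears here as a conjecture rather than as a theorem.
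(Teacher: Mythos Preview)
This statement is presented in the paper as a \emph{conjecture}, not a theorem; the paper gives no proof, only the remark that its truth would shorten the zigzags of Proposition~\ref{cor:zigzags}. There is therefore no argument in the paper to compare your attempt against.

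Your proposal is a strategy outline rather than a proof, and you already flag the base case $n=1$ as the obstruction. In fact that base case is worse than ``subtle'': it is \emph{false}, so the conjecture as literally stated cannot hold and no inductive attack can get off the ground. Take $k=\R$, $k(i)=\CC$. Under the isomorphism $SO(2;\CC)\cong\CC^\times$ you describe, the Galois involution becomes $z\mapsto \bar z^{-1}$, which in the rationalised polar decomposition $\CC^\times\otimes\Q\cong(\R/\Q)\oplus\R$ acts by $(+1,-1)$. Hence
\[
\tensor[^+]{H_2(SO(2;\CC);\Q)}{} \;=\; \big(\Lambda^2((\R/\Q)\oplus\R)\big)^+ \;=\; \Lambda^2(\R/\Q)\ \oplus\ \Lambda^2(\R),
\]
whereas $H_2(SO(2;\R);\Q)=\Lambda^2(\R/\Q)$. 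The surplus summand $\Lambda^2(\R)$ is nonzero (indeed infinite-dimensional over $\Q$), so the map is not surjective. The same computation goes through for any $k$ with $\dim_\Q(k^\times\otimes\Q)\geq 2$. Whatever the authors intend, the statement must be restricted in degree, in $n$, or reformulated before it can be true.

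Even setting the base case aside, the inductive step has a gap. The stabiliser of a flag in $F_\dotp^{S^{2n-1}}$ is, up to a determinant constraint, a product $\prod_j O(a_j)$ with $\sum_j a_j=2n$ and the $a_j$ arbitrary positive integers. Your induction hypothesis covers only $SO(2m)$, so the K\"unneth reduction lands you in a statement about orthogonal groups of all parities --- and, since the determinant constraint mixes the factors, about both the trivial and the $\det$-twisted coefficient systems. You would need to formulate and carry a stronger hypothesis (for $O(m)$ with both coefficient systems, all $m<2n$) before the cell-by-cell comparison could close.
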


If this conjecture were true then the zigzags in Corollary~\ref{cor:zigzags}
would be shortened to a length-$2$ because the rightmost inclusions would be
isomorphisms.

\section{Proof of Theorem~\ref{thm:reallycool}} \label{sec:twoproofs}

In this section we prove Theorem~\ref{thm:reallycool}.  First, some notation.
Write $\hat \I_d$ for the category whose objects are sequences
$\vec A = (b,a_1,\ldots,a_i)$ of nonnegative integers such that
$b+a_1+\cdots+a_i = d$ and all $a_i$ are positive, and morphisms defined as in
Definition~\ref{def:I}.  Recall the definition of $F^{\vec A}_\dotp$ in
Definition~\ref{def:flagspace}.  Let $\Phi^\eqvt: \hat\I_n \rto \Top_*$ be
defined by
\[\Phi^\eqvt(\vec A) \defeq S^\tw \smash F_\dotp^{\vec A}\]
and $\Phi:\hat\I_d \rto \Top_*$ be defined by
\[\Phi(\vec A) = \Phi^\eqvt(\vec A)_{hI(X)}.\] The functor $\Phi^\eqvt$ is defined the same as
the definition of $\YY$ in Definition~\ref{def:Dehnspace}, extended from $\I_d$ to
$\hat \I_d$.  Define
\[Z^\eqvt \defeq \tcofib \Phi^\eqvt \qqand Z \defeq \tcofib \Phi.\]   Then (as
total homotopy cofibers and homotopy coinvariants commute)
\[(Z^\eqvt)_{hI(X)} \simeq Z.\]

Surprisingly, it is possible to identify the homotopy type of $Z^\eqvt$.
\begin{proposition} \label{prop:Zid}
  There is an $I(X)$-equivariant weak equivalence 
  \[Z^\eqvt \simeq S^\tw \smash S^{d}.\]
  Here, the $I(X)$-action is trivial on the $S^{d}$-coordinate and acting by
  the determinant on $S^\tw$.
\end{proposition}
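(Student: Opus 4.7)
The plan is to identify $Z^\eqvt$ by showing it is a simply-connected $I(X)$-space whose reduced integral homology is $\Z^\tw$ concentrated in degree $d+1$, and then producing an equivariant weak equivalence from $S^\tw \smash S^d$. Simple connectivity is immediate: each vertex $\Phi^\eqvt(\vec A) = S^\tw \smash F^{\vec A}_\dotp$ is a suspension (since $S^\tw \simeq S^1$), and total homotopy cofibers of cubical diagrams of simply-connected pointed spaces are simply connected.

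For the homological input, Proposition~\ref{prop:singleH} combined with Hurewicz gives that each $F^W_\dotp$ is equivalent to a wedge of $(\dim W)$-spheres, and iterating Lemma~\ref{lem:sma->join} (that $X \redjoin Y \simeq \Sigma(X \smash Y)$) realizes each $F^{\vec A}_\dotp$ for $\vec A = (b, a_1, \ldots, a_i)$ as a wedge of $d$-spheres, since the relevant dimensions add as $b + \sum_j (a_j - 1) + i = d$. Thus every $\Phi^\eqvt(\vec A)$ has reduced homology concentrated in degree $d+1$, so the spectral sequence of Section~\ref{subsec:thocofib} for the total homotopy cofiber of a cube collapses at $E^2$, and $\tilde H_*(Z^\eqvt)$ is the homology of the total complex on $\hat\I_d$ with terms $\tilde H_{d+1}(\Phi^\eqvt(\vec A))$ and differentials induced by the Dehn invariants.

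The crux is computing this total complex. Each term splits as a direct sum indexed by orthogonal decompositions of $X$ of the prescribed shape, and the Dehn differential acts on those sums by ``refine by one step.'' The plan is to reorganize the whole thing as a bicomplex, one direction indexed by $\hat\I_d$ and the other by refinements of a fixed orthogonal decomposition, and then recognize the latter direction as the reduced chain complex of a contractible poset, with contracting homotopy supplied by a cone construction over a fixed orthogonal basis of $X$. What survives is a single copy of $\Z^\tw$ in degree $d+1$, on which $I(X)$ can act only through the determinant character inherited from the $S^\tw$-factor. An explicit generator then yields the desired equivariant map $S^\tw \smash S^d \to Z^\eqvt$, which is a weak equivalence by the Whitehead theorem.

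The main obstacle is controlling signs and $I(X)$-equivariance in this combinatorial contraction. The paper's promise of two proofs suggests an alternative by induction on $d$: peeling off the outermost Dehn invariant $D_U$ for a subspace $U \subseteq X$ rewrites $Z^\eqvt$ as built from analogous total cofibers for the lower-dimensional neat geometries $U$ and $U^\perp$, reducing to the base case $d = 0$ where $\hat\I_0 = \{(0)\}$ and $\Phi^\eqvt((0)) = S^\tw \smash S^0 = S^\tw$ directly.
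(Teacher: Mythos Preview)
Your approach via the total-cofiber spectral sequence is quite different from the paper's, and the final passage from a homology computation to an equivariant weak equivalence contains a genuine gap.

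The paper never passes to homology. Its key observation (Proposition~\ref{prop:totcofib}) is that for each $I \subseteq \{0,\ldots,d-1\}$ the iterated Dehn invariant $D_I$ is \emph{isomorphic}, as a map of $I(X)$-simplicial sets, to the quotient map
\[
  F^X_\dotp \longrightarrow F^X_\dotp \Big/ \bigcup_{i \in I} N_{\{i\}} F^X_\dotp,
\]
where $N_{\{i\}} F^X_\dotp \subseteq F^X_\dotp$ is the subcomplex of flags avoiding dimension $i$. A short general lemma (Lemma~\ref{lem:quottcofib}) identifies the total homotopy cofiber of any such ``quotient cube'' as $\Sigma^{|I|} \bigcap_{i} N_{\{i\}} F^X_\dotp$. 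For $I = \{0,\ldots,d-1\}$ this intersection is the subcomplex of flags containing \emph{no} proper subspace of $X$: just the basepoint and the constant flag $X = \cdots = X$, i.e.\ $S^0$ with trivial $I(X)$-action. Smashing with $S^\tw$ gives $Z^\eqvt \simeq S^\tw \smash S^d$ directly at the level of $I(X)$-simplicial sets.

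Your argument has two soft spots. First, the assertion that $I(X)$ ``can act only through the determinant character inherited from the $S^\tw$-factor'' is not justified: the group also acts nontrivially on each $F^{\vec A}_\dotp$ by moving flags and permuting orthogonal decompositions, so the module structure on the surviving homology must actually be computed. Second, and more seriously, even granting $\tilde H_*(Z^\eqvt) \cong \Z^\tw$ in degree $d{+}1$, an ``explicit generator'' only gives a nonequivariant map $S^{d+1} \to Z^\eqvt$; upgrading this to an $I(X)$-equivariant map $S^\tw \smash S^d \to Z^\eqvt$ is not automatic and would require either an explicit equivariant cycle or an equivariant obstruction argument comparing fixed-point data. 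The paper avoids both issues by working entirely with natural constructions on $I(X)$-simplicial sets, so equivariance comes for free.
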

As the proof of this is technical we postpone it to the end of the section; for
now we assume it and complete the proof of Theorem~\ref{thm:reallycool}.  Note
that this proposition is an integral statement; it is not necessary to invert
$2$.

We now characterize $Z_{hI(X)}$ from a different perspective.
For $\vec A = (b,a_1,\ldots,a_i)$, by Lemma~\ref{lem:fratwe},
\[\left( S^\tw \smash F^{\vec A}_\dotp \right)_{hI(X)} \mathrel{\simeq_{[2]}} (S^\tw\smash
  F_\dotp^W)_{hI(X^b)} \smash \bigwedge_{j=1}^i (S^\tw \smash
  F_\dotp^{V_j})_{hO(a_j)} .\] By Proposition~\ref{prop:evencontr}, if any of
the $a_j$ are odd then
$(S^\tw \smash F_\dotp^{V_i})_{hO(a_i)} \mathrel{\simeq_{[2]}} *$; thus if
$\vec A$ has some $a_j$ odd then $\Phi(\vec A)$ is contractible.  For any atomic
morphism
$\iota: (b,a_1,\ldots,a_i) \to
(b,a_1,\ldots,a'_\ell,a''_\ell,a_{\ell+1},\ldots,a_i)$ (where
$a_\ell = a'_\ell + a''_\ell$) we say that the morphism is \textsl{in direction
  $r$} if
\[r = a''_\ell + a_{\ell+1} + \cdots + a_i.\] If $r$ is odd then
$\Phi(b,a_1,\ldots,a'_\ell,a''_\ell,\ldots,a_i)_{hI(X)}$ is contractible; thus
all morphisms in $\Phi(\hat\I_d)$ in odd directions have contractible codomain.
Note that $\I_d$ is exactly the subcategory of $\hat\I_d$ containing all atomic
morphisms in even directions.

Note that there are $\lfloor \frac{d-1}{2} \rfloor$ even directions and
$\lfloor \frac{d+1}{2} \rfloor$ odd directions.

It is possible to compute total homotopy cofibers \textsl{iteratively}: taking
all cofibers in a single direction $r$, it produces a cube one dimension lower;
the total homotopy cofiber of this cube is equivalent to the total homotopy
cofiber of the original cube.  Take homotopy cofibers in all of the even
directions first: this leaves a $\lfloor \frac{d+1}{2} \rfloor$-cube with a
single entry $\sY^X_{hI(X)}$ (at the source) and all other entries contractible;
since the homotopy cofiber of any map $X \to *$ is $\Sigma X$,
\[Z \simeq \Sigma^{\lfloor\frac{d+1}{2}\rfloor}(\sY^X_{hI(X)}).\]

By the homotopy orbit spectral sequence (see Proposition~\ref{prop:hoss}) and
Proposition~\ref{prop:Zid},
\[H_i \left((Z^\eqvt)_{hI(X)};\Zinv\right) \cong H_{i-(d+1)}\left(I(X);
    \Zinv^\tw\right).\] Thus
\[H_i\left(\sY_{hI(X)};\Zinv\right) \cong H_{\lfloor
    \frac{d+1}{2}\rfloor+i}\left(Z;\Zinv\right) \cong
  H_{\lfloor\frac{d+1}{2}\rfloor+i}\left((Z^\eqvt)_{hI(X)};\Zinv\right) \cong
  H_{i-\lfloor\frac{d-1}{2}\rfloor}\left(I(X);\Zinv^\tw\right),\] completing the proof of the theorem.\qed

It remains to prove Proposition~\ref{prop:Zid}.% , for which we present two proofs.
% The first is more conceptual, and produces a proof of the Solomon--Tits theorem
% along the way. The second is based on the combinatorics of the simplicial sets
% $S^\tw \smash F_\dotp^{\vec A}$; it is direct and relies on very few
% technicalities.  It also allows us to directly compute a closed form for
% $\epsilon_{n-1}$.  Although the basic underlying ideas of the proofs are quite
% similar, the approaches differ enough that we felt is was useful to present both
% approaches.

We begin by computing the homotopy cofiber of a single Dehn invariant.  In order
to be able to do this for any general map in the cube, it is necessary to
generalize the definition of the Dehn invariant.

Let $W,U_1,\ldots,U_i$ be any decomposition of $X$ into orthogonal subspaces.
Define
\[d_j = \dim W + \sum_{\ell=1}^j \dim U_\ell \qquad j \geq 0.\] (Thus
$d_0 = \dim W$ and $d_i = \dim X$.)  Let $\ell$ be an integer distinct from
$d_1,\ldots,d_i$.  Let $j$ be the minimal index such that $d_j > \ell$.  For
convenience, define
\[D_\ell: F_\dotp^W \redjoin F_\dotp^{U_1} \redjoins F_\dotp^{U_i} \rto \bigvee_{\substack{V_j
      \subseteq U_j \\ \dim V_j = \ell-d_{j-1}}} F_\dotp^W \redjoin F_\dotp^{U_1} \redjoins F_\dotp^{V_j}
  \redjoin F_\dotp^{V_j^\perp\cap U_j} \redjoins F_\dotp^{U_i}\]
to be $1 \redjoins D_{\ell-d_{j-1}} \redjoins 1$.

\begin{definition}
  For any subset $I \subseteq \{1, \ldots,d\}$ let $N_IF_\dotp^X$ be the subspace of
  $F_\dotp^X$ containing no subspace with dimension contained in $I$.
\end{definition}

This definition gives a convenient way to identify the total homotopy cofiber of
a Dehn cube.

\begin{lemma} \label{lem:quottcofib} Let $X$ be a pointed simplicial set, and
  let $Y_1,\ldots,Y_n$ be subspaces of $X$.  Write $P(n)$ for the partial order
  of subsets of $\{1,\ldots,n\}$.  Define a functor
  \[F:P(n) \rto \Top_*  \qquad \hbox{by} \qquad I \rgoesto X\bigg/\bigcup_{i\in I}
    Y_i,\]
  with the induced morphisms given by the quotient maps.  Then
  \[\tcofib F \simeq \Sigma^n \bigcap_{i=1}^n Y_i.\]
\end{lemma}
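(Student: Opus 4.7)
The plan is to induct on $n$, using the fact that the total homotopy cofiber of an $n$-cube can be computed iteratively: one takes homotopy cofibers in a single direction first, leaving an $(n-1)$-cube whose total homotopy cofiber is equivalent to that of the original cube. This reduces the lemma to identifying the $(n-1)$-cube one obtains after collapsing along the last coordinate, and matching it with the inductive hypothesis.

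The base case $n = 1$ is immediate: the cube is the map $X \to X/Y_1$, whose homotopy cofiber is $\Sigma Y_1$ (from the cofiber sequence $Y_1 \rto X \rto X/Y_1$). For the inductive step, fix $n \geq 2$ and collapse the cube $F$ in direction $n$. The edges in direction $n$ are quotient maps of the form
\[X\bigg/\bigcup_{i\in I} Y_i \rto X\bigg/\bigg(Y_n\cup \bigcup_{i\in I} Y_i\bigg),\]
with $I \subseteq \{1,\dots,n-1\}$. Since the target quotients the source by a cofibrant subspace, the cofiber sequence $B/A \to X/A \to X/B$ (for $A \subseteq B \subseteq X$) identifies the homotopy cofiber of this edge with
\[\Sigma\bigg(\bigg(Y_n \cup \bigcup_{i\in I}Y_i\bigg)\bigg/\bigcup_{i\in I}Y_i\bigg) \;\simeq\; \Sigma\bigg(Y_n\bigg/\bigcup_{i\in I}(Y_n\cap Y_i)\bigg),\]
where the second equivalence uses the standard excision identity $(A\cup B)/A \simeq B/(A\cap B)$.

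After pulling the suspension out of each vertex, the residual $(n-1)$-cube is precisely $\Sigma$ applied to the cube built from $Y_n$ and its subspaces $Y_n \cap Y_1,\dots,Y_n\cap Y_{n-1}$ in exactly the form of the lemma. The inductive hypothesis gives the total homotopy cofiber of that smaller cube as $\Sigma^{n-1}\bigcap_{i=1}^{n-1}(Y_n \cap Y_i) = \Sigma^{n-1}\bigcap_{i=1}^{n} Y_i$. Since suspension commutes with the remaining total homotopy cofiber, we obtain $\Sigma^n \bigcap_{i=1}^n Y_i$ as desired.

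The only delicate point is justifying the clean identification of $\hocofib(X/A \to X/B)$ with $\Sigma(B/A)$ and the excision $(A\cup B)/A \simeq B/(A\cap B)$. Both are standard in the pointed simplicial set category where every inclusion is a cofibration, so these strict quotients model the homotopy cofibers correctly; still, this is the one place where one must be explicit about the bookkeeping in order to see that iterating in direction $n$ really does yield the cube that the inductive hypothesis expects.
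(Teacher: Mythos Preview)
Your proof is correct and follows essentially the same approach as the paper: induct on $n$, compute the total homotopy cofiber iteratively by collapsing in direction $n$, identify the resulting $(n-1)$-cube (via excision) as the same setup with $Y_n$ in place of $X$ and the $Y_n\cap Y_i$ in place of the $Y_i$, and then apply the inductive hypothesis together with the fact that suspension commutes with total homotopy cofibers. The paper phrases the identification of the intermediate cube slightly differently (working inside $\Sigma X$ with $\Sigma Y_i \cap \Sigma Y_n$), but the argument is the same.
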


\begin{proof}
  We prove this by induction on $n$.  When $n = 0$ the cube is trivial and the
  statement holds.  When $n =1$ the cube is $X \rto X/Y$, and the total homotopy
  cofiber is $\Sigma Y$, as desired.

  Now consider the general case.  The total homotopy cofiber can be computed
  iteratively \cite[Proposition 5.9.3]{munsonvolic} by first taking cofibers in
  the direction of ``adding $n$ to a set'': the morphisms in which each subset
  $J \in P(n-1)$ is mapped to $J \cup \{n\}$.  Taking the homotopy cofiber for
  each such $J$ produces the cube $G: P(n-1) \rto \Top_*$ given by
  \[J \rgoesto \Sigma Y_n \bigg/ \bigcup_{j\in J} \Sigma (Y_j\cap Y_n).\]
  This is an $n-1$-cube of the same type as in the
  proposition; by the induction hypothesis, 
  \[\tcofib G \simeq \Sigma^{n-1} \bigcap_{i=1}^{n-1} \Sigma (Y_i \cap Y_n).\]
  $\Sigma (Y_i \cap Y_n)$ sits inside $\Sigma X$ as
  $\Sigma Y_i \cap \Sigma Y_n$; then
  \[\Sigma^{n-1} \bigcap_{i=1}^{n-1} \Sigma(Y_i\cap Y_n) = \Sigma^{n-1} \bigcap_{i=1}^n (\Sigma Y_i) \cap
    (\Sigma Y_n) =\Sigma^{n-1} \bigcap_{i=1}^n \Sigma Y_i = \Sigma^n
    \bigcap_{i=1}^n Y_i,\]
  as desired.
\end{proof}

\begin{proposition} \label{prop:totcofib} Let $I \subseteq \{1,\ldots,d\}$.
  Consider the sub-$|I|$-cube formed by $D_i$ for $i\in I$ and containing the
  initial point.  This cube has total homotopy cofiber
  $\Sigma^{|I|} N_IF_\dotp^X$.
\end{proposition}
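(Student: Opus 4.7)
The plan is to reduce Proposition~\ref{prop:totcofib} to Lemma~\ref{lem:quottcofib} by showing that the sub-$|I|$-cube formed by the Dehn invariants $D_i$ for $i\in I$ is isomorphic, as a cube of pointed simplicial sets, to the quotient cube $J \mapsto F_\dotp^X/\bigcup_{i\in J} N_{\{i\}}F_\dotp^X$ with the evident quotient maps. Once this identification is in hand, Lemma~\ref{lem:quottcofib} (applied with $Y_i \defeq N_{\{i\}}F_\dotp^X$) gives the total homotopy cofiber as $\Sigma^{|I|}\bigcap_{i\in I}N_{\{i\}}F_\dotp^X = \Sigma^{|I|}N_I F_\dotp^X$, the last equality holding because a flag avoids every dimension in $I$ if and only if it individually avoids each $i \in I$.

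The key step is to realize each individual Dehn invariant $D_i$ as a quotient map, up to a natural isomorphism of its target. Every nondegenerate simplex of $F_\dotp^X$ is a \emph{strictly} increasing chain $U_0 \subsetneq \cdots \subsetneq U_m = X$, and therefore contains at most one subspace of a given dimension $i$. Using the formula for $D_i$, I would exhibit a bijection between such nondegenerate chains (with $U_j = U$ of dimension $i$) and nondegenerate simplices $(U_0 \subsetneq \cdots \subsetneq U_j)\smash(\pr_{U^\perp}U_{j+1}\subsetneq \cdots \subsetneq U^\perp)$ of $\bigvee_{\dim U = i} F_\dotp^U \redjoin F_\dotp^{U^\perp}$; the inverse reassembles by forming direct sums with $U$. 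Flags in $N_{\{i\}}F_\dotp^X$---those lacking an $i$-dimensional subspace---are precisely those sent to the basepoint by $D_i$, so this bijection descends to the desired isomorphism $F_\dotp^X/N_{\{i\}}F_\dotp^X \cong \bigvee_{\dim U=i} F_\dotp^U \redjoin F_\dotp^{U^\perp}$.

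To iterate, I would apply the same splitting argument inductively. The vertex of the cube at $J = \{i_1 < \cdots < i_k\} \subseteq I$ is a wedge over nested flags $U_1 \subsetneq \cdots \subsetneq U_k$ with $\dim U_s = i_s$ of multi-reduced-joins $F_\dotp^{U_1} \redjoin F_\dotp^{U_1^\perp \cap U_2} \redjoin \cdots \redjoin F_\dotp^{U_k^\perp}$, by repeated application of the diagrams \ref{diag:derdehnsquare}. Nondegenerate flags in $F_\dotp^X$ containing a subspace of \emph{every} dimension in $J$ biject with such tuples of flags, producing an isomorphism with $F_\dotp^X/\bigcup_{i \in J} N_{\{i\}}F_\dotp^X$. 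The edge maps of the cube, being further Dehn invariants, correspond to the evident quotient maps between these quotients, completing the setup for Lemma~\ref{lem:quottcofib}.

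The main obstacle is bookkeeping: one must verify that the bijections in the single-step and iterated cases are genuinely compatible with the simplicial face and degeneracy maps---including the basepoint-collapsing edge cases in the definition of the reduced join---and that the quotient-map structure is preserved across the cube. This should be routine given the combinatorial nature of both sides, but requires careful attention to the asymmetric conventions of $\redjoin$ and to the fact that iterated Dehn invariants decompose $X$ into a sum of orthogonal pieces in a prescribed left-to-right order.
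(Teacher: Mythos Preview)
Your proposal is correct and follows essentially the same approach as the paper: identify each vertex of the Dehn sub-cube with the quotient $F_\dotp^X/\bigcup_{i\in J}N_{\{i\}}F_\dotp^X$ via an explicit ``reassembly'' isomorphism, then apply Lemma~\ref{lem:quottcofib} together with $N_IF_\dotp^X = \bigcap_{i\in I}N_{\{i\}}F_\dotp^X$. The paper writes down the isomorphism directly for an arbitrary $J$ rather than first doing the $|J|=1$ case and iterating, but the content is the same, and your caveat about bookkeeping with degenerate simplices and the asymmetric $\redjoin$ conventions is exactly the routine verification the paper leaves implicit.
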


\begin{proof}
  For conciseness, write $D_I$ for the composition of the $D_i$ for $i\in I$.
  Since the Dehn cube commutes, the order of composition is irrelevant.  Let
  $I = \{i_0,\ldots,i_{j-1}\}$.  We claim that
  \[D_I: F_\dotp^X \rto \bigvee_{\substack{W\oplus^\perp U_1 \oplus^\perp \cdots
        \oplus^\perp U_j = X \\ \dim W = i_0 \\ \dim U_\ell =
        i_\ell-i_{\ell-1}-\cdots-i_0\ \ell < j}} F_\dotp^W \redjoin F_\dotp^{U_1} \redjoins
    F_\dotp^{U_j}\] is isomorphic to the map
  \[F_\dotp^X \rto F_\dotp^X/\bigcup_{i\in I}N_{\{i\}}F_\dotp^X\]
  via the isomorphism 
  \[\bigvee_{\substack{W\oplus^\perp U_1 \oplus^\perp \cdots
        \oplus^\perp U_j = X \\ \dim W = i_0 \\ \dim U_\ell = i_\ell\ \ell < j}}
    F_\dotp^W \redjoin F_\dotp^{U_1} \redjoins F_\dotp^{U_j} \rto
    F_\dotp^X/\bigcup_{i\in I} N_{\{i\}}F_\dotp^X\]
  taking an $\ell_j$-simplex 
  \[(U_0 \subsets U_{\ell_0}, U_{\ell_0+1} \subsets U_{\ell_1}, \ldots,
    U_{\ell_{j-1}+1}\subsets  U_{\ell_j})\]
  to $\ell_j$-simplex corresponding to the flag
  \[U_0 \subsets U_{\ell_0} \subset U_{\ell_0}\oplus U_{\ell_0+1} \subseteq
    U_{\ell_0} \oplus U_{\ell_0+2} \subsets U_{\ell_0}\oplus \cdots \oplus
    U_{\ell_j}.\] Every flag in the image contains subspaces of all dimensions
  contained in $I$, and any face map that removes one of them takes the simplex
  to the basepoint.  This map is bijective on simplices, showing that it is an
  isomorphism of simplicial sets.

  Since 
  \[N_IF_\dotp^X \cong \bigcap_{i\in I} N_{\{i\}}F_\dotp^X,\] the Dehn cube is
  isomorphic to a cube of the form in Lemma~\ref{lem:quottcofib}.  Applying the
  lemma we see that the total homotopy cofiber is
  \[\Sigma^{|I|}\bigcap_{i=1}^{n-1} N_{\{i\}} F_\dotp^X = \Sigma^{|I|}N_IF_\dotp^X,\]
  as desired.
\end{proof}

This can be used to prove Proposition~\ref{prop:Zid}:
\begin{proof}[Proof of Proposition~\ref{prop:Zid}]
  By Proposition~\ref{prop:totcofib},
  $Z \simeq S^\tw \smash \Sigma^{d}N_{\{0,1,\ldots,d-1\}}F_\dotp^X$.  However,
  $N_{\{0,1,\ldots,d-1\}}F_\dotp^X \cong S^0$, as it has exactly two simplices in each
  dimension: the basepoint and $X = \cdots = X$.  The functor $S^\sigma
  \smash\cdot$ commutes with taking homotopy cofibers; thus
  \[Z  \simeq S^\tw \smash S^{d}.\]
\end{proof}

To finish up this section we use this calculation to prove
Lemma~\ref{lem:edge-desc}.  First, a few definitions.  Denote by $\vec g$ a
tuple $(g_1,\ldots,g_j)$ of elements in $I(X)$; this tuple can be of any length
$j$.  For $0 \leq \ell \leq j$ and a coefficient $m\in \Zinv$, define the
notation
\[d_\ell(m \vec g) \defeq
  \begin{cases}
    m(\det g)(g_2,\ldots,g_j) \caseif \ell = 0\\
    m(g_1,\ldots,g_{\ell+1}g_\ell,\ldots,g_j) \caseif 1 \leq \ell < j \\
    m(g_1,\ldots,g_{j-1}) \caseif j = \ell.
  \end{cases}\]
Write, for $1 \leq a \leq b \leq j$,
\[\Pi_a^b \vec g\defeq g_b\cdots g_a \qqand \amalg_a^b \vec g \defeq
  g_a^{-1}\cdots g_b^{-1} = \left(\Pi_a^b \vec g\right)^{-1}.\]

The double complex $C_{**}$ is a homologically-graded double complex in which
for $j \geq 0$ and $0 \leq i < d$ the group $C_{ij}$ is generated by symbols
of the form
\[(g_1,\ldots,g_j)\{x_1|\cdots|x_i\},\]
where $g_1,\ldots,g_j\in I(X)$ and $x_1,\ldots,x_i\in X$.
When $j$ is clear from context we sometimes write this $\vec g
\{x_1|\cdots|x_i\}$. 
Define boundary maps $\partial^h:C_{ij} \to C_{(i-1)j}$ and $\partial^v:
C_{ij} \to C_{i(j-1)}$ by 
\begin{align*}
  \partial^h (\vec g\{x_0|\cdots|x_i\}) &\defeq
                                          \vec g\sum_{\ell=0}^{i}
                                          (-1)^\ell
                                          \{x_0|\cdots|\widehat
                                          x_\ell | \cdots | x_i\} \qquad \hbox{and}
  \\
  \partial^v(\vec g\{x_0|\cdots|x_i\}) &\defeq
                                                   (d_0\vec g)\{g_1x_0|\cdots|g_1x_i\}
                                                   +
                                                   \sum_{\ell=1}^{j-1}(-1)^\ell
                                                   (d_\ell\vec g)
                                                   \{x_0|\cdots|x_i\}.
\end{align*}

\begin{lemma} \label{lem:tech-comp} Let $\sigma \defeq \sum_i m_i \vec g^{(i)}$
  represent a cycle in $H_{d}(I(X);\Zinv^\tw)$ and fix any point $x\in X$.
  Inside the total complex of $C_{**}$ the cycle $\sum_i m_i \vec g^{(i)}\{\}$
  is homologous to
  \[(-1)^{d+1}\sum_i m_i (\det \Pi_1^{d} \vec g^{(i)})\sum_{\ell=0}^{d} (-1)^\ell
    \Big\{x \Big| g_{d}^{(i)}\cdot x \Big| \cdots\Big| \widehat{\Pi_{d-\ell+2}^{d} \vec g^{(i)}\cdot x} \Big| \cdots \Big|
     \Pi_2^{d} \vec g^{(i)}\cdot  x \Big|
     \Pi_1^{d} \vec g^{(i)}\cdot  x\Big\},\] (where the $\ell=0$ term removes
  the $g_1^{(i)}x$).
\end{lemma}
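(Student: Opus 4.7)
The plan is to construct an explicit chain $\eta$ of total degree $d+1$ in the total complex of $C_{\ast\ast}$ whose total differential is the difference of the two chains in the claim. Both chains live on the total-degree-$d$ diagonal: $\sigma\{\,\} \defeq \sum_i m_i \vec g^{(i)}\{\}$ sits in $C_{0,d}$, and the claimed target sits in $C_{d,0}$. A quick direct check shows both are total cycles: the former because $\sigma$ is a bar cycle and $\partial^h$ vanishes tautologically on an empty vertex list; the latter because it is (up to a scalar) the simplicial boundary of the $d$-simplex with vertices $\Pi_{d-k+1}^d\vec g\cdot x$, and because its bar-direction differential is zero since the tuple of group elements is empty.

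To build $\eta$, I would interpolate through intermediate chains $\xi_k \in C_{k,\,d-k}$ for $0 \leq k \leq d$, where $\xi_k$ is obtained from $\xi_0 = \sigma\{\,\}$ by ``converting'' the last $k$ group arguments into $k$ additional simplicial vertices:
\[\xi_k = \sum_i m_i\,\epsilon_k^{(i)}\,(g_1^{(i)},\ldots,g_{d-k}^{(i)})\bigl\{x \,\big|\, g_d^{(i)}\cdot x \,\big|\, \ldots \,\big|\, \Pi_{d-k+1}^{d}\vec g^{(i)}\cdot x\bigr\}\]
for suitable signs and twist factors $\epsilon_k^{(i)}$. For each $k$, the natural connecting chain $\tau_k \in C_{k-1,\,d-k+1}$ has one extra group argument and the same vertex list as $\xi_k$; a direct computation shows $\partial^{\mathrm{tot}}\tau_k = \xi_k - \xi_{k-1}$ modulo interior terms that either cancel pairwise by simplicial identities or vanish after summing over $i$ (using the cycle property $\partial^v \sigma = 0$ of the bar representative). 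Summing telescopes to $\partial^{\mathrm{tot}}\bigl(\sum_k \tau_k\bigr) = \xi_d - \xi_0$, and expanding the single remaining horizontal face of $\xi_d$ produces the alternating sum $\sum_{\ell=0}^{d}(-1)^\ell$ of omitted vertices in the right-hand side of the statement.

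The main obstacle is keeping the signs straight. Each application of the vertical face $d_0$ (which is what ``exposes'' a group element so it can be converted to a vertex) contributes a factor of $\det g_1$ by the $\Zinv^\sigma$-twist; accumulating these across the $d$ successive conversion steps produces the prefactor $\det\Pi_1^{d}\vec g^{(i)}$ appearing in the statement. Combining these twist factors with the alternating signs from $\partial^h$ and $\partial^v$ and with the Koszul rule for $\partial^{\mathrm{tot}}$ yields the overall $(-1)^{d+1}$. Once the signs are verified, the rest is essentially the standard chain-level proof that the diagonal map from the bar complex of $G$ to the simplicial chains on a $G$-set induces an isomorphism on $H_\ast(G;-)$, specialized here to $G = I(X)$ acting on $X$ with coefficients in $\Zinv^\sigma$.
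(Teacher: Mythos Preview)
Your overall strategy---a staircase argument moving the cycle from $C_{0,d}$ to $C_{d,0}$ by converting bar variables into simplicial vertices one at a time, with the cycle condition on $\sigma$ absorbing the cross terms---is exactly the paper's approach. The gap is in the execution: the specific chains $\xi_k$ and $\tau_k$ you write down are too simple, and the ``interior terms'' do not cancel the way you claim.

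Concretely, take your $\xi_0$. Your formula gives $\xi_0=\sum_i m_i\,\vec g^{(i)}\{x\}$ (one vertex), which already disagrees with your assertion $\xi_0=\sigma\{\,\}$. More seriously, compute $\partial^v\xi_0$: the face $d_0$ replaces the vertex $x$ by $g_1^{(i)}x$, so after using the cycle relation $\sum_i m_i\sum_\ell(-1)^\ell d_\ell\vec g^{(i)}=0$ you are left with $\sum_i m_i\,(d_0\vec g^{(i)})\bigl[\{g_1^{(i)}x\}-\{x\}\bigr]$, not $\pm\xi_1$. The cycle condition cannot be invoked again at the next step, because your truncated tuple $(g_1^{(i)},\ldots,g_{d-k}^{(i)})$ only has $d-k+1$ faces, not the $d+1$ that the relation needs. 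There is also a structural mismatch: the $d_0$-face acts on all vertices by $g_1^{(i)}$, and applying $g_1$ to a forward product $\Pi_m^d\vec g\cdot x$ does \emph{not} give a forward product for any shortened tuple, so the pattern of your $\xi_k$ is not preserved.

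The paper repairs both issues at once. It writes the intermediate vertices using \emph{inverse} products $\amalg_1^m\vec g\cdot x=g_1^{-1}\cdots g_m^{-1}x$, so that the $d_0$-action $g_1\cdot(\amalg_1^m\vec g\cdot x)=\amalg_2^m\vec g\cdot x$ exactly matches the pattern for the shortened tuple. And instead of a single chain $\xi_k$ per index, the paper's intermediate chain $\alpha^\lambda$ is a sum of terms $\Delta^\lambda(d_\ell\vec g^{(i)},-)$ over \emph{all} bar faces $d_0,\ldots,d_d$ of the original $d$-tuple. Because every $\alpha^\lambda$ already sees the full set of faces, the cycle condition kills the residual terms (called $B^{\lambda-1}$ in the paper) at every step, giving the clean relation $\partial^v\alpha^\lambda=\partial^h\alpha^{\lambda-1}$ and hence a genuine telescoping sum. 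Only at the very end, once everything has been pushed to $C_{d,0}$, does the paper act by $\Pi_2^d\vec g^{(i)}$ on each summand (using that each is already a horizontal cycle) to convert from the $\amalg$-form to the $\Pi$-form in the statement. Your intuition about the $\det$ factors accumulating to $\det\Pi_1^d\vec g^{(i)}$ is correct, but it is this more elaborate bookkeeping---not a direct staircase on your $\xi_k$---that makes the argument close.
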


\begin{proof}
  Fix $x\in S^d$.  For any $\vec g = (g_1,\ldots,g_j)$, any
  $1 \leq \lambda \leq j\}$, and any point $y\in S^d$, and any $m \in \Zinv$, define
  \begin{align*}
    \Delta^\lambda(m\vec g,y) &\defeq m(g_1,\ldots,g_\lambda)\Big\{ y \Big| \amalg_1^j \vec
    g \cdot x
    \Big| \amalg_1^{j-1}\vec g \cdot x \Big| \cdots \Big| \amalg_1^\lambda \vec
                                g \cdot x\Big\}
    \in C_{(j-\lambda+2)\lambda} \\
    B^\lambda(m(a_1,\ldots,a_j)) &\defeq m(g_1,\ldots,g_{\lambda})\Big\{
                                \amalg_1^j \vec g\cdot x \Big|
                                   \amalg_1^{j-1}\vec g\cdot  x \Big|
                                \cdots \Big| \amalg_1^\lambda \vec g \cdot
                                   x\Big\} \in C_{(j-\lambda+1)\lambda}.
  \end{align*}
  For any $\vec g = (g_1,\ldots,g_j)$ we have
  \begin{align*}\partial^v \Delta^\lambda(\vec g,y)
  % &= (g_2,\ldots,g_\lambda)\Big\{ g_1y \Big| \amalg_2^j \vec g \cdot x \Big|
  %   \cdots \Big| \amalg_2^\lambda \vec g \cdot x\Big\} \\
  % & \qquad + \sum_{\ell=1}^{j-1}(-1)^\ell (g_1,\ldots,g_{\ell+1}g_\ell,\ldots,g_\lambda)\Big\{ y \Big|
  % \amalg_1^j \vec g \cdot x \Big| 
  % \cdots \Big| \amalg_1^\lambda \vec g \cdot  x\Big\} \\
  % &\qquad + (-1)^\lambda
  % (g_1,\ldots,g_{\lambda-1})\Big\{ y \Big|\amalg_1^j \vec g \cdot x \Big|\cdots
  %   \Big| \amalg_1^\lambda \vec g \cdot
  % x\Big\} \\
  &= \Delta^{\lambda-1}(d_0\vec g, g_1y) + \sum_{\ell=1}^{\lambda-1}(-1)^\ell
  \Delta^{\lambda-1}(d_\ell\vec g, y)  + (-1)^\lambda (g_1,\ldots,g_{j-1})\Big\{ y \Big| \amalg_1^j \vec g
    \cdot x
  \Big| \cdots \Big| \amalg_1^\lambda \vec g \cdot  x\Big\}, \hbox{ and} \\
  %%%%%%%%%%%%%%%%%%%%%%%%
  \partial^h \Delta^{\lambda-1}(\vec g, y)
  % &= (g_1,\ldots,g_{\lambda-1})\Big\{ \amalg_1^j \vec g \cdot x
  % \Big| \cdots \Big| \amalg_1^{\lambda-1}\vec g \cdot  x\Big\}\\
  % & \qquad + \sum_{\ell=1}^{j-(\lambda-2)}
  %   (-1)^\ell(g_1,\ldots,g_{\lambda-1})\Big\{ y \Big| \amalg_1^j \vec g \cdot x
  %  \cdots \Big|
  % \widehat{\ell\mathrm{-th}} \Big| \cdots  \Big| \amalg_1^{\lambda-1}\vec g \cdot x\Big\} \\
  % & \qquad + (-1)^{j-(\lambda-2)+1} (g_1,\ldots,g_{\lambda-1}) \Big\{ y \Big|
  %   \amalg_1^j \vec g \cdot x
  %  \Big| \cdots \Big| \amalg_1^\lambda \vec g \cdot x\Big\} \\
  &= B^{\lambda-1}(\vec g) + (-1)^{j+1}\sum_{\ell=\lambda}^{j} (-1)^{\ell} \Delta^{\lambda-1}(d_{\ell}\vec g, y)
   + (-1)^{j+ \lambda} (g_1,\ldots,g_{\lambda-1}) \Big\{ y \Big|
    \amalg_1^j \vec g \cdot x
   \Big| \cdots \Big| \amalg_1^\lambda \vec g \cdot x\Big\}.
  \end{align*}

  Now consider $\sigma$, so that in the formulas above $j = d$.  Define
  \[\alpha^\lambda \defeq \sum_i m_i \bigg(\Delta^\lambda(d_0\vec g^{(i)}, g_1^{(i)}x) +
    \sum_{\ell=1}^{d} (-1)^\ell \Delta^\lambda(d_\ell\vec g^{(i)}, x)\bigg)
    \in C_{(d+1-\lambda)\lambda}.
  \]
  The above  calculations (which will have $j = d-1$) imply that
  \[\partial^v\alpha^\lambda = \partial^h \alpha^{\lambda-1},\]
  using the fact that (since $\sigma$ is a cycle)
  \[\sum_i m_i\sum_{\ell=0}^j (-1)^\ell B^{\lambda-1} (d_\ell \vec g^{(i)}) =
    0.\]
  Thus, in the total complex,
  \[\partial \sum_{\ell=1}^{d} (-1)^\ell \alpha^\lambda = \partial^v \alpha^1
    + (-1)^d\partial^h \alpha^{d}.\]
  Plugging in the definitions produces that $\partial^h \alpha^{d} = \sigma$ and
  \begin{align*}
    \partial^v \alpha^1
    &= \sum_i m_i (\det g_1^{(i)})\sum_{\ell=0}^{d} (-1)^\ell \Big\{ g_1^{(i)}x
      \Big| \amalg_2^{d} \vec g^{(i)}\cdot x \Big| \cdots
      \Big|\widehat{\amalg_2^{d+1-\ell}\vec g^{(i)}\cdot x}\Big| \cdots\Big| \amalg_2^2
      \vec g^{(i)}x \Big| x\Big\}  \\ & \qquad
     + \sum_i m_i \sum_{\ell=0}^{d} (-1)^\ell \Delta^0(d_\ell \vec g^{(i)},x).
  \end{align*}
  (Here we abuse notation and declare that $\amalg_2^{d+1} \vec g^{(x)} = g_1$.)
  As $\sigma$ is a cycle, the second sum is $0$.  To complete the proof observe
  that for any $d+1$-tuple of points $(y_0,\ldots,y_{d})$, the class
  $\sum_{\ell=0}^{d} \{ y_0 | \cdots | \widehat y_\ell | \cdots | y_{d}\}$
  is a horizontal cycle, and is therefore homologous to
  \[\sum_{\ell=0}^{d} \{ g\cdot y_0 | \cdots | \widehat {g\cdot y_\ell} | \cdots |
    g\cdot y_{d}\}\]
  for any $g\in I(X)$.  Thus each term in the sum (over $i$) for
  $\partial^v\alpha^1$ is a cycle.  Acting on the $i$-th term by $\prod_2^{d}
  \vec g^{(i)}$ gives the desired expression up to permuting the first element
  to the end.  As this requires $d$ swaps, it changes the sign by $(-1)^d$.
\end{proof}

We are now ready to prove the general case of Lemma~\ref{lem:edge-desc}:

\begin{lemma} \label{lem:edge-desc-k}
  Let $d = 2n$ or $2n-1$.
  In the spectral sequence of (\ref{diag:ss}) the map
  \[\epsilon_{n-1}: H_{d}(I(X);\Zinv^\tw) \to H_{n-1}\P_*(X)\]
  is induced by the map taking a chain $(g_1,\ldots,g_{d})$ to the sum
  \[\bigg(\prod_{i=1}^{d} \det(g_i)\bigg)\sum_{\sigma\in
      \Aut\{0,\ldots,d\}}  \sgn(\sigma)    \left[V_{\sigma,0} \subseteq V_{\sigma,1} \subsets V_{\sigma,d}\right].\]
  Here we define
  \[V_{\sigma,i} =
    \rspan(h_{\sigma(0)}x_0,h_{\sigma(1)}x_0,\ldots,h_{\sigma(i)}x_0),\]
  with $h_{d} = 1$, $h_i = g_{d}\cdots g_{d-i}$, and $x_0$ any fixed
  point in $X$.

  When $k=\R$ this is the class of the $d$-simplex with vertices
  \[\{h_0x_0,h_1x_0,\ldots,h_{d}x_0\}.\]
\end{lemma}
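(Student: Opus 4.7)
The plan is to follow a cycle representing an arbitrary class in $H_d(I(X);\Zinv^\tw)$ through the chain of identifications provided by Theorems~\ref{thm:reallycool} and~\ref{thm:gonchcomp}, and then through the edge map $\theta_{n-1}$ of the spectral sequence~(\ref{eq:gonchSS}), in order to read off an explicit cycle in $\P(X) = \DD(d)$ at the initial vertex of the Dehn cube. Recall that the base-row entry at this vertex is $H_{d+1}((S^\tw \sma F^X_\dotp)_{hI(X)})$, and by Theorem~\ref{thm:f<>p}, or more precisely via the comparison $\alpha_{\vec A}$ constructed in the proof of Theorem~\ref{thm:gonchcomp}, this is identified with $\P(X)$ by the antisymmetrization formula sending a simplex with vertex set $\{y_0,\ldots,y_d\}$ to $\sum_{\sigma\in \Sigma_{d+1}} \sgn(\sigma)[\rspan(y_{\sigma(0)}) \subsets \rspan(y_{\sigma(0)},\ldots,y_{\sigma(d)})]$. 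So once the cycle is pushed into $H_{d+1}((S^\tw \sma F^X_\dotp)_{hI(X)})$ in a presentation by face-sums of $d$-simplices, the desired formula emerges by symmetrization.

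The first step is to produce such an explicit representative in the bar double complex $C_{**}$ modelling $(S^\tw \sma F^X_\dotp)_{hI(X)}$. This is precisely what Lemma~\ref{lem:tech-comp} does: given a cycle $\sigma = \sum_i m_i \vec g^{(i)}$, it exhibits a homologous total cycle whose component of ``purely geometric'' bidegree is a signed alternating sum of $d$-simplices with vertices on the orbit $\{x_0, g_d x_0, g_d g_{d-1} x_0, \ldots, g_d\cdots g_1 x_0\}$, decorated by the global scalar $(-1)^{d+1}(\det \Pi_1^d \vec g^{(i)})$. The second step is to invoke Theorem~\ref{thm:reallycool} to write $H_{n+d}((\sY^X)_{hI(X)}) \cong H_d(I(X);\Zinv^\tw)$ and observe that under this isomorphism, combined with the spectral-sequence filtration, the edge homomorphism $\theta_{n-1}$ is nothing other than the restriction of the above cycle-representing procedure to the initial vertex of the Dehn cube (i.e.~to the spot $\vec A = (d)$ before any Dehn invariant has been applied). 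Applying $\alpha_{(d)}$, whose normalisation factor is $2^{1-|\vec A|} = 2^0 = 1$ at this vertex, converts the alternating sum of $d+1$ faces into the desired antisymmetrisation over $\Sigma_{d+1}$, and the $\det \Pi_1^d \vec g$ prefactor is rewritten as $\prod_{i=1}^d \det g_i$.

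The main obstacle is sign and twist bookkeeping. On one hand, the equivalence $(\sY^X)_{hI(X)} \simeq (S^\tw \sma S^{n-1})_{hI(X)}$ from Theorem~\ref{thm:reallycool} is assembled from iterated cofibers in the odd directions of $\hat\I_d$, which contributes a dimension shift of $\lfloor(d+1)/2\rfloor$; one has to verify that under the spectral sequence edge map this shift reduces, on the base row, simply to ``read off the class at the initial vertex of $\I_d$'' and does not introduce spurious signs. On the other hand, the $\tw$-action on $S^\tw$ enters Lemma~\ref{lem:tech-comp} through $d_0(m \vec g) = m(\det g_1)(g_2,\ldots,g_j)$ at every face-map application, and one must check that, after translating the resulting cycle via the partial products $h_i$ acting on $x_0$, exactly one factor of $\det g_i$ accumulates for each $i = 1,\ldots,d$, producing the global twist $\prod_{i=1}^d \det g_i$ asserted in the statement. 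The $k = \R$ specialisation is then immediate: after symmetrisation the signed sum of flags is the image of the single $d$-simplex with vertices $\{x_0, g_d x_0, \ldots, g_d\cdots g_1 x_0\}$ under the map $\P(X,1) \to H_{d+1}(S^\tw \sma F^X_\dotp)$ of Theorem~\ref{thm:f<>p}.
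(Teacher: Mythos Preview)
Your proposal is correct and follows essentially the same route as the paper's proof: both use Lemma~\ref{lem:tech-comp} as the computational core to move a bar-cycle for $H_d(I(X);\Zinv^\tw)$ across the double complex modelling $(S^\tw\smash F^X_\dotp)_{hI(X)}$ to the bottom row, both identify the edge map with reading off the class at the initial vertex $\vec A=(d)$, and both invoke Theorem~\ref{thm:f<>p} for the $k=\R$ statement. The paper makes one step a bit more explicit than you do: it identifies the edge homomorphism \emph{equivariantly, before taking coinvariants}, as the map induced by the inclusion of the $0$-skeleton $S^0\rcofib F^X_\dotp$ (suspended $d$ times and smashed with $S^\tw$), by comparing $\YY$ to the cube $\hat\YY$ with all non-initial vertices collapsed---this is exactly the verification you flag as ``one has to verify that this shift reduces to reading off the class at the initial vertex and does not introduce spurious signs,'' and it is what makes the double-complex computation land precisely where Lemma~\ref{lem:tech-comp} can be applied.
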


\begin{proof}
  The map $\epsilon_{n-1}$ is induced by the edge homomorphism $\tcofib
  \YY_{hI(X)} \to \tcofib \hat\YY_{hI(X)}$, where
  \[\hat\YY(\vec A) \defeq
    \begin{cases}
      \YY((d)) \caseif \vec A = (d) \\
      * \caseotherwise.
    \end{cases}\] (This is the quotient of the $n-1$-st filtration level by the
  $n-2$-nd in the spectral sequence for the total homotopy cofiber of a cube.)
  Both of these total homotopy cofibers can be computed simultaneously and
  $I(X)$-equivariantly via the methods above.  As all maps in $\hat \YY$ map to
  the basepoint, each direction will simply suspend the $(d)$-case.  This
  implies that $\epsilon_{n-1}$ is the map on $H_{2d+1}$ induced by taking
  $I(X)$-invariants of the map
  \[S^\tw \smash \Sigma^{d}S^0 \to^{S^\tw \smash \iota_0} S^\tw \smash
    \Sigma^{d} F^X_\dotp\]
  where $\iota_0$ is the inclusion of the $0$-skeleton into $F^X_\dotp$.

  This map can be described in an alternate manner.  Model homotopy coinvariants
  via taking an extra simplicial direction (see, for example, \cite[Example
  IV.1.1]{goerssjardine}) and take the double chain complex associated to a
  bisimplicial set.  Then the homology of the geometric realization is
  isomorphic to the homology of this total complex.  Due to the suspension
  coordinates, the bottom $d+1$ rows of this double complex are $0$.  Above this
  (assuming that the group-coordinate is vertical and the flag-coordinate is
  vertical) the map above includes the standard bar construction for
  $H_*(I(X);\Zinv^\tw)$ as the leftmost column.  In order to show that the given
  formula for $\epsilon_{n-1}$ holds it is therefore sufficient to show the
  following: given a cycle $\sigma$ in $C_{d}(I(X);\Zinv^\tw)$ (which lies in
  coordinate $(0,2d+1)$ in the double complex), it is homologous to the cycle
  given by the formula in the statement of the lemma (which lies in coordinate
  $(d,d+1)$).
  
  In the double complex associated to the above bisimplicial construction, the
  group at coordinate $(m, \ell+d+1)$ is generated by diagrams of the form
  \begin{diagram-numbered}{diag:1}
    { X_0 & X_1 & \cdots & X_m \\
      g_1X_0 & g_1X_1 & \cdots & g_1X_m \\
      \vdots & \vdots & & \vdots \\
      g_\ell X_0 & g_\ell X_1 & \cdots & g_\ell X_m \\};
    \cofib{1-1}{1-2} \cofib{1-2}{1-3} \cofib{1-3}{1-4}
    \cofib{2-1}{2-2} \cofib{2-2}{2-3} \cofib{2-3}{2-4}
    \cofib{4-1}{4-2} \cofib{4-2}{4-3} \cofib{4-3}{4-4}
    \to{1-1}{2-1} \to{1-2}{2-2} \to{1-4}{2-4}
    \to{2-1}{3-1} \to{2-2}{3-2} \to{2-4}{3-4}
    \to{3-1}{4-1} \to{3-2}{4-2} \to{3-4}{4-4}
  \end{diagram-numbered}
  We denote such a diagram by
  \[(g_1,\ldots,g_\ell)[X_0 \subsets X_m].\] We define
  \[\{x_0|\cdots|x_i\} = \sum_{\sigma\in \Aut\{0,\ldots,i\}} \sgn(\sigma)
    [\rspan(x_{\sigma(0)}) \subseteq \rspan(x_{\sigma(0)},x_{\sigma(1)}) \subsets
    \rspan(x_{\sigma(0)},\ldots,x_{\sigma(i)}) \subseteq X].\] This double complex
  contains as a (vertically shifted by $d+1$) subcomplex the complex $C_{**}$ of
  Lemma~\ref{lem:tech-comp}; the conclusion of the lemma is exactly the desired
  formula.
  
  The last claim in the lemma follows by Theorem~\ref{thm:f<>p}.  
\end{proof}

\appendix

\section{Comparing RT-buildings to the classical
  constructions} \label{app:classical}

In this appendix we prove our claims in Theorem~\ref{thm:f<>p} and
Theorem~\ref{thm:RDehneq} that the construction of the scissors congruence
groups in our account agrees with the classical constructions.  To begin we
introduce an object closely related to the configuration space of points in $X$.
The simplices in this space are tuples of points in $X$; unlike in the
configuration space, points are allowed to be repeated, and this can produce
nondegenerate simplices.  For example, for any two distinct points $a,b\in X$,
$(a,b,a)$ is a nondegenerate $2$-simplex in $\Tpl^m_\dotp(X)$.

\begin{definition}
  $\Tpl^m_\dotp(X)$ is the simplicial set whose $i$-simplices are given by the
  subset of $\prod_{j=0}^i X$ of those tuples $(x_0,\ldots,x_i)$ such that any
  subset of the tuple has a nondegenerate span of dimension at most $m$.  The
  $j$-th face map is given by dropping the $j$-th element of the tuple; the
  $j$-th degeneracy is given by repeating the $j$-th element of the tuple.
\end{definition}

The homology of $\Tpl^m_\dotp(X)$ is directly related to scissors congruence
groups, as the following results illustrate:

\begin{theorem}{\cite[Theorem 2.10]{dupont}} \label{thm:toSC} Let $k = \R$.  The
  map taking a tuple of points to its convex hull defines a
  $I(\cH^n)$-equivariant isomorphism
  \[H_n(\Tpl^n_\dotp(\cH^n)/\Tpl^{n-1}_\dotp(\cH^n))^\tw \rto \P(\cH^n,1).\] Here,
  $\cdot^\tw$ means that the action is \emph{twisted} by the determinant: for any
  $g\in I(\cH^n)$, $g$ acts on a homology on the left by $(-1)^{\det g}$ as well
  as by the usual action on $\cH^n$.
  % In the case when $X = S^n$, this statement is true when restricted to the
  % subspace of $\Tpl^n_\dotp(X)$ of those tuples which are contained in any open
  % hemisphere of $S^n$.  
\end{theorem}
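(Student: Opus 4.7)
The plan is to define $\phi$ at the chain level and verify it descends to a bijection on homology. On non-degenerate $n$-simplices $(x_0, \ldots, x_n)$ of $\Tpl^n_\dotp(\cH^n)/\Tpl^{n-1}_\dotp(\cH^n)$---tuples whose span is all of $\cH^n$---set $\phi(x_0, \ldots, x_n) = [\mathrm{conv}(x_0, \ldots, x_n)] \in \P(\cH^n, 1)$, and extend linearly. Degenerate simplices (with repeated coordinates) have convex hulls of dimension less than $n$, hence vanish in $\P(\cH^n, 1)$, so $\phi$ is well-defined on the chain complex.

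The main step is showing $\phi \circ \partial = 0$ in $\P(\cH^n, 1)$. Consider a non-degenerate $(n+1)$-simplex $(x_0, \ldots, x_{n+1})$; its boundary is $\sum_i (-1)^i (x_0, \ldots, \widehat{x_i}, \ldots, x_{n+1})$, which under $\phi$ becomes an alternating sum of convex hulls. The $n+2$ points $x_i$ lie in $\cH^n$ and are therefore affinely dependent, so by Radon's theorem they partition into two groups whose convex hulls intersect. Consequently either one $x_i$ lies in the convex hull of the remaining points (yielding a ``star triangulation'' cutting relation) or the two Radon subsets give two competing triangulations of the common convex hull. In each case the alternating sum is a cutting relation and vanishes in $\P(\cH^n, 1)$. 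This is the classical Dupont--Sah computation.

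For bijectivity, surjectivity is immediate since $\P(\cH^n, 1)$ is generated by $n$-simplices, each of which is the convex hull of its vertices. Injectivity is more delicate: one constructs a quasi-inverse by associating to each $n$-simplex $P$ an ordered tuple of its vertices in some coherent way (after choosing an orientation on $\cH^n$) and then verifies that the cutting relations defining $\P(\cH^n, 1)$ lift to boundaries in $\Tpl^n_\dotp/\Tpl^{n-1}_\dotp$. The twist $\cdot^\tw$ enters precisely here: an isometry $g$ with $\det g = -1$ acts on the chosen orderings by an odd permutation, introducing a sign that matches the twist by $\det$ on the left-hand side while leaving the underlying (unoriented) convex hull unchanged on the right.

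The main obstacle will be executing the injectivity argument cleanly: one must show that \emph{every} cutting relation in $\P(\cH^n, 1)$ arises from an $(n+1)$-chain boundary, integrally, without inverting factorials arising from any antisymmetrization. The standard way to handle this is via an acyclic-carriers argument on the subcomplex of ``small'' tuples (those contained in a given subdivision) together with an explicit combinatorial identification of the difference of two triangulations of a convex body as a telescoping sum of bipyramid-type boundaries $\partial(x_0, \ldots, x_{n+1})$; verifying these relations suffice to generate all cuts is the principal technical work.
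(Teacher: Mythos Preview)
The paper does not prove this statement; it is quoted as \cite[Theorem 2.10]{dupont} and used as a black box in Appendix~\ref{app:classical}. There is no proof in the paper to compare your proposal against.

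That said, your outline is broadly the standard approach, but there is a genuine gap in the setup. The map $\phi$ as you define it is not well-defined on $H_n$: transposing two coordinates of a tuple negates its class in $H_n(\Tpl^n_\dotp/\Tpl^{n-1}_\dotp)$ (compute the boundary of $(x_0,x_1,x_0,x_2,\ldots,x_n)$ in the quotient and note that all faces except the first and third either are degenerate or lie in $\Tpl^{n-1}_\dotp$), yet it leaves the unoriented convex hull unchanged. Your $\phi$ would therefore force $2[P]=0$ for every simplex $P$ in $\P(\cH^n,1)$. The fix is to insert the sign comparing the orientation of the ordered tuple $(x_0,\ldots,x_n)$ to a fixed orientation of $\cH^n$. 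This sign is also the correct source of the twist $(\cdot)^\tw$: an orientation-reversing isometry flips this sign while acting geometrically on the convex hull. Your explanation that such an isometry ``acts on the chosen orderings by an odd permutation'' is not the right mechanism, since isometries move points rather than permute the indices of a tuple.

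Once the orientation sign is in place, your Radon-type argument for $\phi\circ\partial=0$ is correct, with the bookkeeping that faces spanning a proper subspace already vanish in the quotient. For injectivity, your acyclic-carriers suggestion could be made to work but is heavier than necessary. The standard route is to build the inverse directly: send a geometric simplex to the class of any ordering of its vertices weighted by the orientation sign, and verify that the cutting relations in $\P(\cH^n,1)$ map to boundaries by reducing an arbitrary subdivision to a chain of stellar subdivisions, each of which is visibly $\phi$ applied to a single $\partial(x_0,\ldots,x_{n+1})$.
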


The spherical case is more complicated.  Recall the map $\Sigma$, defined as the
suspension of a polytope, from Definition~\ref{def:spherangle}.

\begin{theorem}[{\cite[Corollary 5.18]{dupont_polytope}}] \label{thm:tospherSC}
  The map taking a simplex to its convex hull induces a $O(n+1)$-equivariant
  isomorphism
  \[H_n(\Tpl^n_\dotp(S^n)/\Tpl^{n-1}_\dotp(S^n))^\tw \rto
    (\coker \Sigma).\] In particular, since $\Sigma$ is
  $O(n+1)$-equivariant, $\Sigma$ induces an isomorphism on coinvariants 
  \[H_0(O(n+1), H_n(\Tpl^n_\dotp(S^n)/\Tpl^{n-1}_\dotp(S^n)^\tw) \rto
    \P(S^n,O(n+1)).\]
\end{theorem}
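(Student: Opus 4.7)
The approach is to adapt Dupont's proof of the hyperbolic analog (Theorem~\ref{thm:toSC}) while bookkeeping the additional sign ambiguity. The new subtlety is that a tuple $(a_0,\ldots,a_n)$ of points in $S^n$---which in our convention are lines in $\R^{n+1}$---does not determine a unique convex hull: there are $2^{n+1}$ possible hulls $[b_0,\ldots,b_n]$ depending on the sign of each representative $b_i \in a_i$. The cokernel of $\Sigma$ is designed precisely to absorb this ambiguity, and the twist $\cdot^\tw$ is designed precisely to absorb the accompanying global sign flip.

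The plan is to construct a chain-level map $\psi\colon C_n(\Tpl^n_\dotp(S^n))^\tw \to \coker \Sigma$ by picking any representatives $b_i$ and sending $(a_0,\ldots,a_n)$ to the class of $[b_0,\ldots,b_n]$. The key computation is that flipping a single sign $b_i \leadsto -b_i$ leaves the tuple of lines unchanged while modifying the convex hull by
\[[b_0,\ldots,b_i,\ldots,b_n] \;+\; [b_0,\ldots,-b_i,\ldots,b_n] \;=\; \Sigma(\sigma_i),\]
where $\sigma_i$ is the $(n-1)$-simplex in the equator $\rspan(b_0,\ldots,\widehat{b_i},\ldots,b_n) \cap S^n$. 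Thus in $\coker \Sigma$ flipping one sign negates the class, so applying the antipodal map $-I$ (which flips all $n+1$ signs) multiplies the class by $(-1)^{n+1} = \det(-I)$---matching what the twist imposes on the source. Pinning down the convention by requiring the ordered basis $(b_0,\ldots,b_n)$ to be positively oriented in $\R^{n+1}$, a condition well-defined modulo the global sign flip absorbed by the twist, makes $\psi$ well-defined.

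Next I would verify that $\psi$ is compatible with face maps, kills degenerate tuples (those whose $b_i$ fail to span a subspace of the expected dimension, so that the ``hull'' has positive codimension and is zero in $\hat\P$), and is $O(n+1)$-equivariant; these are routine once the sign convention is fixed. Dupont's hyperbolic argument then adapts directly to prove surjectivity, since every polytope in $\hat\P(S^n,1)$ is a signed sum of nondegenerate $n$-simplices. For injectivity I would exhibit an inverse $\phi$ sending $[b_0,\ldots,b_n]$ to the homology class of the tuple of spans $(a_0,\ldots,a_n)$, and check that $\phi$ annihilates $\im \Sigma$ using the suspension identity above. Finally, the coinvariant statement is formal: applying $H_0(O(n+1),-)$ to the isomorphism and invoking Definition~\ref{def:spherangle} identifies the right-hand side with $\P(S^n, O(n+1))$.

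The main obstacle is the bookkeeping in the construction of $\psi$---organizing the sign conventions so that $\psi$ is simultaneously well-defined on chains, compatible with face maps, and equivariant with the \emph{twisted} $O(n+1)$-action. The insight making this tractable is that the twist $\tw$ was engineered to absorb exactly the global sign ambiguity inherent in choosing spherical representatives; once this bookkeeping is correctly set up, the rest of the argument is a direct translation of the hyperbolic proof.
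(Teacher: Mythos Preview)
The paper does not prove this theorem: it is quoted verbatim as \cite[Corollary 5.18]{dupont_polytope} and used as a black box in the proof of Theorem~\ref{thm:f<>p}. There is therefore no ``paper's own proof'' to compare your proposal against.

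That said, your sketch is a reasonable reconstruction of how such a proof would go, and it correctly identifies the two structural points that distinguish the spherical case from the hyperbolic one: the $2^{n+1}$-fold ambiguity in choosing representatives $b_i$ of the lines $a_i$, and the lune identity $[b_0,\ldots,b_i,\ldots,b_n] + [b_0,\ldots,-b_i,\ldots,b_n] = \Sigma(\sigma_i)$ that makes a single sign flip act by $-1$ in $\coker\Sigma$. Your orientation convention does resolve the ambiguity: any two positively oriented lifts differ by an even number of sign flips and hence agree in $\coker\Sigma$, and the computation that an orientation-reversing $g$ forces one extra flip is exactly what makes the map equivariant for the \emph{twisted} action on the source. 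One caution: your parenthetical ``a condition well-defined modulo the global sign flip absorbed by the twist'' is imprecise as stated, since when $n+1$ is even the antipodal map preserves orientation and there is no residual global ambiguity to absorb; the twist is needed not to rescue well-definedness of $\psi$ but to make it equivariant. If you intend to actually write this out, you should consult Dupont's original argument rather than reinvent it.
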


It follows that in order to relate scissors congruence groups and the homology
of $S^\tw\smash F_\dotp^X$ it suffices to show that $\Tpl_\dotp^{\dim
  X}(X)/\Tpl_\dotp^{\dim X-1}(X)$ and $F_\dotp^X$ are $I(X)$-equivariantly
homotopy equivalent. 

We begin with a basic lemma about the homotopy type of $\Tpl_\dotp(X)$ in the
absence of dimension restrictions:
\begin{lemma} \label{lem:Tpl*}
  \[\Tpl_\dotp^{\dim X}(X) \simeq *.\]
\end{lemma}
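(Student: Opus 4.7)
The plan is to exhibit an extra degeneracy on $\Tpl^{\dim X}_\dotp(X)$ and invoke the standard extra-degeneracy lemma (if an augmented simplicial set $Y_\bullet \to Y_{-1}$ admits maps $s_{-1}: Y_n \to Y_{n+1}$ satisfying $d_0 s_{-1} = \mathrm{id}$, $d_{i+1}s_{-1} = s_{-1}d_i$, $s_{i+1}s_{-1} = s_{-1}s_i$, then $|Y| \simeq |Y_{-1}|$). Concretely I would fix any point $p \in X$ and, for every $n\geq 0$, propose
\[s_{-1}(x_0,\ldots,x_n) \defeq (p,x_0,\ldots,x_n),\]
with augmentation $Y_0 \to *$ and $s_{-1}(*) = (p) \in \Tpl^{\dim X}_0(X)$. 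Because the face and degeneracy maps of $\Tpl^{\dim X}_\dotp(X)$ are given by deleting and duplicating entries of a tuple, the required simplicial identities are immediate, so the whole proof reduces to one verification.

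That verification is that $s_{-1}$ actually lands in $\Tpl^{\dim X}_{n+1}(X)$, i.e.\ that whenever every subtuple of $(x_0,\ldots,x_n)$ has a valid span (nondegenerate, with $n_-$ matching that of $E$), the same is true of every subtuple of $(p,x_0,\ldots,x_n)$. A subtuple of the extended tuple is either a subtuple of the original (nothing to check) or has the form $(p, x_{i_1},\ldots,x_{i_k})$. Writing $V = \langle x_{i_1},\ldots,x_{i_k}\rangle$ and $V' = V + \langle p\rangle$, I reduce everything to the claim: for any valid subspace $V\subseteq E$ and any $p \in X$, the subspace $V'$ is again valid.

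For the spherical case this claim is vacuous: the form is positive-definite on $E$, so every nonzero subspace of $E$ is automatically nondegenerate with $n_- = 0$. The one nontrivial case is hyperbolic. If $p \in V$ then $V' = V$ and there is nothing to prove; otherwise I would decompose $p = p_V + p_\perp$ with $p_V \in V$ and $p_\perp \in V^\perp \setminus \{0\}$. Since $V$ already absorbs the single negative direction of $E$, the complement $V^\perp$ has signature $(0,\dim E - \dim V)$, hence is positive-definite, hence $q(p_\perp,p_\perp) > 0$. In the basis $\{v_1,\ldots,v_{\dim V},p_\perp\}$ of $V'$ the form is block-diagonal with signature $(1,\dim V - 1) \oplus (0,1) = (1,\dim V)$, so $V'$ is a valid hyperbolic subspace.

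The main obstacle I anticipate is precisely this last signature computation in the hyperbolic case, where a priori adding a new point could jeopardize the $(1,\ast)$ signature. The key observation that rescues it is that the negative direction of $E$ is used up by $V$, making $V^\perp$ positive-definite so that any orthogonal contribution of $p$ is automatically positive. Once this is in hand the extra-degeneracy lemma produces the contraction, and the lemma follows uniformly for both neat geometries without any filtered-colimit or generic-position argument.
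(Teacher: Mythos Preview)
Your extra-degeneracy argument is clean and, over $\R$, entirely correct; it is also more direct than the paper's route, which first cites Cathelineau for acyclicity and then separately kills $\pi_1$ by a Zariski-density argument that finds a generic cone point for each loop. Over $\R$ the decisive step---that $V^\perp$ is positive-definite once $V$ already carries the unique negative direction in the hyperbolic case---is exactly what lets a \emph{single} fixed $p$ serve as a cone point for every simplex simultaneously.

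The gap is that the lemma is stated (and used, via Corollary~\ref{cor:concentrated} and hence Proposition~\ref{prop:singleH}) for arbitrary characteristic-$0$ fields $k$, and your argument fails once $k$ contains $\sqrt{-1}$. There the signature condition is vacuous and the only requirement on a subspace is nondegeneracy of the restricted form; but for a fixed point $p$ and a valid $V$, the projection $p_\perp \in V^\perp$ can be isotropic, making $V+\langle p\rangle$ degenerate. Concretely, in $S^2_\CC=(\CC^3,\,x^2+y^2+z^2)$ take $p=(1,1,i)$ (a genuine point, $q(p)=1$) and apply your $s_{-1}$ to the $0$-simplex $((1,0,0))$: the span $\langle(1,0,0),(0,1,i)\rangle$ has Gram matrix with rows $(1,0)$ and $(0,0)$, hence is degenerate, so $((1,1,i),(1,0,0))$ is not a simplex of $\Tpl^2_\dotp(S^2_\CC)$ and $s_{-1}$ is not defined. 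For any choice of $p$ and any $\dim X\geq 2$ over $\CC$ one can produce such a bad $V$ (take $V=\langle p+w\rangle$ with $w\neq 0$ isotropic in $p^\perp$). This is exactly why the paper does not attempt a global cone point: no single $p$ works, but for any \emph{finite} collection of simplices a suitable $w$ exists, which is enough to kill homology and $\pi_1$ separately.
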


\begin{proof}
  By \cite[Proposition 2.2.1]{cathelineau_proj}, since $k$ is infinite
  $\tilde H_*(\Tpl_\dotp^{\dim X}(X)) = 0$.  (In fact, Cathelineau proves this
  only with rational coefficients, but his proof works equally well integrally.)
  To see that $\Tpl_\dotp^{\dim X}(X)$ is contractible it suffices to check that
  it is simply-connected.  By \cite[Proposition 2.2.2]{cathelineau_proj} for any
  pair of points $(x,y)$ spanning a subspace of $X$, the subset $W_{x,y}$ of
  those points in $X$ such that $(x,y,w)$ spans a subspace is a Zariski-open
  subspace of $X$.  Suppose that we are given a loop represented by the sequence
  of $1$-simplices $(x_0,x_1),(x_1,x_2),\ldots,(x_i,x_0)$.  Then, since $k$ is
  infinite, there exists a point $w$ such that $(x_j,w)$ spans a subspace for
  all $j$, and the loop is homotopic to a loop of the form $(x_0,w),(w,x_0)$.
  This is contracted by the $2$-simplex $(x_0,w,x_0)$, so
  $\Tpl_\dotp^{\dim X}(X)$ is contractible.
\end{proof}

For any simplicial set $K_\dotp$, let $\Sd K_\dotp$ be the barycentric
subdivision of $K_\dotp$ \cite[Section III.4]{goerssjardine}.  Define the map
$h: \Sd \Tpl^m_\dotp(X) \rto T^m_\dotp(X)$ to be the map induced by taking a
tuple of points in $X$ to their span.  More explicitly, an $i$-simplex in
$\Sd \Tpl^m_\dotp(X)$ is a sequence
$\vec x_0 \subseteq \vec x_1 \subsets \vec x_i$, where $\vec x_j$ is a tuple in
$X$ and $\vec x_{j-1}$ is an (ordered) subset of $\vec x_j$ for all $j$.  Taking
the spans of each tuple produces an $i$-simplex in $T^m_\dotp(X)$; as taking
spans is $G$-equivariant, this map is $G$-equivariant.

\begin{proposition} \label{prop:tup->t}
  The map
  \[h:\Sd \Conf^m_\dotp(X) \rto T^{m}_\dotp(X)\] induced by taking tuples in $X$
  to their spans is a $G$-equivariant weak equivalence.
\end{proposition}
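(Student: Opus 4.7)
The plan is to apply Quillen's Theorem A after realizing both sides as nerves of posets. Observe that $T^m_\dotp(X)$, away from the basepoint, is the nerve of the poset $\mathcal{F}^m$ of nonempty subspaces of $X$ of dimension at most $m$ ordered by inclusion, and that $\Sd \Tpl^m_\dotp(X)$, away from the basepoint, is the nerve of the poset $\mathcal{T}^m$ whose objects are nondegenerate simplices of $\Tpl^m_\dotp(X)$---equivalently, tuples $(x_0,\ldots,x_j)$ of points in $X$ with no two consecutive entries equal such that every subtuple has nondegenerate span of dimension at most $m$---ordered by the face relation $\vec x \leq \vec y$ when $\vec x$ arises from $\vec y$ by deleting coordinates. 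The map $h$ is then induced by the $I(X)$-equivariant map of posets $\mathrm{span}\colon \mathcal{T}^m \to \mathcal{F}^m$.

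To apply Theorem A I would fix $U \in \mathcal{F}^m$ and analyze the overcategory $\mathrm{span}/U = \{\vec x \in \mathcal{T}^m : \mathrm{span}(\vec x) \subseteq U\}$. A tuple has span contained in $U$ if and only if every entry lies in $U$, and in that case every subtuple automatically has nondegenerate span of dimension at most $\dim U \leq m$, so this overcategory is precisely the poset of nondegenerate simplices of $\Tpl^{\dim U}_\dotp(U)$. Its nerve is $\Sd \Tpl^{\dim U}_\dotp(U)$, which is weakly equivalent to $\Tpl^{\dim U}_\dotp(U)$, and the latter is contractible by Lemma~\ref{lem:Tpl*}. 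Quillen's Theorem A then delivers that $h$ is a weak equivalence, and $G$-equivariance is built in because span commutes with the action of $I(X)$ on both tuples and subspaces.

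The main place where care is needed is the identification $\Sd \Tpl^m_\dotp(X) \simeq N\mathcal{T}^m$: since $\Tpl^m_\dotp(X)$ has nondegenerate simplices with repeated vertices (for instance $(a,b,a)$), it is not a simplicial complex, so one must verify that the poset-nerve model of the subdivision agrees up to weak equivalence with the standard definition of $\Sd$ (as a colimit over subdivided representables). Once this identification is in hand the argument reduces cleanly to Lemma~\ref{lem:Tpl*} as its sole geometric input; the contractibility of the overcategories is then a tautological consequence of that lemma applied to each subspace $U$ in turn.
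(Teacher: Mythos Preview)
Your approach is essentially the same as the paper's: both arguments reduce to showing that the ``fiber'' over a subspace $U$ is $\Sd\Tpl^{\dim U}_\dotp(U)$ and then invoke Lemma~\ref{lem:Tpl*}. The only real difference is the wrapper. The paper applies the simplicial-set version of Theorem~A (Gillet--Grayson's Theorem~A$'$), which says a map of simplicial sets is a weak equivalence when each \emph{naive left homotopy fiber} $(h|y)$ is contractible; for $y=(U_0\subseteq\cdots\subseteq U_q)$ this fiber is identified directly with $\Sd\Tpl^{\dim U_0}_\dotp(U_0)$, and no poset model of $\Sd$ is needed. You instead use the poset form of Quillen's Theorem~A, which forces you to identify $\Sd\Tpl^m_\dotp(X)$ with the nerve of the face poset of nondegenerate simplices. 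You rightly flag this step as delicate, and it is: for a general simplicial set the nerve of the nondegenerate-simplex poset need not model $\Sd$ (e.g.\ for $\Delta^1/\partial\Delta^1$ it is contractible while $\Sd$ is a circle), and $\Tpl^m_\dotp(X)$ is not a simplicial complex. The paper's choice of Theorem~A$'$ sidesteps exactly this point, so it buys a cleaner argument with the same geometric content.
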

\begin{proof}
  We use Theorem A' \cite[p.578]{gillet_grayson}, which states that a map of
  simplicial sets is a weak equivalence if the ``naive left homotopy fiber'' above
  every simplex in the codomain is contractible.  Here, for a given a
  $q$-simplex $y\in T^{m}_{q} (X)$ represented by
  $(U_0 \subseteq \cdots \subseteq U_q)$, the naive left homotopy fiber is the
  simplicial set
  \[(h|y)_p = \Big\{(\vec x_0 \subsets \vec x_p) \in \Sd \Conf^m_{p}
    (X)\,\Big|\,  \hbox{all entries of $\vec x_p$ are in $U_0$} \Big\}.
  \]
  (For a precise definition of the naive homotopy fiber see for example
  \cite[Defn.~3.1]{grayson_additivity}.)  In this case, $(h|y)_p$ is isomorphic
  to the simplicial set $\Sd \Conf^m_{\dotp} (U_0)$; as this is isomorphic to
  $\Sd \Conf^{\dim U_0}_\dotp(U_0)$ it is contractible by Lemma~\ref{lem:Tpl*}.
  %% DOES NOT WORK OVER AN ARBITRARY BASE FIELD
  % To see that
  % this is contractible, note that
  % $\Sd \Conf^{\dim U_0}_\dotp(U_0) \simeq \Conf^{\dim U_0}_\dotp(U_0)$, so it
  % suffices to focus on this.  Let $u_0 \in U_0$ be any point.  We can define an
  % extra degeneracy by defining $s_{-1}(x_0,\ldots,x_i) = (u_0,x_0,\ldots,x_i)$.
  % Any connected simplicial set with an extra degeneracy is contractible
  % \cite[Lemma III.5.1]{goerssjardine}.
\end{proof}

The $m = \dim X$ case of the following theorem shows that $F^X_\dotp$ is
$G$-equivariantly homotopy equivalent to a quotient of tuple spaces.

\begin{theorem} \label{thm:tpl<>flag} For all $m \geq 0$ 
  \[\Tpl_\dotp^m(X)/\Tpl_\dotp^{m-1}(X) \simeq
    T^m_\dotp(X)/T^{m-1}_\dotp(X)\]
  via a zigzag of $G$-equivariant maps. 
\end{theorem}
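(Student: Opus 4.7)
The plan is to apply Proposition~\ref{prop:tup->t} simultaneously at dimensions $m$ and $m-1$, and then pass to cofibers. First, the map $h:\Sd \Tpl^p_\dotp(X) \to T^p_\dotp(X)$ is visibly natural in $p$: a tuple all of whose finite subsets have span of dimension at most $p-1$ certainly has all finite subsets with span of dimension at most $p$, and the span construction is unaffected by this change. Consequently, there is a commutative square of pointed $G$-simplicial sets
\[
\begin{array}{ccc}
\Sd \Tpl^{m-1}_\dotp(X) & \xrightarrow{\ h\ } & T^{m-1}_\dotp(X) \\
\downarrow & & \downarrow \\
\Sd \Tpl^m_\dotp(X) & \xrightarrow{\ h\ } & T^m_\dotp(X)
\end{array}
\]
in which the vertical arrows are inclusions of simplicial subsets (hence cofibrations of pointed simplicial sets) and both horizontal arrows are $G$-equivariant weak equivalences by Proposition~\ref{prop:tup->t}.

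By the gluing lemma for cofibrations in pointed simplicial sets, the induced map of vertical cofibers
\[\bar h:\ \Sd \Tpl^m_\dotp(X)/\Sd \Tpl^{m-1}_\dotp(X) \longrightarrow T^m_\dotp(X)/T^{m-1}_\dotp(X)\]
is again a $G$-equivariant weak equivalence. Since $\Sd$ is a left adjoint it preserves colimits, and in particular commutes with pointed quotients; this yields a natural $G$-equivariant isomorphism $\Sd \Tpl^m_\dotp(X)/\Sd \Tpl^{m-1}_\dotp(X) \cong \Sd\bigl(\Tpl^m_\dotp(X)/\Tpl^{m-1}_\dotp(X)\bigr)$.

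Finally, the last-vertex map $\Sd K \to K$ is a natural (hence $G$-equivariant) weak equivalence for every simplicial set $K$. Applying this to $K = \Tpl^m_\dotp(X)/\Tpl^{m-1}_\dotp(X)$ and splicing it together with $\bar h$ produces the desired $G$-equivariant zigzag
\[T^m_\dotp(X)/T^{m-1}_\dotp(X) \xleftarrow{\ \bar h\ } \Sd\bigl(\Tpl^m_\dotp(X)/\Tpl^{m-1}_\dotp(X)\bigr) \longrightarrow \Tpl^m_\dotp(X)/\Tpl^{m-1}_\dotp(X).\]
The serious input is Proposition~\ref{prop:tup->t}; everything else is formal. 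The only point requiring even minor care is checking the gluing lemma in a pointed, $G$-equivariant setting, but this is standard since both cofibrations and weak equivalences of pointed $G$-simplicial sets are detected on underlying simplicial sets.
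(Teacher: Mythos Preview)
Your proof is correct and follows essentially the same route as the paper. The paper organizes the argument as a single three-column $G$-equivariant diagram
\[
\Tpl_\dotp^{m-1}(X) \xleftarrow{\ \sim\ } \Sd\Tpl_\dotp^{m-1}(X) \xrightarrow{\ h\ } T_\dotp^{m-1}(X)
\]
mapping via cofibrations into the analogous row at level $m$, and then takes vertical cofibers all at once; you instead handle the $h$-square and the last-vertex square separately and bridge them with the observation that $\Sd$ preserves quotients. These amount to the same argument, since naturality of the last-vertex map makes your identification $\Sd(\Tpl^m/\Tpl^{m-1}) \cong \Sd\Tpl^m/\Sd\Tpl^{m-1}$ compatible with the map the paper obtains on cofibers.
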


%\begin{proof}
%Consider the following diagram:
%\begin{diagram}
%  { \Tpl_\dotp^{n-1}(X) & {}|A^{(n-1)}_{\dotp\dotp}| & \Flag^{n-1}_\dotp(X) \\
%    \Tpl_\dotp^n(X) & {}|A^{(n)}_{\dotp\dotp}| & \Flag^n_\dotp(X) \\};
%  \we{1-2}{1-1} \we{1-2}{1-3}
%  \we{2-2}{2-1} \we{2-2}{2-3}
%  \cofib{1-1}{2-1} \cofib{1-2}{2-2} \cofib{1-3}{2-3}
%\end{diagram}
%Note that all maps are $G$-equivariant by definition.  Taking vertical cofibers
%we get the desired result.
%\end{proof}

\begin{proof}
We have the $G$-equivariant
commutative diagram
\begin{diagram}
  { \Conf_\dotp^{m-1}(X) & \Sd \Conf_\dotp^{m-1}(X) & T_\dotp^{m-1}(X) \\
    \Conf_\dotp^m(X) & \Sd \Conf_\dotp^m(X) & T_\dotp^m(X) \\};
  \we{1-2}{1-3}_h \we{2-2}{2-3}_h
  \we{1-2}{1-1} \we{2-2}{2-1}
  \cofib{1-2}{2-2} \cofib{1-3}{2-3} \cofib{1-1}{2-1}
\end{diagram}
where the vertical maps are injective on all $i$-simplices, hence cofibrations.
Taking vertical cofibers gives the desired result, as the cofibers of the
vertical maps are also the homotopy cofibers.
\end{proof}

\begin{corollary} \label{cor:concentrated}
  \[\tilde H_i(F_\dotp^X) = 0 \qquad \hbox{for }i \neq \dim X.\]
\end{corollary}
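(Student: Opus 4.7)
The plan is to string together the machinery built in the appendix so far with the classical Solomon--Tits theorem. Writing $n = \dim X$, Theorem~\ref{thm:tpl<>flag} gives a $G$-equivariant weak equivalence
\[F_\dotp^X = T_\dotp^n(X)/T_\dotp^{n-1}(X) \simeq \Tpl_\dotp^n(X)/\Tpl_\dotp^{n-1}(X),\]
so it suffices to compute the reduced homology of the right-hand side. This is the key step allowing us to leave the world of flags and move into the world of tuples, where we have the strong contractibility statement of Lemma~\ref{lem:Tpl*}.

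Next, I would apply the long exact sequence in reduced homology to the cofiber sequence
\[\Tpl_\dotp^{n-1}(X) \rto \Tpl_\dotp^n(X) \rto \Tpl_\dotp^n(X)/\Tpl_\dotp^{n-1}(X).\]
By Lemma~\ref{lem:Tpl*}, the middle term is contractible, so the long exact sequence collapses into connecting isomorphisms
\[\tilde H_i\bigl(\Tpl_\dotp^n(X)/\Tpl_\dotp^{n-1}(X)\bigr) \cong \tilde H_{i-1}\bigl(\Tpl_\dotp^{n-1}(X)\bigr)\]
for every $i$. The claim is therefore reduced to proving $\tilde H_{j}(\Tpl_\dotp^{n-1}(X)) = 0$ for $j \neq n-1$.

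For the last step I would apply Proposition~\ref{prop:tup->t} with $m = n-1$ to pass from $\Tpl_\dotp^{n-1}(X)$ to the flag space $T_\dotp^{n-1}(X)$, which parametrizes proper nontrivial flags of subspaces of $X$ and is a truncated version of the building of the ambient quadratic space $(E,q)$. The Solomon--Tits theorem \cite[Section~2]{quillen_rank} identifies $T_\dotp^{n-1}(X)$ (up to weak equivalence) with a wedge of $(n-1)$-spheres, and hence $\tilde H_{j}(T_\dotp^{n-1}(X)) = 0$ for $j \neq n-1$. Combining these gives $\tilde H_i(F_\dotp^X) = 0$ for $i \neq n$, as desired.

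The main obstacle is the final Solomon--Tits invocation: the subspaces allowed by Definition~\ref{def:neat} must preserve the negative signature $n_-$, so in the hyperbolic case $T_\dotp^{n-1}(X)$ is not literally the Tits building of $E$ but only the subcomplex of flags whose members contain a negative direction. In the spherical case, $n_- = 0$ and the condition is vacuous, so Solomon--Tits applies directly. In the hyperbolic case one reduces to the spherical case: fixing the timelike direction, the remaining subspace lattice is a Tits-building of a positive-definite subspace, and a standard poset-fibration (or Quillen-fiber) argument shows the inclusion into the full flag complex does not change homotopy type in the relevant range.
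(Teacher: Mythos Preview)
Your approach is different from the paper's and takes a longer route; the final Solomon--Tits step has a genuine gap.

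The paper's argument is much shorter and avoids Solomon--Tits entirely. After invoking Theorem~\ref{thm:tpl<>flag} as you do, it simply exploits the fact that the two models of $F_\dotp^X$ have complementary dimension constraints. In the flag model $T_\dotp^n(X)/T_\dotp^{n-1}(X)$, every nondegenerate simplex has dimension at most $n$ (a chain of subspaces longer than $n+1$ must repeat), so $\tilde H_i = 0$ for $i>n$. In the tuple model $\Tpl_\dotp^n(X)/\Tpl_\dotp^{n-1}(X)$, every $i$-simplex with $i<n$ already lies in $\Tpl_\dotp^{n-1}(X)$, since $i+1$ points span a subspace of dimension at most $i<n$; hence the quotient has no cells below dimension $n$ and $\tilde H_i = 0$ for $i<n$. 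No long exact sequence, no building theory.

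Your route through the long exact sequence and Proposition~\ref{prop:tup->t} is correct up to the last step, but the Solomon--Tits invocation is not clean even in the spherical case. The simplicial set $T_\dotp^{n-1}(X)$ is the nerve of the poset of proper nonempty \emph{nondegenerate} subspaces of $(E,q)$ (with the signature condition), whereas the Tits building of an orthogonal group is the poset of \emph{totally isotropic} subspaces; these are different complexes, and the reference \cite[Section~2]{quillen_rank} treats the $GL$ case where one uses all linear subspaces. The nondegenerate-subspace poset is indeed spherical of the correct dimension, but establishing that requires a separate argument (of the Quillen-fiber or Vogtmann type you allude to for the hyperbolic case), not a direct citation. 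So the obstacle you flag at the end is already present in the spherical case, and your proof as written is incomplete there too. The paper's two-model dimension argument sidesteps all of this.
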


\begin{proof}
  By definition $F_\dotp^X = T^{\dim X}_\dotp(X)/ T^{\dim X-1}_\dotp(X)$.  As
  all simplices of $T^{\dim X}_\dotp(X)$ above $\dim X$ are degenerate (since
  they must repeat at least one subspace) it must be the case that $\tilde
  H_i(F_\dotp^X) = 0$ for $i > \dim X$.  By Theorem~\ref{thm:tpl<>flag},
  $F_\dotp^X \simeq \Tpl^{\dim X}_\dotp(X)/\Tpl^{\dim X-1}_\dotp(X)$.  However,
  all simplices of $\Tpl^{\dim X}_\dotp(X)$ of dimension less than $\dim X$ are
  contained in $\Tpl_\dotp^{\dim X-1}(X)$, since the span of $i$ points has
  dimension at most $i-1$.  Thus $\tilde H_i(F_\dotp^X) = 0$ for $i < \dim X$.  
\end{proof}

Using these results we can finally prove Theorem~\ref{thm:f<>p}:
\begin{proof}[Proof of Theorem~\ref{thm:f<>p}]
  Theorems~\ref{thm:toSC} and \ref{thm:tospherSC}, together with
  (\ref{eq:SCtoH0}) demonstrate that scissors congruence groups are group
  homology with coefficients in $H_n(F_\dotp^X)^\tw$; this is exactly
  $H_{n+1}(S^\tw \smash  F_\dotp^X)$.  By the homotopy orbit spectral sequence
  (Proposition~\ref{prop:hoss}) this is $H_{n+1}((S^\tw \smash F_\dotp^X)_{hI(X)})$, as
  desired.  The formula for the class represented by the vertices of a simplex
  follows from Theorem~\ref{thm:tpl<>flag} and the fact that on homology the
  inverse to the map $\Sd\Tpl_\dotp^{m-1}(X) \rwe \Tpl_\dotp^{m-1}(X)$ is
  given by the formula
  \[[x_0,\ldots,x_n] \rgoesto \sum_{\sigma\in \Aut\{1,\ldots,n\}} \sgn(\sigma)
    \big[(x_{\sigma(0)})' \subseteq (x_{\sigma(0)},x_{\sigma(1)})' \subsets
    (x_{\sigma(0)},\ldots,x_{\sigma(n)})'\big].\]
  Here, $(x_{\sigma(0)},\ldots,x_{\sigma(i)})'$ is ordered \textsl{not} by the
  ordering $0,\ldots,i$ but rather by the ordering induced on the $x$'s from the
  tuple $(x_0,\ldots,x_n)$.
\end{proof}

We wrap up this section by proving our claim in Theorem~\ref{thm:RDehneq}, that
the derived definition of the Dehn invariant is compatible with the classical
Dehn invariant.  As we saw in Theorem~\ref{thm:f<>p}, in order to translate
between classical scissors congruence groups and RT-buildings we must take
``semi-coinvariants,'' and thus the twist by $S^\tw$
(Definition~\ref{def:Ssigma}) appears here as well. 

\begin{proof}[Proof of Theorem~\ref{thm:RDehneq}]
  Rewriting $\hat D_i$ using Lemma~\ref{lem:componentDehn}, we see that it
  suffices to construct an $I(X)$-equivariant diagram relating
  $\bigoplus_U \hat D_U$ to $H_{n+1}(S^\tw \smash D_i)$.
  
  For a geometry $W$ of dimension $i$, write
  \[R_\dotp(W) \defeq \Sd \Tpl_\dotp^i(W)/\Sd \Tpl_\dotp^{i-1}(W)\] for the
  quotient of barycentric subdivisions $\Sd$. To define
  $D^R_i: R_\dotp(X) \rto \bigvee_U R_\dotp(U) \redjoin R_\dotp(U^\perp)$
  consider a $j$-simplex of $R_\dotp(W)$: this is represented by a sequence
  $T_0 \subsets T_j$ of tuples of points in $W$ such that the span of $T_j$ is
  $W$.  If there exists a maximal $\ell$ such that
  $\dim \operatorname{span} T_\ell = i$, we map this $j$-simplex to the simplex
  $(T_0 \subsets T_\ell)\smash (\pr_{U^\perp}T_{\ell+1} \subsets \pr_{U^\perp}
  T_j)$, indexed by $\operatorname{span} T_\ell$.  Otherwise, we map to the
  basepoint.  This is a well-defined simplicial map for the same reason that
  $D_i$ is.

  Consider the following diagram:
  \begin{diagram}[7em]
    { H_{n+1} (S^\tw \smash F_\dotp^X) & H_{n+1}\bigg(S^\tw \smash
      \bigvee_{\substack{U \subseteq X
          \\ \dim U = i}} F_\dotp^U \redjoin F_\dotp^{U^\perp}\bigg) \\
      H_{n+1}(S^\tw \smash R_\dotp(X)) &
      H_{n+1}\bigg( S^\tw \smash \bigvee_{\substack{U \subseteq X \\ \dim U =
          i}} R_\dotp (U) \redjoin R_\dotp(U^\perp)\bigg)
      \\
      \P(X,1) & \bigoplus_{\substack{U \subseteq X \\ \dim U = i}} \P(U,1)
      \otimes \P(S^{n-i-1},1) \\
    };
    \to{1-1}{1-2}^{H_{n+1}(S^\tw \smash D_i)}
    \we{2-1}{1-1}_h \we{2-2}{1-2}_h
    \to{2-1}{2-2}^{H_{n+1}(S^\tw \smash D^R_i)}
    \to{3-1}{3-2}^{D_\Q}
    \we{2-1}{3-1}_p \we{2-2}{3-2}_p
  \end{diagram}
  Here, the vertical maps $h$ are induced by the map $h$ in
  Theorem~\ref{thm:tpl<>flag} (and are thus isomorphisms).  The vertical maps
  $p$ are defined as in Theorem~\ref{thm:f<>p} (with $G$ trivial) and are
  therefore isomorphisms.  Since all maps in this diagram are
  $I(X)$-equivariant, the lemma follows.
\end{proof}

\section{Technical miscellany}
\label{app:hocofib}

\subsection{Reduced joins} \label{sec:redjoin}

In this section we restate the definition of a reduced join and prove several
important properties.

\begin{definition}
  We define $X \redjoin Y$ to be the simplicial set with
  \[(X \redjoin Y)_n = \bigvee_{i=0}^{n-1} X_i \smash Y_{n-i-1}.\]
  On a simplex $(x,y)\in X_i \smash Y_{n-i-1}$, the map $d_j$ is defined to be
  $d_j\smash 1:X_i \smash Y_{n-i-1}\rto X_{i-1} \smash Y_{n-i-1}$ if $j \leq i$
  and $1 \smash d_{j-i-1}:X_i \smash Y_{n-i-1} \rto X_i \smash Y_{n-i-2}$ if $j
  \geq i+1$.  The degeneracies are defined similarly.  
\end{definition}

\begin{lemma} \label{lem:joindist}
  Reduced joins distribute over wedge products.
\end{lemma}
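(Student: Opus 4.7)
The plan is to reduce the statement to the standard distributivity of smash products over wedge products of pointed simplicial sets, applied level-wise via the explicit formula for the reduced join.

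First I would recall (or quickly verify) that for pointed simplicial sets $A$, $B$, $C$ there are natural isomorphisms
\[
A \smash (B \vee C) \;\cong\; (A \smash B) \vee (A \smash C),
\qquad
(A \vee B) \smash C \;\cong\; (A \smash C) \vee (B \smash C),
\]
obtained level-wise from the corresponding statement for pointed sets (since $\vee$ and $\smash$ are both computed degreewise on simplicial sets). This is the only nontrivial ingredient, and it holds because identifying the basepoints in $A$ and in $B\vee C$ separates cleanly along the two wedge summands.

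Next I would simply unwind the definition of the reduced join. For the right variable,
\[
(X \redjoin (Y \vee Z))_n \;=\; \bigvee_{i=0}^{n-1} X_i \smash (Y \vee Z)_{n-i-1} \;=\; \bigvee_{i=0}^{n-1} X_i \smash (Y_{n-i-1} \vee Z_{n-i-1}),
\]
and applying distributivity of $\smash$ over $\vee$ level-wise yields
\[
\bigvee_{i=0}^{n-1} \bigl((X_i \smash Y_{n-i-1}) \vee (X_i \smash Z_{n-i-1})\bigr) \;\cong\; (X\redjoin Y)_n \vee (X\redjoin Z)_n.
\]
An identical computation (using the other form of smash-distributivity) handles $(X \vee X') \redjoin Y$, so we get distributivity on both sides despite the asymmetry of $\redjoin$.

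The only remaining thing to check is that these level-wise isomorphisms are compatible with face and degeneracy maps. This is essentially automatic: the face maps of $X \redjoin W$ are built out of the face maps of $X$ and of $W$ together with the smash structure, and both of these operations are natural in $W$, so they commute with the splitting $W = Y \vee Z$. The potentially fiddly case is the face map $d_{i+1}$ (or $d_0$) on $X_i \smash W_{n-i-1}$ when the resulting simplex drops to the basepoint: here one just notes that the basepoints of $Y\vee Z$, $Y$, and $Z$ are all identified under the splitting, so the two definitions of ``send to basepoint'' agree. No step should present a serious obstacle; the whole lemma is essentially a bookkeeping argument leveraging the degreewise nature of $\smash$ and $\vee$.
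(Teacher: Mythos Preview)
Your proposal is correct and follows essentially the same approach as the paper: unwind the definition of $\redjoin$ levelwise, apply distributivity of $\smash$ over $\vee$ on each level, and note compatibility with the simplicial structure maps. The only minor difference is that the paper states and proves the result for arbitrary wedges $\bigvee_{\alpha\in A} X_\alpha$ (which is the form actually used later, e.g.\ in the definition of $D_i$), whereas you treat the binary case; but your argument adapts verbatim to arbitrary index sets since $\smash$ distributes over arbitrary wedges of pointed sets.
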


\begin{proof}
  We have
  \[\left(\bigvee_{\alpha\in A} (X_\alpha \redjoin Y)\right)_n =
    \bigvee_{\alpha\in A} \bigvee_{i+j=n-1} (X_\alpha)_i \smash Y_j =
    \bigvee_{i+j=n-1} \left(\bigvee_{\alpha\in A} X_\alpha\right)_i \smash Y_j =
    \left(\left(\bigvee_{\alpha\in A} X_\alpha\right)\redjoin Y\right)_n.\]
  Since each step of this expression commutes with simplicial maps, the two are
  isomorphic as simplicial sets.  
\end{proof}

\begin{lemma} \label{lem:joincommute}
  Let $f:X \rto Y$ be a quotient of simplicial sets.  Then the map
  $f\redjoin 1:X \redjoin Z \rto Y \redjoin Z$ is also a quotient of simplicial
  sets.  $(f\redjoin 1)^{-1}(*) = f^{-1}(*) \redjoin Z$.
\end{lemma}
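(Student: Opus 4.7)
The plan is to verify the statement degreewise, reducing everything to elementary facts about quotients of pointed sets, and then check simplicial compatibility.

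First I would unpack the definition. Since the reduced join is built levelwise, in degree $n$ the map $(f \redjoin 1)_n$ is, by definition, the wedge
\[
\bigvee_{i+j = n-1} f_i \smash 1_{Z_j} : \bigvee_{i+j=n-1} X_i \smash Z_j \rto \bigvee_{i+j=n-1} Y_i \smash Z_j.
\]
So the problem reduces to two claims about pointed sets: first, that smashing a quotient $f_i : X_i \rfib Y_i$ with an arbitrary pointed set $Z_j$ yields a quotient $f_i \smash 1 : X_i \smash Z_j \rfib Y_i \smash Z_j$ whose basepoint-preimage is $f_i^{-1}(*) \smash Z_j$; and second, that a wedge of quotients of pointed sets is again a quotient whose basepoint-preimage is the wedge of the basepoint-preimages.

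For the first claim, a non-basepoint element of $X_i \smash Z_j$ is a pair $(x,z)$ with $x \neq *$ and $z \neq *$, and this maps to the basepoint of $Y_i \smash Z_j$ precisely when $f_i(x) = *$; this shows $(f_i \smash 1)^{-1}(*) = f_i^{-1}(*) \smash Z_j$, and surjectivity of $f_i$ immediately gives surjectivity of $f_i \smash 1$. The second claim is clear because a wedge $\bigvee_\alpha A_\alpha \rto \bigvee_\alpha B_\alpha$ of surjections is surjective, and the basepoint-preimage is the wedge of the individual preimages (the basepoints are all identified). Combining these and using Lemma~\ref{lem:joindist} to identify $\bigvee_{i+j=n-1} f_i^{-1}(*) \smash Z_j$ with $(f^{-1}(*) \redjoin Z)_n$ gives the desired computation in degree $n$.

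Finally, I would note that $f^{-1}(*) \subseteq X$ is a simplicial subset because it is the preimage of the basepoint under a map of simplicial sets, so $f^{-1}(*) \redjoin Z$ is a simplicial subset of $X \redjoin Z$. The degreewise identification of basepoint-preimages is automatically compatible with the face and degeneracy maps of the reduced join, since those are defined using the face and degeneracy maps of $X$ and $Z$ and do not mix the wedge summands. There is no real obstacle here; the only thing to be careful about is keeping track of which pieces of $X_i \smash Z_j$ have already been collapsed to the basepoint inside $X \redjoin Z$, which the smash-product analysis above handles cleanly.
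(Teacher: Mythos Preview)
Your approach is essentially the same as the paper's---both work degreewise and identify the preimage of the basepoint---though the paper is more direct: it simply checks that every non-basepoint $n$-simplex $(y_i,z_j)$ of $Y \redjoin Z$ has a unique preimage $(f_i^{-1}(y_i),z_j)$, which is immediate since $f_i$ is a quotient.

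There is one small gap in your write-up worth fixing. To show that $f_i \smash 1$ (and hence $f \redjoin 1$) is a \emph{quotient}, surjectivity together with identification of the basepoint-preimage is not enough: you also need that every non-basepoint element of the target has a \emph{unique} preimage. (Otherwise a map like $\{*,a,b\} \to \{*,c\}$ sending both $a$ and $b$ to $c$ would count as a quotient.) This is of course immediate here---if $(y,z)$ is non-basepoint then $y \neq *$ and $z \neq *$, so $f_i^{-1}(y)$ is a singleton since $f_i$ is a quotient---but it should be stated, as it is exactly the content the paper isolates.
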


\begin{proof}
  It suffices to show that every nonbasepoint simplex in the codomain has a
  unique preimage in the domain.  Consider a non-basepoint $n$-simplex in
  $Y \redjoin Z$; this is a pair of the form $(y_i,z_j)$ with $y_i\in Y_i$,
  $z_j\in Z_j$ and $i+j=n-1$.  As $y_i \in Y_i$ is non-basepoint, it has a
  unique preimage $x_i\in X_i$.  As the given map takes $(x,z)$ to $(f(x),z)$
  the preimage of $(y_i,z_j)$ is exactly $(f^{-1}(y_i),z_j)$, which is unique.

  The simplices that map to the basepoint are exactly those that $f$ maps to the
  basepoint, with anything in the $Z$-coordinate.
\end{proof}

We end by giving a map relating the smash product and the reduced join.

\begin{lemma} \label{lem:smashToJoin} Let $X$ and $Y$ be pointed simplicial
  sets.  The map $f:S^1 \smash X \smash Y \rto X \redjoin Y$ given by sending
  $(i,x,y)\in (S^1\smash X \smash Y)_n$ to $(d_i^{n-i+1}x, d_0^{i+1}y)$ is a
  simplicial weak equivalence.
\end{lemma}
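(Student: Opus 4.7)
My plan is to split the proof into two parts: first verifying that $f$ is a well-defined simplicial map, and then establishing that it is a weak equivalence.

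For the simplicial map check, I would compute directly that $f$ commutes with face and degeneracy operators. Recall that $d_j(i) = i$ when $j \geq i$ and $d_j(i) = i-1$ when $j < i$ on the $S^1$-coordinate (with basepoint handling at the extremes $d_0(1) = *$ and $d_n(n) = *$). The iterated face $d_i^{n-i+1}x$ can be thought of as repeatedly deleting the ``last'' vertices, extracting an initial portion of the simplex, while $d_0^{i+1}y$ extracts a terminal portion. The verification then proceeds by case analysis: when $j < i$ we use the simplicial identity $d_a d_b = d_{b-1} d_a$ for $a < b$ to move $d_j$ through the $d_i^{n-i+1}$ block, reducing the $X$-exponent by one; when $j \geq i$ the operator $d_j$ slides to become $d_{j-i}$ applied to the $Y$-coordinate. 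The degeneracy case is analogous, and the special cases where indices collapse to $0$ or $n$ force both sides to the basepoint.

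For the weak equivalence, I would use a cellular induction argument to reduce to the case of spheres. Both functors $(X,Y) \mapsto S^1 \smash X \smash Y$ and $(X,Y) \mapsto X \redjoin Y$ are bifunctorial and preserve pushouts along cofibrations in each variable; for the reduced join side this uses Lemma~\ref{lem:joindist} (distribution over wedges) together with Lemma~\ref{lem:joincommute} (compatibility with quotients). Since every pointed simplicial set is built by iteratively attaching cells $\partial \Delta^p \hookrightarrow \Delta^p$, and $f$ is natural in both variables, the gluing lemma reduces the problem to verifying that $f$ is a weak equivalence when $X = S^p = \Delta^p / \partial \Delta^p$ and $Y = S^q = \Delta^q/\partial \Delta^q$. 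In that case $S^1 \smash S^p \smash S^q$ is manifestly $S^{p+q+1}$, and inspection of the reduced join shows that $(S^p \redjoin S^q)_n = \bigvee_{i+j=n-1} (S^p)_i \smash (S^q)_j$ has a unique non-basepoint non-degenerate simplex, coming from the summand $(S^p)_p \smash (S^q)_q$ in dimension $p+q+1$, so $S^p \redjoin S^q$ is also isomorphic to $S^{p+q+1}$. Evaluating $f$ on the fundamental simplex shows it is a degree-one map, hence a weak equivalence.

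The main obstacle I expect is verifying the cofibration-preservation hypothesis needed for the gluing lemma to apply cleanly on the reduced-join side. The compatibility with quotients from Lemma~\ref{lem:joincommute} is the key input, but one must carefully check that the preimage description there combines correctly with the wedge-distributivity of Lemma~\ref{lem:joindist} to produce genuine cofiber sequences on the $\redjoin$ side matching those on the smash side. Once this is in place, the five-lemma applied to the induced long exact sequences in homotopy (both sides being simply connected, so weak equivalence is detected on homotopy groups of the basepoint component) completes the induction.
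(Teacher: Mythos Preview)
Your approach is sound and genuinely different from the paper's. The paper does not do a cellular induction at all: instead it places $f$ in a commutative square
\[
\begin{tikzcd}
X \mathrel{*_w} Y \ar[r,"g"] \ar[d,"f'"'] & S^1 \smash X \smash Y \ar[d,"f"] \\
X * Y \ar[r,"g'"'] & X \redjoin Y
\end{tikzcd}
\]
where $X \mathrel{*_w} Y$ is the double mapping cylinder of $X \leftarrow X\times Y \to Y$ and $X*Y$ is the ordinary (unreduced) join. The maps $g$ and $g'$ are quotients by contractible subspaces, hence weak equivalences, and $f'$ is a weak equivalence by a result of Fritsch--Golasi\'nski; then $f$ is a weak equivalence by two-out-of-three. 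This is a three-line argument that outsources all the work to the classical join literature. Your approach trades that citation for an explicit cell-by-cell verification, which is longer but entirely self-contained and makes the base case (spheres) completely transparent.

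One small repair: your closing remark that ``both sides are simply connected'' is not true in general (take $X = Y = S^0$, where $S^1 \smash S^0 \smash S^0 \cong S^1$), and in any case the five-lemma on homotopy long exact sequences is not the right tool here because of the usual trouble at $\pi_1$. What you actually want is the gluing lemma for cofibrations: if $f$ is a natural transformation between functors that send pushout squares along cofibrations to homotopy pushout squares, and $f$ is a weak equivalence on the three input corners, then it is a weak equivalence on the pushout. That is exactly what your Lemmas~\ref{lem:joindist} and~\ref{lem:joincommute} set you up to verify on the $\redjoin$ side, and it requires no connectivity hypothesis. You should also remark that both functors commute with sequential colimits along cofibrations, to handle simplicial sets with infinitely many cells.
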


\begin{proof}
  The fact that $f$ is well-defined is direct from the definition.
  We define $X\mathrel{*_w} Y$ to be the double mapping cylinder of the diagram
  \[X \lto^{\pr_X} X\times Y \rto^{\pr_Y} Y.\] We can thus think of
  $X \mathrel{*_w} Y$ as the quotient of $I\times X \times Y$ given by the
  mapping cylinder relations $(x,0,y) \sim (x',0,y)$ and
  $(x,1,y) \sim (x, 1, y')$ for all $x,x'\in X$ and $y,y'\in Y$.  Consider the
  following commutative square:
  \begin{diagram}
    { X \mathrel{*_w} Y & S^1 \smash X \smash Y \\
      X * Y & X \redjoin Y \\};
    \arrowsquare{g}{f'}{f}{g'}
  \end{diagram}
  The maps $g$ and $g'$ are both weak equivalences because they are quotients by
  contractible subspaces.  The map $f'$ is a weak equivalence by \cite[Corollary
  3.4]{fritschgolasinski}.  Thus, by 2-of-3, $f$ is a weak equivalence, as desired.  
\end{proof}

\subsection{Homotopy coinvariants}

All of the results in this section are well-known to experts, although we could
not find references for them for the specific cases we were interested in.

\begin{definition}[{\cite[Example IV.1.10]{goerssjardine}}]
  Let $X$ be a (pointed) simplicial set with an action by a discrete group $G$.
  The \emph{homotopy coinvariants} (or \emph{homotopy orbits}) of $G$ acting on
  $X$, denoted $X_{hG}$, is the diagonal of the bisimplicial set with
  $(m,n)$-simplices given by diagrams
  \[x \to^{g_1} g_1x \to^{g_2} g_2g_1x \to \cdots \to^{g_n} g_n\cdots g_1x\] for
  $x\in X_m$.
\end{definition}

Directly from the definition we see that $*_{hG} \cong *$ and
$S^0_{hG} \cong BG_+$.

\begin{remark}
  This agrees with the more standard definition of homotopy coinvariants,
  defined as
  \[X_{hG} \defeq EG_+\smash_G X.\]
  (In the unpointed context, $\smash$ is replaced by $\times$.)
\end{remark}

There is a spectral sequence for computing the homology of the homotopy orbits
from the group homology of $G$ with coefficients in the homology of $X$:

\begin{proposition} \label{prop:hoss} There is a spectral sequence
  \[H_p(G, \tilde H_q(X)) \Rto \tilde H_{p+q}(X_{hG}).\]
\end{proposition}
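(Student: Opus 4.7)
The plan is to realize this as the standard spectral sequence associated to the bisimplicial structure already present in the definition of $X_{hG}$. Write $Y_{\bullet,\bullet}$ for the bisimplicial pointed set whose $(m,n)$-cells are the diagrams in the definition of $X_{hG}$, so that $X_{hG} = \mathrm{diag}(Y_{\bullet,\bullet})$. The reduced chains $C_{\bullet,\bullet} \defeq \tilde C_*(Y_{\bullet,\bullet})$ form a first-quadrant double complex, and the pointed Eilenberg--Zilber theorem for bisimplicial sets gives a natural chain homotopy equivalence $\tilde C_*(X_{hG}) \simeq \mathrm{Tot}(C_{\bullet,\bullet})$.

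I would then run the spectral sequence of $\mathrm{Tot}(C_{\bullet,\bullet})$ obtained by filtering by the $G$-direction. For fixed $p$, an $(m,p)$-cell of $Y$ is a pair $(x, (g_1, \ldots, g_p))$ with $x \in X_m$, and the simplicial structure in the $m$-direction is just the simplicial structure of $X$; thus $Y_{\bullet,p}$ is the pointed simplicial set $X \wedge (G^p)_+$, and taking vertical homology yields
\[E^1_{p,q} \cong \tilde H_q(X) \otimes_{\Z} \Z[G]^{\otimes p}.\]
Reading off the horizontal face maps from the definition of $Y_{\bullet,\bullet}$ (the $0$-th face applies $g_1$ to $x$ and drops $g_1$; the middle faces multiply adjacent $g_i$'s; the last face drops the rightmost $g_p$), the induced horizontal differential on $E^1$ is precisely the bar-complex differential for group homology with coefficients in $\tilde H_q(X)$, where the $G$-module structure on $\tilde H_q(X)$ is the one induced from the $G$-action on $X$. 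Taking horizontal homology then yields
\[E^2_{p,q} = H_p(G; \tilde H_q(X)),\]
and strong convergence to $\tilde H_{p+q}(X_{hG})$ is automatic since the double complex is first-quadrant.

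The only non-formal input is the identification of the horizontal $E^1$-differential with the bar differential and of the induced coefficient module with the expected $G$-action on $\tilde H_q(X)$; both fall out directly from unwinding the face maps in the definition of homotopy coinvariants, and in particular the face $d_0$ in the $G$-direction is what produces the left action of $G$ on $\tilde H_q(X)$. The main technical care, rather than a conceptual obstacle, is the pointed bookkeeping: one must treat $Y_{\bullet,\bullet}$ as a bisimplicial pointed set so that the basepoint at every bidegree is correctly collapsed in the reduced chains. This matches the authors' convention (set out in the Notation section) that $\bullet_{hG}$ is formed in a correctly-pointed manner so that $*_{hG} \simeq *$, and it is exactly what one needs in order for the $E^2$-page to be computed by the normalized bar resolution rather than an unreduced variant.
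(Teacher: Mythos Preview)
Your proof is correct and follows the same underlying idea as the paper: both obtain the spectral sequence from the bisimplicial model of $X_{hG}$ by filtering in the $G$-direction. The only real difference is bookkeeping for the basepoint: you work directly in the pointed category, using reduced chains of the bisimplicial pointed set and identifying $Y_{\bullet,p}\cong X\wedge(G^p)_+$ so that the $E^1$-page is already reduced; the paper instead runs the standard unreduced bisimplicial spectral sequence for both $\bar X_{hG}$ and $*_{hG}$, observes that the latter is a retract of the former, and obtains the reduced statement by taking the complementary summand. Your route is slightly more direct but requires invoking a pointed Eilenberg--Zilber statement and tracking the pointed structure carefully; the paper's route is marginally more elementary since it only needs the classical unreduced spectral sequence plus an obvious retraction.
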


The proposition holds for all simplicial sets with $G$-action, which is the case
of concern in this paper.

\begin{proof}
  Consider $X$ as an unpointed simplicial set; write this space $\bar X$.  The
  homology of the diagonal simplicial set of a bisimplicial set is the homology
  of the total complex of the associated simplicial abelian group.  The spectral
  sequence associated to a simplicial abelian group $A_{\dotp\dotp}$ has
  \[E^2_{p,q} = H^{\mathrm{vert}}_q H^{\mathrm{horiz}}_p (A_{\dotp\dotp}) \Rto
    H_{p+q}(\diag A_{\dotp\dotp}).\] Applying this in the current case to both
  $\bar X_{hG}$ and $*_{hG}$ gives us the following pair of spectral sequences:
  \[H_p(G, H_q(X)) \Rto H_{p+q}(\bar X_{hG}) \qqand H_p(G, H_q(*)) \Rto
    H_{p+q}(BG).\]
  The second is a retract of the first; if we take the other summand, we get a
  spectral sequence
  \[H_p(G, \tilde H_q(X)) \Rto \tilde H_{p+q}(X_{hG}),\]
  as desired.
\end{proof}

% There is a particular model of homotopy coinvariants that is particularly useful
% for us.  Suppose that $X$ is a pointed simplicial set with a $G$-action.  Let
% $\C^X_n$ be the category whose objects are the $n$-simplices of $X$, and whose
% morphisms are given by the action of $G$ (away from the
% basepoint).  Then $\C^X_\dotp$ is a simplicial category.

% \begin{proposition}\label{prop:hocoinv}
%   \[X_{hG} \simeq |\C^X_\dotp|.\]
% \end{proposition}

% \begin{proof}
%   This follows directly from the definition of $X_{hG}$ and the model of
%   $\C^X_\dotp$ as the diagonal of a bisimplicial set which is $X_{hG}$ in one
%   direction and $EG$ in the other.
% \end{proof}

Lastly we present a technical proposition relating certain kinds of homotopy orbits.

\begin{proposition} \label{prop:restrict_coinv}
  Let $G$ be a group acting on a pointed simplicial set $X_\dotp$.  Suppose that
  $Y_\dotp$ is a subspace of $X_\dotp$ such that the following two conditions
  hold: 
  \begin{enumerate}
  \item If $g\in G$ is such that there exists a (non-basepoint) simplex
    $y\in Y_\dotp$ such that $g\cdot y\in Y_\dotp$ then for all $y'\in Y_\dotp$,
    $g\cdot y'\in Y_\dotp$.
  \item For all $n$ and for all $x\in X_n$ there exists $g\in G$ such that $g
    \cdot x \in Y_n$.
  \end{enumerate}
  Let $H$ be the subgroup of $G$ that takes $Y_\dotp$ to $Y_\dotp$.  Then
  \[(X_\dotp)_{hG} \simeq (Y_\dotp)_{hH}.\]
\end{proposition}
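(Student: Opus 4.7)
The plan is to model both $(X_\dotp)_{hG}$ and $(Y_\dotp)_{hH}$ as diagonals of bisimplicial sets whose vertical slices are (pointed) nerves of action groupoids, mirroring the setup used in the proof of Lemma~\ref{lem:homologyComp}. Specifically, for any pointed $G$-action on a simplicial set $W_\dotp$, the pointed homotopy coinvariants can be realized as the diagonal of the bisimplicial set $K^W_{p,q} = (G^q)_+ \sma W_p$, and for each fixed simplicial dimension $p$ the slice $K^W_{p,\dotp}$ is (up to a disjoint basepoint) the nerve of the translation groupoid $G \ltimes W_p$---the category whose objects are the simplices of $W_p$ and whose morphism set from $w$ to $w'$ is $\{g \in G \mid g\cdot w = w'\}$.

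With this model in hand, the inclusions $H \hookrightarrow G$ and $Y_\dotp \hookrightarrow X_\dotp$ induce a functor $H \ltimes Y_p \to G \ltimes X_p$ for every $p$. The main claim is that this functor is an equivalence of categories on the non-basepoint part. Essential surjectivity is immediate from condition (2): every $x \in X_p$ can be sent by some $g \in G$ to an element of $Y_p$, and so is isomorphic in $G \ltimes X_p$ to an object in the image. Full faithfulness on non-basepoint simplices uses condition (1): given non-basepoint $y, y' \in Y_p$ and $g \in G$ with $g \cdot y = y'$, the element $g$ sends the non-basepoint simplex $y$ into $Y_\dotp$, so by hypothesis $g$ preserves all of $Y_\dotp$ and hence lies in $H$.

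Once the functor is an equivalence on non-basepoint parts, it induces a weak equivalence of pointed nerves $K^Y_{p,\dotp} \to K^X_{p,\dotp}$ for every $p$. Since a levelwise weak equivalence of bisimplicial sets induces a weak equivalence on diagonals, this will yield $(Y_\dotp)_{hH} \simeq (X_\dotp)_{hG}$, as required.

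The step I expect to be delicate is the bookkeeping at the basepoint. The hom-sets with source or target at the basepoint in $G \ltimes X_p$ and $H \ltimes Y_p$ consist of \emph{all} of $G$ (respectively $H$) fixing the basepoint, so the comparison functor is not literally fully faithful there---this is precisely why condition (1) is stated only for non-basepoint simplices. The resolution is that the pointed convention $\ast_{hG} \simeq \ast$ effectively collapses the basepoint strand in each slice, so only the non-basepoint portions of the categories contribute to the diagonal of the bisimplicial set, and the equivalence on non-basepoint simplices observed above is exactly what is needed.
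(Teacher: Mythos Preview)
Your proposal is correct and follows essentially the same route as the paper: model both sides as diagonals of bisimplicial sets, identify each vertical slice as the (pointed) nerve of an action groupoid, show the inclusion is an equivalence of categories using condition~(2) for essential surjectivity and condition~(1) for fullness, and conclude via the levelwise criterion for diagonals. Your explicit discussion of the basepoint is a welcome bit of care---the paper's proof glosses over this, relying on the convention (spelled out in the proof of Lemma~\ref{lem:homologyComp}) that the slice is the nerve of the groupoid on \emph{non-basepoint} simplices with a disjoint basepoint adjoined, so that the failure of fullness at the basepoint is harmless.
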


\begin{proof}
  Let $Z_{\dotp\dotp}$ be the bisimplicial set whose $(n,m)$-simplex consist of
  diagrams
  \[x_0 \rto^{g_1} x_1 \rto^{g_2} \cdots \rto^{g_m} x_m,\] where the
  $x_i\in X_n$ for $i = 0,\ldots,m$ and $g_i \cdot x_{i-1} = x_i$.  Then
  $\diag Z_{\dotp\dotp} = (X_\dotp)_{hG}$.  In addition, if we let
  $W_{\dotp\dotp}$ be the sub-bisimplicial set containing those diagrams where
  the $x_i\in Y_\dotp$ and the $g_i\in H$ then $\diag W_{\dotp\dotp} = Y_{hH}$.
  Thus it suffices to check that the inclusion
  $W_{\dotp\dotp} \rto Z_{\dotp\dotp}$ induces an equivalence on diagonals.  To
  prove this, it suffices (by \cite[Proposition IV.1.9]{goerssjardine}) to show
  that for all $n$, $W_{n\dotp} \rto Z_{n\dotp}$ is a weak equivalence of
  simplicial sets.

  $Z_{n\dotp}$ (resp. $W_{n\dotp}$) is the nerve of the category whose objects
  are $X_n$ (resp. $Y_n$) and whose morphisms are induced by the action of $G$
  (resp. $H$); call these categories $\C$ and $\D$.  $\D$ is clearly a
  subcategory of $\C$; thus to show that the map induces an equivalence on
  nerves it suffices to check that the inclusion is full and essentially
  surjective.  That it is full follows from condition (1), since since if we are
  given $y,y'\in Y_n$ then any $g$ such that $g \cdot y = y'$ is in $H$.  That
  it is essentially surjective follows from condition (2), since every element
  of $X_n$ is isomorphic via the action of $G$ to an element of $Y_n$.
\end{proof}

\subsection{The spectral sequence for the total homotopy cofiber of a cube}\label{subsec:thocofib}. 

The technical result that we need in order to understand the Dehn cube is the
spectral sequence for the total homotopy cofiber of a cube.  As the usual
spectral sequence is stated only for ordinary, rather than reduced, homology, we
state our analog here.  We use the notation introduced in Section~\ref{sec:goncharov}.

\begin{proposition} \label{prop:sstcofib}
  Let $F: \hat\I_n \rto \Top_*$ be a functor.  There is a spectral sequence
  \[\bigoplus_{\vec A = (b,a_1,\ldots,a_{n-p-1})} \tilde H_q(F(\vec A)) \Rto
    \tilde H_{p+q}(\tcofib F).\]
\end{proposition}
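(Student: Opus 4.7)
The plan is to construct a skeletal filtration of $\tcofib F$ indexed by the combinatorics of $\hat\I_n$ and extract the spectral sequence from its associated exact couple, following the standard template for spectral sequences of cubical diagrams (cf.\ \cite[Section 5.9, Proposition 9.6.14]{munsonvolic}), adapted to reduced homology in the pointed setting.

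First, I would set up an inductive framework using the iterative description of total homotopy cofibers. Splitting $\hat\I_n$ along a chosen coordinate direction yields two parallel $(n-1)$-subcubes with an induced natural transformation, and
\[\tcofib F \simeq \hocofib\bigl(\tcofib F^{(0)} \to \tcofib F^{(1)}\bigr).\]
The base case is the mapping cone of a single map $A \to B$ in $\Top_*$, which admits the two-step filtration $B \subseteq B \cup_A CA$ with subquotient $\Sigma A$. Iterating these decompositions assembles a filtration $\mathcal{F}_\bullet$ on $\tcofib F$ in which each vertex $F(\vec A)$ contributes in exactly one layer, suspended according to its combinatorial position within the cube.

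The key claim, proved by induction on $n$, is that the associated graded has the form
\[\mathcal{F}_p/\mathcal{F}_{p-1}\ \simeq\ \bigvee_{\mathrm{len}(\vec A) = n-p} \Sigma^p F(\vec A).\]
Here the vertices at a fixed cubical level correspond exactly to compositions $\vec A = (b, a_1, \ldots, a_{n-p-1})$ of the required length, and the suspension shift $\Sigma^p$ records the iterated cofibers traversed to reach the vertex. Applying reduced homology to this filtration gives an exact couple, and the identification $\tilde H_{p+q}(\Sigma^p F(\vec A)) = \tilde H_q(F(\vec A))$ delivers the stated $E^1$-page
\[E^1_{p,q}\ =\ \bigoplus_{\vec A} \tilde H_q(F(\vec A))\ \Rightarrow\ \tilde H_{p+q}(\tcofib F).\]

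The main obstacle is the careful combinatorial identification of the associated graded, and in particular tracking the suspension shifts so that the reduced homology produces precisely the wedge decomposition above; once this is in place, the rest is a formal exact-couple argument. An alternative route, which avoids the inductive bookkeeping, is to model $\tcofib F$ directly as the geometric realization of an appropriate cubical object and appeal to the usual skeletal spectral sequence, reducing the combinatorics to the identification of a single associated graded piece.
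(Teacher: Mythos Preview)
Your approach is correct but takes a different route from the paper. You rebuild the Munson--Volic spectral sequence from scratch in the pointed setting: construct a skeletal filtration on $\tcofib F$ by iterating the mapping-cone description along each cube direction, identify the associated graded as wedges of suspended vertices, and extract the exact couple. This works, and the bookkeeping you flag (tracking suspension shifts and matching the length indexing $\mathrm{len}(\vec A) = n-p$ to the filtration degree) is indeed the only real content.

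The paper sidesteps all of this. It cites the unreduced version \cite[Proposition 9.6.14]{munsonvolic} directly for the underlying unpointed functor, and then observes that the constant functor $C(\vec A) = *$ is a natural retract of $F$ (via the basepoint inclusion and the terminal map). Since $\tcofib C \simeq *$, the spectral sequence for $C$ splits off from the one for $F$, and the complementary summand is exactly the reduced spectral sequence. This retract trick is a two-line argument once Munson--Volic is in hand, whereas your approach reproves their result in the process. Your route is more self-contained and makes the filtration explicit (which could be useful if one needs to identify differentials or edge maps by hand); the paper's route is faster and isolates precisely what is new, namely the passage from unreduced to reduced homology.
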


\begin{proof}
  By \cite[Proposition 9.6.14]{munsonvolic}, for a functor $G: \hat\I_n \rto \Top$
  there is a spectral sequence 
  \[\bigoplus_{\vec A = (b,a_1,\ldots,a_{n-p-1})} H_q(G(\vec A)) \Rto
    H_{p+q}(\tcofib G).\]
  Each of the spaces we have is pointed, thus the functor $C: \hat\I_n \rto \Top$
  defined by $C(\vec A) = *$ is a retract of $G$.  In particular, this means
  that the spectral sequence given by the kernel of the induced map $G \Rto C$
  is also a spectral sequence, which converges to $\ker (H_{p+q}(\tcofib G) \rto
  H_{p+q}(\tcofib C))$.  Since $\tcofib C \simeq *$, this reduces to the desired
  spectral sequence.
\end{proof}

%%% NB: $\Zinv$ is flat over $\Z$ because it is torsion-free and $\Z$ is a
%%% Dedekind ring.  Thus the universal coefficient theorem says that we can just
%%% tensor up with it and everything is fine.

\bibliographystyle{amsalpha}
\bibliography{CZ}

\end{document}